\numberwithin{equation}{section}
\newtheorem{thm}{Theorem}[section]
\newtheorem*{thm*}{Theorem}
\newtheorem{lem}{Lemma}[section]
\newtheorem{cor}{Corollary}[section]
\newtheorem*{cor*}{Corollary}
\newtheorem{prop}{Proposition}[section]
\newtheorem{claim*}{Claim}
\newtheorem{rem}{Remark}[section]
\newtheorem{quest}{Question}
\newtheorem{Step}{Step}[section]
\DeclareMathOperator*{\esssup}{ess\,sup}
\begin{document}

\title[Extremal for the perturbed Moser-Trudinger inequalities]
{When does a perturbed Moser-Trudinger inequality admit an extremal?}

\author{Pierre-Damien Thizy}
\address[Pierre-Damien Thizy]{Universit\'e Claude Bernard Lyon 1, CNRS UMR 5208, Institut Camille Jordan, 43 blvd. du 11 novembre 1918, F-69622 Villeurbanne cedex, France}
\email{pierre-damien.thizy@univ-lyon1.fr}
\subjclass{35B33, 35B44, 35J15, 35J61}
\date{February 2018}

\begin{abstract}
In this paper, we are interested in several questions raised mainly in \cite{MartMan} (see also \cite{McLeodPeletier,Pruss}). We consider the perturbed Moser-Trudinger inequality $I_\alpha^g(\Omega)$ below, at the critical level $\alpha=4\pi$, where $g$, satisfying $g(t)\to 0$ as $t\to +\infty$, can be seen as a perturbation with respect to the original case $g\equiv 0$. Under some additional assumptions, ensuring basically that $g$ does not oscillates too fast as $t\to +\infty$, we identify a new condition on $g$ for this inequality to have an extremal. This condition covers the case $g\equiv 0$ studied in \cite{CarlesonChang,StruweCrit,Flucher}. We prove also that this condition is sharp in the sense that, if it is not satisfied, $I_{4\pi}^g(\Omega)$ may have no extremal.
\end{abstract}

\maketitle

\section{Introduction}\label{SectIntro}
Let $\Omega$ be a smooth, bounded domain of $\mathbb{R}^2$ and let $H^1_0=H^1_0(\Omega)$ be the standard Sobolev space, obtained as the completion of the set of smooth functions with compact support in $\Omega$, with respect to the norm $\|\cdot\|_{H^1_0}$ given by
$$\|u\|_{H^1_0}^2=\int_\Omega |\nabla u(x)|^2 dx\,. $$
Throughout the paper, $\Omega$ is assumed to be connected. Let $g$ be such that 
\begin{equation}\label{Propg}
\begin{split}
&g\in C^1(\mathbb{R})\,, \quad\lim_{s\to +\infty} g(s)=0\,,\quad  g(t)>-1\text{ and }g(t)=g(-t)\text{ for all }t\,.
\end{split}
\end{equation}
Then, we have that
 \begin{equation}\tag{$I_\alpha^g(\Omega)$}
C_{g,\alpha}(\Omega):=\sup_{u\in H^1_0:{\|u\|_{H^1_0}^2\le \alpha}} \int_{\Omega} (1+g(u))\exp(u^2) dx 
 \end{equation}
is finite for $0<\alpha\le 4\pi$ and equals $+\infty$ for $\alpha>4\pi$. This result was first obtained by Moser \cite{MoserIneq} in the unperturbed case $g\equiv 0$. Still by \cite{MoserIneq}, we easily extend the $g\equiv 0$ case to the case of $g$ as in \eqref{Propg}. At last, \cite{MoserIneq} gives also the existence of an extremal for $(I_\alpha^g(\Omega))$ if $0<\alpha<4\pi$ (see Lemma \ref{SubcriticalCase}). If now $\alpha=4\pi$, getting the existence of an extremal is more challenging; however Carleson-Chang \cite{CarlesonChang}, Struwe \cite{StruweCrit} and Flucher \cite{Flucher} were also able to prove that $(I_{4\pi}^0(\Omega))$ admits an extremal in the unperturbed case $g\equiv 0$. Yet, surprisingly, McLeod and Peletier \cite{McLeodPeletier} conjectured that there should exist a $g$ as in \eqref{Propg} such that $(I_{4\pi}^g(\Omega))$ does not admit any extremal function. Through a nice but very implicit procedure, Pruss \cite{Pruss} was able prove that such a $g$ does exist. Observe that, since $g(u)\to 0$ as $u\to +\infty$ in \eqref{Propg}, $(1+g(u))\exp(u^2)$ in $(I_\alpha^g(\Omega))$ sounds like a very mild perturbation of $\exp(u^2)$ as $u\to +\infty$ and then, this naturally raises the following question: 
\begin{quest}
To what extent does the existence of an extremal for the critical Moser-Trudinger inequality $(I_{4\pi}^0(\Omega))$ really depend on asymptotic properties of the function $t\mapsto\exp(t^2)$ as $t\to +\infty$ ? 
\end{quest}
 To investigate this question, we may rephrase it as follows: for what $g$ satisfying \eqref{Propg} does $(I_{4\pi}^g(\Omega))$ admit an extremal ? This is Open problem 2 in Mancini and Martinazzi \cite{MartMan}, stated in this paper for $\Omega=\mathbb{D}^2$, the unit disk of $\mathbb{R}^2$. In order to state our main general result, we introduce now some notations. For a first reading, one can go directly to Corollary \ref{CorD2}, which aims to give a less general but more readable statement. We let $H:(0,+\infty)\to \mathbb{R}$ be given by
\begin{equation}\label{DefH}
H(t)=1+g(t)+\frac{g'(t)}{2t}\,,
\end{equation}
so that we have
\begin{equation}\label{Croissance}
[(1+g(t))\exp(t^2)]'=2t H(t) \exp(t^2)\,.
\end{equation}
We set $tH(t)=0$ for $t=0$, so that $t\mapsto tH(t)$ is continuous at $0$ by \eqref{Propg}. This function $H$ comes into play, since the Euler-Lagrange associated to $(I^g_{\alpha}(\Omega))$ reads as 
\begin{equation}\label{ELEq1}
\begin{cases}
\Delta u=\lambda u H(u) \exp(u^2)\text{ in }\Omega\,,\\
u=0\text{ in }\partial\Omega\,,
\end{cases}
\end{equation}
where $\lambda\in\mathbb{R}$ is a Lagrange multiplier and $\Delta=-\partial_{xx}-\partial_{yy}$ (see also Lemma \ref{SubcriticalCase} below). Now, we make some further assumptions on the behavior of $g$ at $+\infty$ and at $0$. First, we assume that there exist $\delta_0\in (0,{1})$ and a sequence of real numbers $A=(A(\gamma))_{\gamma}$ such that
\begin{equation}\label{InftyBehavior}
\begin{split}
&\text{a) }{ H\left(\gamma-\frac{t}{\gamma}\right)}={H(\gamma)}\left(1+A(\gamma) t+o(|A(\gamma)|+\gamma^{-4})\right)\,,\\
& \text{ in } C^{{0}}_{loc}\left([0,+\infty)_t\right), \text{ as }  \gamma\to +\infty\,,\\
&\text{b) }\exists C>0\,, \left|H\left(\gamma-\frac{t}{\gamma}\right)-H(\gamma) \right|\le C|H(\gamma)|(|A(\gamma)|+\gamma^{-4}){\exp(\delta_0 t)}\,,\\
& \text{ for all }\gamma\gg 1 \text{ and  all }0\le t\le \gamma^2\,,\\
&\text{c) }\lim_{\gamma\to +\infty}  A(\gamma)= 0\,.
\end{split}
\end{equation}
We also assume that there exist $\delta_0'\in (0,{1})$, $\kappa\ge 0$, $\tilde{\varepsilon}_0\in\{-1,+1\}$, $F$ given by $F(t):=\tilde{\varepsilon}_0 t^{\kappa}$, and a sequence $B=(B(\gamma))_\gamma$ of positive real numbers such that
\begin{equation}\label{ZeroBehavior}
\begin{split}
&\text{a) }{\frac{t}{\gamma}H\left(\frac{t}{\gamma}\right)}=B(\gamma) F(t)+o(|B(\gamma)|+\gamma^{-1})\,,\\
&\text{ in }C^0_{loc}\left((0,+\infty)_t\right), \text{ as }  \gamma\to +\infty\,,\\
&\text{b) }\exists C>0\,, \left|\frac{t}{\gamma}H\left(\frac{t}{\gamma}\right)\right|\le C (|B(\gamma)|+\gamma^{-1}) {\exp(\delta_0' t)}\,,\\
&\text{ for all }\gamma\gg 1 \text{ and  all }0\le t\le {\gamma^2}\,.
\end{split}
\end{equation}
Observe that we may have $B(\gamma)=o(\gamma^{-1})$ as $\gamma \to +\infty$, in which case the precise formula for $F$ is not really significant. Since $t\mapsto (1+g(t)) \exp(t^2)$ is an even $C^1$ function, we have that
\begin{equation}\label{LimitB}
\lim_{\gamma\to +\infty}  B(\gamma)= 0\,,
\end{equation}
in view of \eqref{Croissance} and \eqref{ZeroBehavior}. Following rather standard notations, we may split the Green's function $G$ of $\Delta$, with zero Dirichlet boundary conditions in $\Omega$, according to 
\begin{equation}\label{GreenFun}
G_x(y)=\frac{1}{4\pi}\left(\log\frac{1}{|x-y|^2}+\mathcal{H}_x(y) \right)\,,
\end{equation}
for all $x\neq y$ in $\Omega$, where $\mathcal{H}_x$ is harmonic in $\Omega$ and coincides with $-\log\frac{1}{|x-\cdot|^2}$ in $\partial\Omega$. Then the Robin function $x\mapsto \mathcal{H}_x(x)$ is smooth in $\Omega$, and goes to $-\infty$ as $x\to \partial \Omega$, so that we may set
\begin{equation}\label{MaxRobFun}
\begin{split}
&M=\max_{x\in \Omega} \mathcal{H}_x(x)\,,\\
&K_\Omega=\{y\in\Omega\text{ s.t. }\mathcal{H}_y(y)=M\}\text{ and}\\
&S=\max_{z\in K_\Omega} \int_{\Omega} G_z(y) F(4\pi G_z(y)) dy\,,
\end{split}
\end{equation}
where $F$ is as in \eqref{ZeroBehavior}. For $N\ge 1$, we let $g_N$ be given by
\begin{equation}\label{gN}
(1+g_N(t))\exp(t^2)=(1+g(t))(1+t^2)+(1+g(t))\left(\sum_{k=N+1}^{+\infty}\frac{t^{2k}}{k!}\right)\,,
\end{equation}
so that $g_N\le g$, $g_N(0)=g(0)$ for all $N\ge 1$, while $g=g_N$ for $N=1$. We also set
\begin{equation}\label{NonlinEigen}
\Lambda_g(\Omega):=\max_{u\in H^1_0:\|u\|_{H^1_0}^2\le 4\pi} \int_\Omega\left((1+g(u))(1+u^2)-(1+g(0))\right) dx
\end{equation}
 We are now in position to state our main result, giving a new, very general and basically sharp picture about the existence of extremals for the perturbed Moser-Trudinger $(I_{4\pi}^g(\Omega))$. 
 
\begin{thm}[Existence and non-existence of an extremal]\label{MainThm}
Let $\Omega$ be a smooth bounded domain of $\mathbb{R}^2$. Let $g$ be such that \eqref{Propg} and \eqref{InftyBehavior}-\eqref{ZeroBehavior} hold true for $H$ as in \eqref{DefH}, and let $A$, $B$ and $F$ be thus given. Assume that 
\begin{equation}\label{Ratio}
l=\lim_{\gamma\to +\infty}  \frac{\gamma^{-4}+A(\gamma)/2+4\gamma^{-3}\exp(-1-M) B(\gamma) S}{\gamma^{-4}+|A(\gamma)|+\gamma^{-3}|B(\gamma)|}
\end{equation}
exists, where $M$ and $S$ are given by \eqref{MaxRobFun}. Then
\begin{enumerate}
\item if \fbox{$l>0$ or $\Lambda_g(\Omega)\ge \pi \exp(1+M)$}, $(I_{4\pi}^g(\Omega))$ admits an extremal, where $\Lambda_g(\Omega)$ is as in \eqref{NonlinEigen};
\item if \fbox{$l<0$ and $\Lambda_g(\Omega)<\pi \exp(1+M)$}, there exists $N_0\ge 1$ such that $(I_{4\pi}^{g_N}(\Omega))$ admits no extremal for all $N\ge N_0$, where $g_N$ is given by \eqref{gN}. 
\end{enumerate}
\end{thm}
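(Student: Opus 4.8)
The plan is to analyze the concentration-compactness alternative for maximizing sequences of $(I_{4\pi}^g(\Omega))$, following the now-classical blow-up scheme of Carleson–Chang–Struwe–Flucher, but tracking the perturbation $g$ through the finite-dimensional reduction encoded by $A$, $B$ and $F$. Let $(u_\varepsilon)$ be a family of maximizers for the subcritical problems $(I_{4\pi-\varepsilon}^g(\Omega))$, which exist by Lemma \ref{SubcriticalCase} and solve the Euler–Lagrange equation \eqref{ELEq1}. Either $u_\varepsilon$ stays bounded in a stronger norm, in which case one passes to the limit and obtains an extremal of $(I_{4\pi}^g(\Omega))$; or $u_\varepsilon$ blows up, concentrating (after extracting) at a point $x_0\in\Omega$, with $\|u_\varepsilon\|_{H^1_0}^2\to 4\pi$ and $|\nabla u_\varepsilon|^2\rightharpoonup 4\pi\,\delta_{x_0}$. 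The first main task is to show that in the blow-up case $x_0$ must lie in $K_\Omega$ (the Robin maximizer set) and to compute a sharp asymptotic expansion of the energy $\int_\Omega(1+g(u_\varepsilon))\exp(u_\varepsilon^2)$ as $\varepsilon\to 0$. After rescaling $u_\varepsilon$ around its maximum point by the natural Liouville scale, the bulk contributes the standard ``$\pi e^{1+M}$'' term coming from the Green's-function profile, the neck/outer region is governed by $\Lambda_g(\Omega)$ through \eqref{NonlinEigen}, and the inner region produces a correction whose leading coefficient is exactly the numerator of $l$ in \eqref{Ratio} — the $\gamma^{-4}$ piece being the Carleson–Chang constant, the $A(\gamma)$ piece the contribution of $H$ near $\gamma$ via \eqref{InftyBehavior}, and the $B(\gamma)S$ piece the contribution of $H$ near $0$ via \eqref{ZeroBehavior} and \eqref{MaxRobFun}. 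The upshot should be an expansion of the form $C_{g,4\pi-\varepsilon}(\Omega)=|\Omega|+\pi(1+g(0))+\pi e^{1+M}+(\text{denominator of }l)\cdot\big(l+o(1)\big)+\dots$, which one compares with the explicit test-function lower bound $|\Omega|+\pi(1+g(0))+\max(\pi e^{1+M},\ \Lambda_g(\Omega))$.

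For part (1), I would argue by contradiction: if no extremal exists, every maximizing family blows up, so the sharp energy expansion above holds; but if $l>0$ the inner correction is strictly positive, giving $C_{g,4\pi}(\Omega)>|\Omega|+\pi(1+g(0))+\pi e^{1+M}$, which contradicts the upper bound obtained by letting $\varepsilon\to 0$ in the blow-up analysis (the blow-up value being at most $|\Omega|+\pi(1+g(0))+\pi e^{1+M}$, or at most $\max$ with $\Lambda_g(\Omega)$ once one is careful). Symmetrically, if $\Lambda_g(\Omega)\ge \pi e^{1+M}$, an explicit non-concentrating test function built from the maximizer in \eqref{NonlinEigen} already achieves (a value at least) $|\Omega|+\pi(1+g(0))+\Lambda_g(\Omega)$, again beating what blow-up can give; so blow-up is impossible and an extremal exists. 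The key inputs here are the compactness/regularity lemmas for \eqref{ELEq1} and the sharp two-sided energy estimate.

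For part (2), assume $l<0$ and $\Lambda_g(\Omega)<\pi e^{1+M}$. The strategy is to show that for these $g$ the blow-up value $|\Omega|+\pi(1+g(0))+\pi e^{1+M}$ is \emph{strictly larger} than any value attained by a non-concentrating competitor, so that the supremum, if attained, would have to be attained by a blowing-up extremal — but the Pohozaev-type identity together with the sharp lower-order expansion (whose sign is now governed by $l<0$) forbids the existence of such a solution at level $4\pi$. The role of $g_N$ from \eqref{gN} is a truncation device: replacing $g$ by $g_N$ leaves untouched the behaviors near $+\infty$ and near $0$ that determine $A$, $B$, $F$, $l$, $M$, $S$ (so \eqref{Ratio} and the dichotomy are unchanged), while for $N$ large it forces $\Lambda_{g_N}(\Omega)$ to be small enough and, crucially, rules out the ``intermediate'' non-concentrating competitors that could otherwise attain the supremum, so that the only candidate maximizer is a blow-up solution — which cannot exist. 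Thus $(I_{4\pi}^{g_N}(\Omega))$ has no extremal for all $N\ge N_0$.

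The main obstacle, as usual in this circle of problems, is the \emph{sharp} energy expansion in the blow-up regime: one must control the solution $u_\varepsilon$ to sufficient precision in all three regions (inner Liouville bubble, neck, exterior), quantify the error terms uniformly using precisely the decay hypotheses \eqref{InftyBehavior}(b) and \eqref{ZeroBehavior}(b) — which is exactly why ``$g$ does not oscillate too fast'' is assumed — and identify the constant $S$ via the Green's representation on $K_\Omega$. A secondary but delicate point is the matching argument showing that the neck contribution is exactly $\Lambda_g(\Omega)$ and that the two regimes $l>0$ vs.\ $\Lambda_g(\Omega)\ge\pi e^{1+M}$ are genuinely complementary to the non-existence regime, i.e. that the upper bound from blow-up analysis is tight; establishing this tightness (sharp remainder, no loss in the exponential integrals) is where most of the technical work will lie.
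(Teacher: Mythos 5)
Your overall architecture matches the paper's — subcritical maximizers, the dichotomy ``compactness vs.\ concentration'', a refined blow-up expansion whose leading correction is exactly the numerator of $l$, and a test-function comparison — but the decisive inequality is drawn from the wrong quantity, and this creates a genuine gap. In the paper the contradiction comes from the expansion of the \emph{Dirichlet energy}: Proposition \ref{EnerExp} gives $\|u_\varepsilon\|_{H^1_0}^2=4\pi\bigl(1+I(\gamma_\varepsilon)+o(\gamma_\varepsilon^{-4}+|A(\gamma_\varepsilon)|+\gamma_\varepsilon^{-3}|B(\gamma_\varepsilon)|)\bigr)$, and when $l>0$ this forces $\|u_\varepsilon\|_{H^1_0}^2>4\pi$, contradicting the exact constraint $\|u_\varepsilon\|_{H^1_0}^2\le 4\pi(1-\varepsilon)$ at each fixed small $\varepsilon$ (and symmetrically $\|u_\varepsilon\|^2<4\pi$ versus $\|u_\varepsilon\|^2=4\pi$ when $l<0$). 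Your proposed comparison is instead between functional values: an expansion $C_{g,4\pi-\varepsilon}=T+(\text{denominator})\cdot(l+o(1))+\dots$ with $T=|\Omega|(1+g(0))+\pi e^{1+M}$ versus the upper bound $\le T$ for concentrating sequences. This does not close: the excess $(\text{denominator})(l+o(1))$ tends to $0$, so it is perfectly compatible with $\lim\int(1+g(u_\varepsilon))e^{u_\varepsilon^2}=T$ and gives no contradiction. To contradict the concentration level one needs a \emph{fixed} gap $C_{g,4\pi}>T$, and producing such a gap from $l>0$ alone is not justified — once the test function is renormalized to have norm $4\pi$, the leading correction $\gamma^2 I(\gamma)$ in the functional cancels against the norm excess (this is exactly the bookkeeping in \eqref{ExpFunctional}--\eqref{DefEtaEps}), and the surviving terms also involve $H(\gamma)-1$, which can be of the same order. (The functional-value argument does work for the other alternative $\Lambda_g(\Omega)\ge\pi e^{1+M}$, where the strict inequality $C_{g,4\pi}>\Lambda_g(\Omega)+(1+g(0))|\Omega|$ supplies the fixed gap; but $\Lambda_g$ is not a ``neck contribution'' of the blow-up expansion — in the concentrating regime the outer region contributes $|\Omega|(1+g(0))$ plus the $B(\gamma)S$ term, and $\Lambda_g$ enters only through this separate test function and, in Part (2), as the a priori bound forcing the weak limit to vanish.)

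For Part (2) there is a second missing idea. A non-existence statement for each fixed large $N$ cannot be obtained by blow-up analysis of a single extremal; the paper argues by contradiction along a sequence $N_\varepsilon\to+\infty$ of \emph{different} functionals, and the hypothesis $\Lambda_g(\Omega)<\pi e^{1+M}$ is what forces the corresponding extremals to concentrate (Step \ref{Step2}). The resulting blow-up analysis must then handle nonlinearities $\Psi'_{N_\varepsilon}$ that are not of uniform critical growth and degenerate toward a Lane--Emden regime; this produces the extra term $\xi_\varepsilon$ of \eqref{DefXi} in the expansion and is the main technical content of the paper (Lemma \ref{LemBlowUpAnalysis}, Case 1). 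Your proposal treats $g_N$ as a routine truncation ruling out ``intermediate competitors'' and invokes a Pohozaev-type identity, which is not the mechanism used and is not obviously sufficient; without confronting the non-uniformly-critical regime the argument for Part (2) does not go through.
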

Observe that, for all given $N\ge 1$, $g_N$ satisfies \eqref{Propg} and \eqref{InftyBehavior}-\eqref{ZeroBehavior}, with the same $A$, $B$ and $F$ as the original $g$. Moreover it is clear that $\Lambda_{g_N}(\Omega)\le \Lambda_g(\Omega)$. Then, this second assertion in Theorem \ref{MainThm} proves that the assumptions on $g$ in the first assertion are basically sharp to get the existence of an extremal for $(I_{4\pi}^g(\Omega))$. As a remark, Pruss concludes in \cite{Pruss} that the existence of an extremal for the critical Moser-Trudinger inequality is in some sense accidental and relies on non-asymptotic properties of $\exp(u^2)$. Theorem \ref{MainThm} clarifies this tricky situation: the existence or nonexistence of an extremal for $(I_{4\pi}^g(\Omega))$ may really depend on a balance of the asymptotic properties of $g$ both at infinity (given by $A(\gamma)$) and at zero (given by $B(\gamma)$). Yet, it may also depend on the non-asymptotic quantity $\Lambda_g(\Omega)$ (see Corollary \ref{NonAsymptCor}). Observe that $\Lambda_0(\Omega)=(4\pi)/\lambda_1(\Omega)$ in the unperturbed case $g\equiv 0$, where $\lambda_1(\Omega)$ is the first Dirichlet eigenvalue of $\Delta$ in $\Omega$. \\
\indent From now on, we illustrate Theorem \ref{MainThm} by two corollaries dealing with less general but more explicit situations. Let $c,c'\in\mathbb{R}$, $(a,b),(a',b')\in\mathcal{E}$, where
\begin{equation}\label{DefERonde}
\mathcal{E}=\left\{(a,b)\in [0,+\infty)\times \mathbb{R}~~\big| ~b>0\text{ if }a=0 \right\}\,.
\end{equation}
 Let $R'>0$ be a large positive constant. If one picks $g$ such that 
\begin{equation}\label{HypgDisk}
g(t)=
\begin{cases}
g_0(t):=g(0)+c t^{a+1}\log(1/t)^{-b}\text{ in }(0,1/R']\,,\\
g_\infty(t):=c' t^{-a'}(\log t)^{-b'} \text{ in }[R',+\infty)\,,
\end{cases}
\end{equation}
$l$ in \eqref{Ratio} of Theorem \ref{MainThm} can be made more explicit. Indeed, we can then set
\begin{equation}\label{Examples}
\begin{split}
&B(\gamma)= \frac{1+g(0)}{\gamma}+\frac{c(a+1)}{2}\gamma^{-a}\left(\log\gamma \right)^{-b}\text{ and }
\quad F(t)=t^{\min(a,1)}\,,
\\
&A(\gamma)=c'\times
\begin{cases}
a'\gamma^{-(a'+2)}(\log\gamma)^{-b'}\text{ if }a'>0\,,\\
b'\gamma^{-2}(\log\gamma)^{-(b'+1)}\text{ if }a'=0\,,
\end{cases}
\end{split}
\end{equation}
({see also Lemma \ref{HBehavior}}). Theorem \ref{MainThm} is even more explicit in the particular case $\Omega=\mathbb{D}^2$. Indeed, in this case we have that $K_{\mathbb{D}^2}=\{0\}$ in \eqref{MaxRobFun} and $G_0(x)=\frac{1}{2\pi}\log\frac{1}{|x|}$. Still on the unit disk $\mathbb{D}^2$, it is known that 
\begin{equation}\label{CondDisk}
\Lambda_0(\mathbb{D}^2)=\frac{4\pi}{\lambda_1(\mathbb{D}^2)}<\pi e\,,
\end{equation}
$(\lambda_1(\mathbb{D}^2)\simeq 5.78)$. Property \eqref{CondDisk} shows in particular that the second assertion $\Lambda_0(\mathbb{D}^2)\ge \pi e$ of Theorem \ref{MainThm}, Part (1), is not satisfied. In some sense, this is an additional motivation for the nice approach of \cite{CarlesonChang}, proving the existence of an extremal for $(I^0_{4\pi}(\mathbb{D}^2))$ via asymptotic analysis. As an illustration and a very particular case of Theorem \ref{MainThm}, we get the following corollary.

\begin{cor}[Case $\Omega=\mathbb{D}^2$]\label{CorD2} Assume that $\Omega=\mathbb{D}^2$. Let $c'\neq 0$ and $(a',b')\in\mathcal{E}$ be given, where $\mathcal{E}$ is as in \eqref{DefERonde}. Let $g_\infty$ be as in \eqref{HypgDisk}.
\begin{enumerate}  
\item If we assume \fbox{$a'>2$ or $c'>0$}, then for all even function $g\in C^2(\mathbb{R})$, zero in a neighborhood of $0$, such that $g>-1$ and 
\begin{equation}\label{Comparison}
g^{(i)}(t)=g_\infty^{(i)}(t)(1+o(1))
\end{equation}
 as $t\to +\infty$ for all $i\in\{0,1,2\}$, $(I_{4\pi}^g(\mathbb{D}^2))$ admits an extremal.
\item If we assume \fbox{$a'<2$ and $c'<0$}, there exists an even function $g\in C^2(\mathbb{R})$, zero in a neighborhood of $0$, such that $g>-1$ and such that \eqref{Comparison} holds true, while $(I_{4\pi}^g(\mathbb{D}^2))$ admits no extremal.
\end{enumerate}
\end{cor}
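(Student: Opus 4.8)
The plan is to read off the number $l$ of \eqref{Ratio} on $\Omega=\mathbb{D}^2$ and then invoke Theorem~\ref{MainThm}. On the disk $K_{\mathbb{D}^2}=\{0\}$, $G_0(y)=\tfrac1{2\pi}\log\tfrac1{|y|}$ and $\mathcal{H}_0\equiv 0$, so $M=0$. Every $g$ allowed in Corollary~\ref{CorD2} is flat at the origin (Part~(1)), or will be chosen to vanish there at least to order $4$ (Part~(2)); in both cases $\tfrac t\gamma H\!\left(\tfrac t\gamma\right)=\tfrac t\gamma+o(\gamma^{-1})$ locally uniformly, i.e. \eqref{ZeroBehavior} holds with $B(\gamma)=\gamma^{-1}$, $\kappa=1$, $\tilde\varepsilon_0=+1$ and $F(t)=t$. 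Hence $S=\int_{\mathbb{D}^2}G_0(y)\,F(4\pi G_0(y))\,dy=4\pi\int_{\mathbb{D}^2}G_0(y)^2\,dy=\tfrac12$ using $\int_0^1 r(\log r)^2\,dr=\tfrac14$, and, since $4\gamma^{-3}e^{-1-M}B(\gamma)S=2e^{-1}\gamma^{-4}$ and $\gamma^{-3}|B(\gamma)|=\gamma^{-4}$, formula \eqref{Ratio} collapses to
\[
l=\lim_{\gamma\to+\infty}\frac{(1+2e^{-1})\gamma^{-4}+\tfrac12 A(\gamma)}{2\gamma^{-4}+|A(\gamma)|}\,.
\]

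For $A(\gamma)$ I would combine \eqref{Comparison} with Lemma~\ref{HBehavior} (equivalently \eqref{Examples}): the assumption $g^{(i)}(t)=g_\infty^{(i)}(t)(1+o(1))$ for $i\le 2$ forces $A(\gamma)=c'a'\gamma^{-(a'+2)}(\log\gamma)^{-b'}(1+o(1))$ if $a'>0$, and $A(\gamma)=c'b'\gamma^{-2}(\log\gamma)^{-(b'+1)}(1+o(1))$ if $a'=0$. So $A(\gamma)$ eventually has the sign of $c'$ (recall $b'>0$ when $a'=0$), and $\gamma^4|A(\gamma)|\to0$ if $a'>2$ while $\gamma^4|A(\gamma)|\to+\infty$ if $a'<2$. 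Feeding this into the displayed limit: if $a'>2$ then $l=(1+2e^{-1})/2>0$; if $a'<2$ then $l=\tfrac12\,\mathrm{sign}(c')$; and in the borderline case $a'=2$ (relevant to Part~(1) only when $c'>0$) one checks in the same way that $l>0$. Thus in every situation of Part~(1) one gets $l>0$ and Theorem~\ref{MainThm}(1) produces an extremal for $(I_{4\pi}^g(\mathbb{D}^2))$; in the situation of Part~(2), $a'<2$ and $c'<0$, one gets $l=-\tfrac12<0$.

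For Part~(2) one must in addition exhibit a base function with $\Lambda_g(\mathbb{D}^2)<\pi e$ and, crucially, one that is genuinely flat at $0$. For the first point, take $g=g_\infty$ on $[R',+\infty)$ with $R'$ so large that $\|g_\infty\|_{L^\infty}$ on that set is tiny, extend it to $[0,R']$ with small sup-norm and $>-1$; then $(1+g(u))(1+u^2)-1\le u^2+\|g\|_{L^\infty}(1+u^2)$ together with $\|u\|_{L^2}^2\le 4\pi/\lambda_1(\mathbb{D}^2)$ gives $\Lambda_g(\mathbb{D}^2)\le\Lambda_0(\mathbb{D}^2)+C\|g\|_{L^\infty}<\pi e$ by \eqref{CondDisk}. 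For the second point, instead of taking $g\equiv0$ near $0$, I would prescribe $g(t)=\psi_N(t):=\big(\sum_{k=2}^N t^{2k}/k!\big)\big/\big(e^{t^2}-\sum_{k=2}^N t^{2k}/k!\big)$ on a neighborhood of $0$, for a large integer $N$ to be fixed last: $\psi_N$ is even, $C^\infty$, nonnegative, $O(t^4)$ at $0$ (so it changes neither $B(\gamma)=\gamma^{-1}$, $F(t)=t$, nor the smallness of $\|g\|_{L^\infty}$), and it is tailored precisely so that the associated $g_N$ of \eqref{gN} vanishes identically near $0$. This $g_N$ is even, $C^\infty$, $>-1$, satisfies \eqref{Comparison} (it differs from $g_\infty$, with its derivatives, by a superexponentially small term at $+\infty$) and has $\Lambda_{g_N}(\mathbb{D}^2)\le\Lambda_g(\mathbb{D}^2)<\pi e$ and the same $l=-\tfrac12<0$; by Theorem~\ref{MainThm}(2) it carries no extremal once $N$ is large enough, and then $g:=g_N$ is the desired function.

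The main obstacle is exactly this flatness point in Part~(2): the canonical family $\{g_M\}$ of \eqref{gN} attached to a function flat at $0$ is itself only $O(t^4)$ at $0$, which is why one is forced to feed Theorem~\ref{MainThm}(2) the $N$-adapted base function $g^{[N]}$ above rather than a fixed one. The single delicate point to check is that the threshold $N_0$ furnished by Theorem~\ref{MainThm}(2) stays bounded along the one-parameter family $\{g^{[N]}\}_N$ — legitimate since all its members share the tail $g_\infty$ and have the constants of \eqref{InftyBehavior}--\eqref{ZeroBehavior} and the bound $\Lambda_{g^{[N]}}(\mathbb{D}^2)<\pi e$ uniform in $N$ — so that $N$ can indeed be fixed above that uniform threshold.
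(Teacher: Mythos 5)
Your Part (1) is the paper's argument made explicit: the paper simply plugs \eqref{Examples} into \eqref{Ratio}, and your evaluation of $l$ on the disk (with $M=0$, $K_{\mathbb{D}^2}=\{0\}$, $B(\gamma)=\gamma^{-1}$, $F(t)=t$, $S=\tfrac12$) and the ensuing case analysis in $a'$, $b'$, $c'$ are correct and consistent with \eqref{Examples}.

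For Part (2) you follow the paper's strategy --- check $\Lambda<\pi e$ via \eqref{CondDisk} for a base function whose tail is $g_\infty$, compute $l=-\tfrac12<0$, and invoke Theorem \ref{MainThm}(2) --- but you diverge on one point. The paper fixes a single base function $g_R=g_\infty\,\chi(\cdot/R)$ (identically zero for $t\le R/2$), applies Theorem \ref{MainThm}(2) to it, and declares the counterexample to be $(g_R)_N$ for one $N\ge N_0$, merely noting that $(g_R)_N$ satisfies \eqref{Comparison}. You correctly observe that $(g_R)_N$ is then only $O(t^4)$ near $0$ rather than identically zero as the statement literally requires, and you repair this by pre-distorting the base function near the origin (setting it equal to $\psi_N$ there) so that the transform \eqref{gN} lands exactly on $0$; that computation is right, and $\psi_N=O(t^4)$ uniformly so neither $B(\gamma)=\gamma^{-1}$, $F(t)=t$, nor the bound $\Lambda<\pi e$ is disturbed. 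The price is that your base function now depends on $N$, so Theorem \ref{MainThm}(2) as stated (fixed $g$, threshold $N_0=N_0(g)$) does not apply off the shelf: you need $\sup_N N_0(g^{[N]})<\infty$, i.e.\ a version of Proposition \ref{NonExExtProp} in which the base function varies along the sequence. You name this as the delicate point and give the right reason it should hold (the constants entering \eqref{Propg}, \eqref{InftyBehavior}--\eqref{ZeroBehavior} and the bound $\Lambda_{g^{[N]}}(\mathbb{D}^2)<\pi e$ are uniform in $N$, and Lemma \ref{LemBlowUpAnalysis} uses only these data), but this is asserted rather than verified, so strictly speaking your route leaves a diagonalization step open that the paper's fixed-$g_R$ route avoids entirely. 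In short: your version buys literal conformity with ``zero in a neighborhood of $0$'' at the cost of a uniform-in-$N$ restatement of Proposition \ref{NonExExtProp}; the paper's version is self-contained but delivers a counterexample that is only flat to order $4$ at the origin.
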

\noindent Our main concern in Corollary \ref{CorD2} is to write a readable statement. In this result, the existence of an extremal in the unperturbed case $g\equiv 0$ is recovered for quickly decaying $g$'s, namely if $a'>2$ (see \cite{MartMan}). But a threshold phenomenon appears (only if $c'<0$) and there are no more extremal for slowlier decaying $g$'s, namely for $a'<2$. Note that Theorem \ref{MainThm} also allows to point out the existence of a threshold $c'<0$ in the border case $a'=2, b'=0$. Indeed, proving Corollary \ref{CorD2} basically reduces to give an explicit formula for $l$ in \eqref{Ratio}, which only depends on $\Omega$ and on the asymptotics of $g$ at $+\infty$ and at $0$. On the contrary, we do not care about the precise asymptotics of $g$ in the following corollary, thus illustrating the role of $\Lambda_g(\Omega)$ in Theorem \ref{MainThm}.
\begin{cor}[Extremal for $\Lambda_g(\Omega)$ large]\label{NonAsymptCor}
Let $\Omega$ be a smooth bounded domain of $\mathbb{R}^2$. Let $\lambda_1(\Omega)>0$ be the first Dirichlet eigenvalue of $\Delta$ in $\Omega$ and $M$ be given as in \eqref{MaxRobFun}. Let $\bar{A}$ be such that $4(1+\bar{A})>\lambda_1(\Omega)\exp(1+M)$ and let $C>\bar{A}$ be given. Then there exists $R\gg 1$ such that $(I_{4\pi}^g(\Omega))$ admits an extremal for all $g$ satisfying \eqref{Propg} and 
\begin{equation}\label{NonAsymptCorEq}
\begin{split}
&g(0)=\bar{A},\quad g\ge g(0) \text{ in }  [1/R,R] \quad \text{and}\quad |g|\le C \text{ in }\mathbb{R}\,.
\end{split}
\end{equation}
\end{cor}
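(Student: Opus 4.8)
The plan is to deduce this statement directly from Part (1) of Theorem \ref{MainThm}, by checking that the hypotheses are met and, crucially, that for a suitable choice of $R\gg 1$ the condition $\Lambda_g(\Omega)\ge \pi\exp(1+M)$ holds for \emph{every} $g$ obeying \eqref{NonAsymptCorEq}. First I would verify the structural hypotheses \eqref{InftyBehavior}--\eqref{ZeroBehavior}: since nothing is assumed on the asymptotics of $g$ at $+\infty$ beyond \eqref{Propg} and the $L^\infty$ bound, one cannot expect a nontrivial $A(\gamma)$, so the natural move is to absorb everything into the error terms, i.e.\ to take $A(\gamma)\equiv 0$ (or $o(\gamma^{-4})$) and $B(\gamma)=c\gamma^{-1}$ with $F\equiv 1$ (i.e.\ $\kappa=0$), using that $tH(t)/\gamma$ is controlled by $|g|\le C$ divided by $\gamma$, plus the $g'$ term which, for $g$ merely $C^1$, must be handled — here I expect one actually needs the hypotheses on $g$ to give enough regularity/decay of $g'$ near $0$ and near $\infty$, so one may restrict to $g$ that is, say, locally constant near $0$ and smooth, or simply note that the statement is about the \emph{existence} of extremals and the relevant sufficient condition $\Lambda_g(\Omega)\ge\pi\exp(1+M)$ in Theorem \ref{MainThm}(1) does not require computing $l$ at all. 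Indeed, the safest route is: once $\Lambda_g(\Omega)\ge\pi\exp(1+M)$ is established, Part (1) gives the extremal regardless of the value of $l$, so I only need \eqref{Propg} and \eqref{InftyBehavior}--\eqref{ZeroBehavior} to hold in \emph{some} trivial way, which the boundedness of $g$ and $g'$ (implied, after a harmless further smoothing, by the hypotheses and the intended applications) makes routine.

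The heart of the argument is the lower bound on $\Lambda_g(\Omega)$. Recall from \eqref{NonlinEigen} that
\begin{equation*}
\Lambda_g(\Omega)=\max_{\|u\|_{H^1_0}^2\le 4\pi}\int_\Omega\bigl((1+g(u))(1+u^2)-(1+g(0))\bigr)\,dx.
\end{equation*}
I would test this with $u=t\varphi_1$, where $\varphi_1>0$ is the first Dirichlet eigenfunction normalized by $\|\varphi_1\|_{H^1_0}^2=4\pi$, hence $\|t\varphi_1\|_{H^1_0}^2=4\pi t^2$, so $t\in(0,1]$ is admissible, and then optimize over $t$. Expanding,
\begin{equation*}
\int_\Omega\bigl((1+g(t\varphi_1))(1+t^2\varphi_1^2)-(1+g(0))\bigr)dx = t^2\!\int_\Omega\varphi_1^2 + \int_\Omega\bigl(g(t\varphi_1)-g(0)\bigr)dx + t^2\!\int_\Omega g(t\varphi_1)\varphi_1^2 .
\end{equation*}
By Poincaré and the normalization, $\int_\Omega\varphi_1^2 = 4\pi/\lambda_1(\Omega)$. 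For the remaining two integrals I use \eqref{NonAsymptCorEq}: the set where $t\varphi_1(x)\in[1/R,R]$ has measure tending (as $R\to\infty$, after also letting $t$ be bounded away from $0$) to $|\Omega|$ minus small pieces near $\partial\Omega$ and near the max of $\varphi_1$, and on that set $g(t\varphi_1)\ge g(0)=\bar A$; on the complement $|g|\le C$ controls the deficit, which is $O(\epsilon(R))$ uniformly in $g$. Thus, choosing first $t=t_0$ close to $1$ and then $R$ large,
\begin{equation*}
\Lambda_g(\Omega)\ \ge\ t_0^2\frac{4\pi}{\lambda_1(\Omega)} + t_0^2\bar A\!\int_\Omega\varphi_1^2 - \epsilon(R)\ \ge\ \frac{4\pi}{\lambda_1(\Omega)}(1+\bar A) - \epsilon'(R),
\end{equation*}
and since by hypothesis $4(1+\bar A)>\lambda_1(\Omega)\exp(1+M)$, i.e.\ $\dfrac{4\pi}{\lambda_1(\Omega)}(1+\bar A)>\pi\exp(1+M)$, picking $R$ large enough (depending only on $\Omega,\bar A,C$ through $\epsilon'$) forces $\Lambda_g(\Omega)>\pi\exp(1+M)$, uniformly over all admissible $g$.

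I expect the main obstacle to be making the ``$\epsilon(R)$ uniform in $g$'' step clean, i.e.\ quantifying the measure of $\{x:t_0\varphi_1(x)\notin[1/R,R]\}$ and bounding the contribution there by $C$ times that measure, independently of $g$ — this is a soft measure-theoretic estimate using only that $\varphi_1$ is a fixed continuous positive function vanishing exactly on $\partial\Omega$, so $|\{\varphi_1<1/(t_0R)\}|\to 0$ and $|\{\varphi_1>R/t_0\}|=0$ for $R$ large. A secondary, more technical point is reconciling the regularity demanded by \eqref{InftyBehavior}--\eqref{ZeroBehavior} (which involve $H$, hence $g'$) with the mere $C^1$ hypothesis on $g$ in \eqref{Propg}; I would address this either by observing that Theorem \ref{MainThm}(1) only needs the structural hypotheses to hold and these are satisfied with the trivial choices $A\equiv 0$, $B(\gamma)=g(0)\gamma^{-1}+o(\gamma^{-1})$ (the $g'$-term contributing only to the $o(\gamma^{-1})$ error once one notes $g$ is constant near $0$ in all intended applications and bounded with bounded derivative elsewhere), or, if one wants full generality, by a density/approximation remark. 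Neither point is deep; the corollary is essentially a packaging of the ``$\Lambda_g$ large'' branch of the main theorem together with a one-line test-function computation.
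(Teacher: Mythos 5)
Your proposal is correct and is essentially the paper's proof: the paper likewise tests $\Lambda_g(\Omega)$ with the first Dirichlet eigenfunction $v$ normalized by $\|v\|_{H^1_0}^2=4\pi$ (i.e.\ your $t_0=1$), uses $g\ge g(0)$ on $\{v\in[1/R,R]\}$, bounds the complementary contribution by $(|\bar A|+C)(1+\|v\|_{L^\infty}^2)\left|\{v\not\in[1/R,R]\}\right|\to 0$ uniformly in $g$, and then concludes from the branch $\Lambda_g(\Omega)\ge\pi\exp(1+M)$ of Theorem \ref{MainThm}, Part (1). Your secondary concern about verifying \eqref{InftyBehavior}--\eqref{ZeroBehavior} for a general such $g$ is legitimate, but it is a point the paper's own one-line reduction to Theorem \ref{MainThm} passes over in silence, so it does not distinguish the two arguments.
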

As a remark, in the process of the proof below (see Remark \ref{OpenPbm6}), we answer the very interesting Open problem 6 of \cite{MartMan}. \\ \\
\indent This paper is organized as follows. Theorem \ref{MainThm}, and Corollaries \ref{CorD2} and \ref{NonAsymptCor} are proved in Section \ref{SectProofs}. Theorem \ref{MainThm} follows from Propositions \ref{EnerExp} and \ref{NonExExtProp}, proved in Section \ref{SectEnerExp}. Both Propositions \ref{EnerExp} and \ref{NonExExtProp} are consequences of key Lemma \ref{LemBlowUpAnalysis}, which is proved in Section \ref{SectBlowUpAnalysis}, using some radial analysis results obtained in Appendix \ref{Appendix}.

\section{Proof of the main results}\label{SectProofs}
\noindent We begin by proving Corollary \ref{CorD2}, assuming that Theorem \ref{MainThm} holds true.
\begin{proof}[Proof of Corollary \ref{CorD2}] The first part of Corollary \ref{CorD2} is a straightforward consequence of the first part of Theorem \ref{MainThm}: plugging the formulas of \eqref{Examples} in \eqref{Ratio}, we get that $l>0$ for $g$ as in Case (1) of Corollary \ref{CorD2}. In order to prove the second part of Corollary \ref{CorD2}, we apply the second part of Theorem \ref{MainThm}. Let $\chi$ be a smooth nonnegative function in $\mathbb{R}$ such that $\chi(t)=0$ for all $t\le 1/2$ and $\chi(t)=1$ for all $t\ge 1$. By the Sobolev inequality and standard integration theory, we can check that $g_R:=g_\infty\times  \chi(\cdot/R)$ satisfies $\Lambda_{g_R}(\mathbb{D}^2)\to \Lambda_0(\mathbb{D}^2)$ as $R\to +\infty$. Then, by \eqref{Examples}, \eqref{CondDisk}, assuming $a'<2, c'<0$, the second part of Theorem \ref{MainThm} applies, starting from $g=g_R$, for $R\gg 1$ fixed sufficiently large. Observe that, for all given $N\gg 1$, $(g_R)_N$ (given by \eqref{gN} for $g=g_R$) satisfies \eqref{Comparison}. Corollary \ref{CorD2} is proved.  
\end{proof}
\begin{proof}[Proof of Corollary \ref{NonAsymptCor}] Let $\Omega, \bar{A}, \lambda_1(\Omega), C$ be as in the statement of the corollary. By Theorem \ref{NonAsymptCor}, it is sufficient to prove that there exists $R\gg 1$ such that for all $g$ satisfying \eqref{Propg} and \eqref{NonAsymptCorEq}, we have that $\Lambda_g(\Omega)\ge\pi \exp(1+M)$, where $\Lambda_g(\Omega)$ is as in \eqref{NonlinEigen}. Let $v>0$ in $\Omega$ be the first eigenvalue of $\Delta$ normalized according to $\|v\|_{H^1_0}^2=4\pi$. For all $g$  satisfying \eqref{NonAsymptCorEq}, we have that
\begin{equation*}
\begin{split}
\Lambda_g(\Omega)&\ge \int_\Omega \left((1+g(0)) v^2+(g(v)-g(0))(1+v^2) \right) dx\\
&\ge(1+\bar{A})\frac{4\pi}{\lambda_1(\Omega)}+\int_{\{v\not\in[1/R,R]\}}(g(v)-g(0))(1+v^2) dx\,,
\end{split}
\end{equation*}
and, since we have 
$$\left|\int_{\{v\not\in[1/R,R]\}}(g(v)-g(0))(1+v^2) dx \right|\le (|\bar{A}|+C)(1+\|v\|_{L^\infty}^2) \left|\{v\not\in[1/R,R]\}\right|\to 0 $$
as $R\to +\infty$, we get the result using that $4(1+\bar{A})>\lambda_1(\Omega)\exp(1+M)$. 
\end{proof}

The following proposition is the core of the argument to get the existence of an extremal in Theorem \ref{MainThm}, Part (1). Its proof is postponed in Section \ref{SectEnerExp}. It uses the tools developed in Druet-Thizy \cite{DruThiI} that allow us to push the asymptotic analysis of a concentrating sequence of extremals $(u_\varepsilon)_\varepsilon$ further than in previous works. In the process of the proof of Proposition \ref{EnerExp} (see Lemma \ref{Step3ExistExtremal}), we show first that a concentration point $\bar{x}$ of such $u_\varepsilon$'s realizes $M$ in \eqref{MaxRobFun}. But in the case where $|B(\gamma)|$ matters in \eqref{Ratio} or, in other words, where $\gamma^{3}|A(\gamma)|+\gamma^{-1}\lesssim |B(\gamma)|$ as $\gamma\to +\infty$, we also show that $S$ in \eqref{MaxRobFun} has to be attained at $\bar{x}$. 

\begin{prop}\label{EnerExp}
Let $\Omega$ be a smooth bounded domain of $\mathbb{R}^2$. Let $g$ be such that \eqref{Propg} and \eqref{InftyBehavior}-\eqref{ZeroBehavior} hold true, for $H$ as in \eqref{DefH}, and let $A$, $B$ and $F$ be thus given. Let $(u_\varepsilon)_\varepsilon$ be a sequence of nonnegative functions such that $u_\varepsilon$ is a maximizer for $(I_{4\pi(1-\varepsilon)}^g(\Omega))$, for all $0<\varepsilon\ll 1$. Assume that 
\begin{equation}\label{WeakConvToZero}
u_\varepsilon\rightharpoonup 0\text{ in }H^1_0\,,
\end{equation}
as $\varepsilon\to 0$.
 Then, $\|u_\varepsilon\|_{H^1_0}^2=4\pi(1-\varepsilon)$, there exists a sequence $(\lambda_\varepsilon)_\varepsilon$ of real numbers such that $u_\varepsilon$ solves in $H^1_0$
\begin{equation}\label{ELEq}
\begin{cases}
&\Delta u_\varepsilon=\lambda_\varepsilon u_\varepsilon H(u_\varepsilon) \exp(u_\varepsilon^2),\quad u_\varepsilon>0\text{ in }\Omega\,,\\
&u_\varepsilon=0\text{ on }\partial\Omega\,,
\end{cases}
\end{equation}
$u_\varepsilon\in C^{1,\theta}(\bar{\Omega})$ \textsc{(}$0<\theta<1$\textsc{)} and we have that
\begin{equation}\label{LossCompactness}
\gamma_\varepsilon:=\max_{y\in\Omega} u_\varepsilon \to +\infty\,.
\end{equation}
Moreover, we have that
\begin{equation}\label{Level}
\lim_{\varepsilon\to 0}\int_\Omega (1+g(u_\varepsilon)) \exp(u_\varepsilon^2) dx=|\Omega|(1+g(0))+\pi\exp(1+M)
\end{equation} 
 and that
\begin{equation}\label{EqEnerExp}
\|u_\varepsilon\|_{H^1_0}^2=4\pi\left(1+I(\gamma_\varepsilon)+o\left(\gamma_\varepsilon^{-4}+|A(\gamma_\varepsilon)|+\gamma_\varepsilon^{-3}|B(\gamma_\varepsilon)| \right)\right)
\end{equation}
as $\varepsilon\to 0$, where 
\begin{equation}\label{Eq2EnerExp}
I(\gamma_\varepsilon):=\gamma_\varepsilon^{-4}+A(\gamma_\varepsilon)/2+4\gamma_\varepsilon^{-3}\exp(-1-M) B(\gamma_\varepsilon) S\,,
\end{equation}
where $|\Omega|$ stands for the volume of the domain $\Omega$ and where $M$ and $S$ are as in \eqref{MaxRobFun}.
\end{prop}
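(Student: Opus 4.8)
The plan is to run a precise blow-up analysis on the sequence $(u_\varepsilon)_\varepsilon$ of subcritical extremals, pushing the expansion of the Dirichlet energy to the order at which the terms $\gamma_\varepsilon^{-4}$, $A(\gamma_\varepsilon)$ and $\gamma_\varepsilon^{-3}B(\gamma_\varepsilon)$ become visible. First I would establish the basic structure: since $u_\varepsilon$ maximizes $(I_{4\pi(1-\varepsilon)}^g(\Omega))$, a standard Lagrange multiplier argument together with the constraint being active (otherwise $u_\varepsilon$ would extremize the unconstrained functional, forcing $u_\varepsilon\equiv$ const $=0$, contradicting positivity of the supremum) gives $\|u_\varepsilon\|_{H^1_0}^2=4\pi(1-\varepsilon)$ and the Euler--Lagrange equation \eqref{ELEq}, with elliptic regularity yielding $u_\varepsilon\in C^{1,\theta}(\bar\Omega)$ and, via the maximum principle (recall $g>-1$, so $H>0$ where it matters, at least $\lambda_\varepsilon>0$), $u_\varepsilon>0$ in $\Omega$. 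The hypothesis \eqref{WeakConvToZero} rules out strong $H^1_0$ convergence, so by the Moser--Trudinger inequality and concentration-compactness $\gamma_\varepsilon=\max u_\varepsilon\to+\infty$, giving \eqref{LossCompactness}; here one also fixes the concentration point(s) $x_\varepsilon\to\bar x$ where the maximum is attained.

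Next I would invoke the key Lemma \ref{LemBlowUpAnalysis} (stated later and proved in Section \ref{SectBlowUpAnalysis}), which is where the real work sits: it provides the sharp pointwise description of $u_\varepsilon$ near and away from the concentration point. In the inner region one rescales $\gamma_\varepsilon(u_\varepsilon(x_\varepsilon+\mu_\varepsilon \cdot)-\gamma_\varepsilon)$, with $\mu_\varepsilon^2=\lambda_\varepsilon^{-1}\gamma_\varepsilon^{-2}H(\gamma_\varepsilon)^{-1}\exp(-\gamma_\varepsilon^2)$ the natural scale, and identifies the limit profile as the standard bubble $-\log(1+|y|^2/8)$ solving $\Delta w=e^{2w}$ on $\mathbb{R}^2$; assumption \eqref{InftyBehavior}(a)--(c) is exactly what guarantees that the perturbation $H(\gamma_\varepsilon-t/\gamma_\varepsilon)/H(\gamma_\varepsilon)\to 1$ locally uniformly so this limit is unchanged, while \eqref{InftyBehavior}(b) gives the exponential control needed to pass to the limit in the integrals. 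In the outer region $u_\varepsilon$ behaves like a multiple of the Green's function $G_{x_\varepsilon}$, and matching the two regimes forces the concentration level to be $\pi\exp(1+\mathcal H_{\bar x}(\bar x))$; maximality of $u_\varepsilon$ then forces $\bar x\in K_\Omega$, i.e.\ $\mathcal H_{\bar x}(\bar x)=M$, which yields \eqref{Level}. This step — extracting the second-order term in the energy, so that $A(\gamma_\varepsilon)$, $\gamma_\varepsilon^{-4}$ and the contribution of the behavior of $H$ near $0$ (encoded by $B(\gamma_\varepsilon)$, $F$, and hence $S$) all appear with the correct coefficients — is the main obstacle, and is precisely what the refined tools of Druet--Thizy \cite{DruThiI} and the radial ODE analysis of Appendix \ref{Appendix} are brought in to handle.

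For the energy expansion \eqref{EqEnerExp}--\eqref{Eq2EnerExp}, I would start from the Pohozaev-type / testing identities: writing $\|u_\varepsilon\|_{H^1_0}^2=\lambda_\varepsilon\int_\Omega u_\varepsilon^2 H(u_\varepsilon)\exp(u_\varepsilon^2)\,dx$ and splitting $\Omega$ into the concentration ball $B(x_\varepsilon,R\mu_\varepsilon)$, an intermediate neck region, and the exterior. On the concentration ball, one substitutes the sharp expansion of $u_\varepsilon$ from Lemma \ref{LemBlowUpAnalysis} — which itself must carry error terms $o(\gamma_\varepsilon^{-4}+|A(\gamma_\varepsilon)|+\gamma_\varepsilon^{-3}|B(\gamma_\varepsilon)|)$ — and integrates the Gaussian-type weight against the bubble, picking up the $\gamma_\varepsilon^{-4}$ term (the "Carleson--Chang" constant correction coming from the $1+|y|^2/8$ profile) and the $A(\gamma_\varepsilon)/2$ term (first-order variation of $H$ at infinity). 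The exterior and neck contributions, where $u_\varepsilon$ is small and $\frac{t}{\gamma}H(\frac t\gamma)\approx B(\gamma)F(t)$ by \eqref{ZeroBehavior}, produce the term $4\gamma_\varepsilon^{-3}\exp(-1-M)B(\gamma_\varepsilon)S$: here $u_\varepsilon\approx 4\pi\gamma_\varepsilon^{-1}G_{x_\varepsilon}$ away from $\bar x$, the factor $\exp(-1-M)$ comes from the value of $\gamma_\varepsilon\mu_\varepsilon^{\cdots}$-type constants at the concentration level, and $\int_\Omega G_{\bar x}F(4\pi G_{\bar x})\,dy=S$ (using $\bar x\in K_\Omega$ and, when $B$ dominates, that $\bar x$ must in fact maximize this integral over $K_\Omega$) by \eqref{MaxRobFun} and \eqref{ZeroBehavior}(b). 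Summing the three pieces, using $\|u_\varepsilon\|_{H^1_0}^2=4\pi(1-\varepsilon)$ on the left, and absorbing all remainders into $o(\gamma_\varepsilon^{-4}+|A(\gamma_\varepsilon)|+\gamma_\varepsilon^{-3}|B(\gamma_\varepsilon)|)$ gives \eqref{EqEnerExp}. The delicate bookkeeping is ensuring every error term genuinely sits below this threshold, for which assumptions \eqref{InftyBehavior}(b) and \eqref{ZeroBehavior}(b) with the exponents $\delta_0,\delta_0'<1$ are the crucial ingredients, since they dominate the perturbations by $\exp(\delta_0 t)$, integrable against the bubble weight $(1+|y|^2/8)^{-2}$-type decay.
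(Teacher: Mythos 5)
Your overall strategy coincides with the paper's: derive the Euler--Lagrange structure and blow-up via Lemma \ref{LemBlowUpAnalysis} (\textbf{Case 2}), then expand $\|u_\varepsilon\|_{H^1_0}^2=\lambda_\varepsilon\int_\Omega u_\varepsilon^2H(u_\varepsilon)\exp(u_\varepsilon^2)\,dx$ by splitting $\Omega$ into the concentration region and the exterior, with \eqref{InftyBehavior} producing the $A(\gamma_\varepsilon)/2$ term and \eqref{ZeroBehavior} producing the $B(\gamma_\varepsilon)$ term through the Green's-function profile of $u_\varepsilon$ away from $x_\varepsilon$. Up to that point your outline matches Lemma \ref{Step3ExistExtremal}, points (1)--(3).

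There is, however, one genuine gap: you assert parenthetically that, when the $B$-term dominates, the concentration point $\bar x$ ``must in fact maximize'' $\int_\Omega G_z(y)F(4\pi G_z(y))\,dy$ over $K_\Omega$, attributing this to \eqref{MaxRobFun} and \eqref{ZeroBehavior}, which do not prove it. The blow-up analysis alone only yields $\bar x\in K_\Omega$, and $K_\Omega$ may contain points with different values of this integral; without the identification, your computation gives \eqref{EqEnerExp} with $\int_\Omega G_{\bar x}F(4\pi G_{\bar x})\,dy$ in place of $S$, which is strictly weaker and not sufficient for the application in Theorem \ref{MainThm}, Part (1). The paper closes this by a second-order variational comparison (Lemma \ref{Step3ExistExtremal}, points (4)--(5)): one builds the explicit model functions $U_{\varepsilon,z}$ of \eqref{Model}, rescales them to $V_{\varepsilon,\bar x_0}$ with $\|V_{\varepsilon,\bar x_0}\|_{H^1_0}^2=4\pi(1-\varepsilon)$ for $\bar x_0$ a maximizer of $S$, expands the functional on both $u_\varepsilon$ and $V_{\varepsilon,\bar x_0}$ to precision $o(\gamma_\varepsilon^2\check\zeta_\varepsilon)$ (formulas \eqref{ExpFunctional} and \eqref{ParticularCaseExpFun2}), and uses the maximality $\int_\Omega(1+g(u_\varepsilon))\exp(u_\varepsilon^2)\,dy\ge\int_\Omega(1+g(V_{\varepsilon,\bar x_0}))\exp(V_{\varepsilon,\bar x_0}^2)\,dy$ to force $\int_\Omega G_{\bar x}F(4\pi G_{\bar x})\,dy=S$ whenever $\gamma_\varepsilon^{-3}B(\gamma_\varepsilon)$ is not negligible against $\gamma_\varepsilon^{-4}+|A(\gamma_\varepsilon)|$. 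This test-function step, which requires expanding the \emph{functional} (not just the energy) to the same order, is missing from your proposal and is not a routine addition.
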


\begin{rem}\label{OpenPbm6}
Let $g,H$ be such that \eqref{Propg}, \eqref{DefH}, \eqref{InftyBehavior}-\eqref{LimitB} hold true. Let $u_\varepsilon$ be a maximizer for $(I^g_{4\pi(1-\varepsilon)})$ such that \eqref{WeakConvToZero} holds true, as in Proposition \ref{EnerExp}. Then, for such a sequence $(u_\varepsilon)_\varepsilon$ satisfying in particular  \eqref{ELEq} and \eqref{LossCompactness}, we get in the process of the proof (see \eqref{AGammaSmall} below) that the term $I(\gamma_\varepsilon)$ in \eqref{EqEnerExp} is necessarily smaller than $o(\gamma_\varepsilon^{-2})$ as $\varepsilon\to 0$. Moreover this threshold $o(\gamma_\varepsilon^{-2})$ is sharp, in the sense that this term may be for instance of size $\gamma_\varepsilon^{-(2+a')}\,,$ for all given $a'\in(0,2]$. This can be seen by picking an appropriate $g$ such that $I^g_{4\pi}(\Omega)$ has no extremal, as in Corollary \ref{CorD2}, and by using Proposition \ref{EnerExp}. Observe that, for such a $g$, assumption \eqref{WeakConvToZero} is indeed automatically true. This gives an answer to Open Problem 6 in \cite{MartMan}.
\end{rem}

\begin{proof}[Proof of Theorem \ref{MainThm}, Part (1): existence of an extremal for $(I^g_{4\pi}(\Omega))$.] We first prove the existence of an extremal stated in Part (1) of Theorem \ref{MainThm}. Let $g$ be such that \eqref{Propg} and \eqref{InftyBehavior}-\eqref{ZeroBehavior} hold true, for $H$ as in \eqref{DefH}, and let $A$, $B$ and $F$ be thus given. Assume either that $l>0$ in \eqref{Ratio} or that $\Lambda_g(\Omega)\ge \pi \exp(1+M)$. Using Lemma \ref{SubcriticalCase}, let $(u_\varepsilon)_\varepsilon$ be a sequence of nonnegative functions such that $u_\varepsilon$ is a maximizer for $(I_{4\pi(1-\varepsilon)}^g(\Omega))$, for all $0<\varepsilon\ll 1$. Then, up to a subsequence, $(u_\varepsilon)_\varepsilon$ converges a.e. and weakly in $H^1_0$ to some $u_0$. Independently, we check that
\begin{equation}\label{PfThm1}
\lim_{\varepsilon\to 0}C_{g,4\pi(1-\varepsilon)}(\Omega)= C_{g,4\pi}(\Omega)\,,
\end{equation}
where $C_{g,\alpha}(\Omega)$ is as in $(I_\alpha^g(\Omega))$. Indeed, if one assumes by contradiction that the $C_{g,4\pi(1-\varepsilon)}(\Omega)$'s increase to some $\bar{l}<C_{g,4\pi}(\Omega)$ as $\varepsilon\to 0$, then we may choose some nonnegative $u$ such that $\|u\|_{H^1_0}^2 \le 4\pi$ and $\int_{\Omega} (1+g(u))\exp(u^2) dx>\bar{l}$. But, picking $v_\varepsilon=u\sqrt{1-\varepsilon}$, we have that $\|v_\varepsilon\|_{H^1_0}^2<4\pi$, and 
$$\lim_{\varepsilon\to 0} \int_{\Omega}(1+g(v_\varepsilon)) \exp(v_\varepsilon^2) dx=\int_\Omega (1+g(u))\exp(u^2) dx\,, $$
by the dominated convergence theorem, using \eqref{Propg}, $v_\varepsilon^2\le u^2$ and $\exp(u^2)\in L^1(\Omega)$. But this contradicts the definition of $\bar{l}$ and concludes the proof of \eqref{PfThm1}. Now, by \eqref{PfThm1} and since $\|u_0\|^2_{H^1_0}\le 4\pi$, in order to get that $u_0$ is the extremal for $(I^g_{4\pi}(\Omega))$ we look for, it is sufficient to prove that
\begin{equation}\label{PfThm2}
\lim_{\varepsilon\to 0}\int_\Omega (1+g(u_\varepsilon)) \exp(u_\varepsilon^2)~ dx=\int_\Omega (1+g(u_0)) \exp(u_0^2)~ dx\,. 
\end{equation}
  If $u_0=0$, then Proposition \ref{EnerExp} gives a contradiction: either by \eqref{Level} and \eqref{PfThm1} if $\Lambda_g(\Omega)\ge \pi \exp(1+M)$, since it is clear that
$$ C_{g,4\pi}(\Omega)>\Lambda_g(\Omega)+(1+g(0))|\Omega|\,,$$ 
or by \eqref{EqEnerExp}-\eqref{Eq2EnerExp} if $l>0$, since $\|u_\varepsilon\|_{H^1_0}\le 4\pi$. Thus, we necessarily have that $u_0\neq 0$. Then, noting that $\|u_\varepsilon-u_0\|_{H^1_0}^2\le 4\pi-\|u_0\|_{H^1_0}^2+o(1)$, the standard Moser-Trudinger inequality $(I_{4\pi}^0(\Omega))$ and some integration theory give that \eqref{PfThm2} still holds true, and Part (1) of Theorem \ref{MainThm} is proved in any case.
\end{proof}

The following proposition is the core of the argument to get the non-existence of an extremal in Theorem \ref{MainThm}, Part (2). Its proof is postponed in Section \ref{SectEnerExp}.

\begin{prop}\label{NonExExtProp}
Let $\Omega$ be a smooth bounded domain of $\mathbb{R}^2$. Let $g$ be such that \eqref{Propg} and \eqref{InftyBehavior}-\eqref{ZeroBehavior} hold true, for $H$ as in \eqref{DefH}, and let $A$, $B$ and $F$ be thus given. Assume that $\Lambda_g(\Omega)<\pi \exp(1+M)$, where $M$ is as in \eqref{MaxRobFun} and $\Lambda_g(\Omega)$ as in \eqref{NonlinEigen}. Assume that there exists a sequence of positive integers $(N_\varepsilon)_\varepsilon$ such that 
\begin{equation}\label{NToInfty}
\lim_{\varepsilon\to 0} N_\varepsilon=+\infty
\end{equation}
and such that $(I_{4\pi}^{g_{N_\varepsilon}}(\Omega))$ admits a nonnegative extremal $u_\varepsilon$ for all $\varepsilon>0$, where $g_{N_\varepsilon}$ is as in \eqref{gN}. Then we have \eqref{WeakConvToZero} and that $\|u_\varepsilon\|_{H^1_0}^2=4\pi$ for all $0<\varepsilon\ll 1$. Moreover, we have $u_\varepsilon\in C^{1,\theta}(\bar{\Omega})$ \textsc{(}$0<\theta<1$\textsc{)}, \eqref{LossCompactness} and that 
\begin{equation}\label{EqEnerExp2}
\|u_\varepsilon\|_{H^1_0}^2\le 4\pi\left(1+I(\gamma_\varepsilon)+o\left(\gamma_\varepsilon^{-4}+|A(\gamma_\varepsilon)|+\gamma_\varepsilon^{-3}|B(\gamma_\varepsilon)| \right)\right)
\end{equation}
as $\varepsilon\to 0$, where $I(\gamma_\varepsilon)$ is given by \eqref{Eq2EnerExp}.
\end{prop}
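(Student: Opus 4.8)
\textbf{Proof proposal for Proposition \ref{NonExExtProp}.}

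The plan is to run essentially the same blow-up analysis that underlies Proposition \ref{EnerExp}, but now applied to a \emph{diagonal} family in which the nonlinearity $g_{N_\varepsilon}$ itself varies with $\varepsilon$. First I would record the preliminary regularity and structural facts: since each $u_\varepsilon$ is an extremal for $(I_{4\pi}^{g_{N_\varepsilon}}(\Omega))$, a Lagrange multiplier argument (as in Lemma \ref{SubcriticalCase}) gives that $u_\varepsilon$ solves an Euler--Lagrange equation of the form \eqref{ELEq1} with $H$ replaced by the $H_{N_\varepsilon}$ associated to $g_{N_\varepsilon}$, that $\|u_\varepsilon\|_{H^1_0}^2 = 4\pi$, and that $u_\varepsilon \in C^{1,\theta}(\bar\Omega)$ by elliptic regularity (the right-hand side is bounded in every $L^p$ because of the Moser--Trudinger inequality). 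The key observation for the diagonal setup is that, by construction \eqref{gN}, for every fixed $N$ one has $g_N \le g$ and $g_N(0)=g(0)$, and moreover $g_N \to g$ (and $H_N \to H$) in a controlled way as $N\to\infty$, uniformly with the \emph{same} $A$, $B$, $F$ and the same constants in \eqref{InftyBehavior}--\eqref{ZeroBehavior}; this is already noted in the excerpt just after Theorem \ref{MainThm}. So all the asymptotic input needed to run the blow-up analysis of Lemma \ref{LemBlowUpAnalysis} is available \emph{uniformly in $N_\varepsilon$}.

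Next I would establish \eqref{WeakConvToZero}. Suppose not: then, up to a subsequence, $u_\varepsilon \rightharpoonup u_0 \neq 0$ in $H^1_0$. Because $g_{N_\varepsilon} \le g$ pointwise, we get $C_{g_{N_\varepsilon},4\pi}(\Omega) \le C_{g,4\pi}(\Omega)$, and using $\|u_\varepsilon - u_0\|_{H^1_0}^2 \le 4\pi - \|u_0\|_{H^1_0}^2 + o(1)$ together with the subcritical Moser--Trudinger inequality $(I_{4\pi}^0(\Omega))$ applied to $u_\varepsilon - u_0$, the integrals $\int_\Omega (1+g_{N_\varepsilon}(u_\varepsilon))\exp(u_\varepsilon^2)$ would converge to $\int_\Omega (1+g(u_0))\exp(u_0^2)$, exhibiting $u_0$ as a genuine extremal. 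But then $u_0$ would be, in particular, a critical point realizing a value strictly larger than the ``trivial'' competitor; comparing with concentrating test functions (the standard Moser sequence, whose limiting energy gives the term $\pi\exp(1+M)$ plus $(1+g(0))|\Omega|$) and using the hypothesis $\Lambda_g(\Omega) < \pi\exp(1+M)$, one sees that the supremum $C_{g_{N_\varepsilon},4\pi}(\Omega)$ cannot be attained by a non-concentrating function once $N_\varepsilon$ is large — more precisely, the strict inequality $C_{g_{N_\varepsilon},4\pi}(\Omega) > (1+g(0))|\Omega| + \Lambda_{g_{N_\varepsilon}}(\Omega)$ forces any extremal to concentrate. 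This contradiction yields $u_0 = 0$, hence \eqref{WeakConvToZero}, and then \eqref{LossCompactness} (the $L^\infty$ norms blow up) follows from the fact that a weakly-null concentrating maximizing family cannot stay bounded in $L^\infty$ without violating \eqref{PfThm1}-type level identities.

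Finally, with \eqref{WeakConvToZero} and \eqref{LossCompactness} in hand, I would quote Lemma \ref{LemBlowUpAnalysis} (valid uniformly in the varying parameter $N_\varepsilon$, by the remark above) to run the sharp asymptotic expansion of the energy of $u_\varepsilon$. This is exactly the computation that produces \eqref{EqEnerExp} in Proposition \ref{EnerExp}; the only difference here is that $u_\varepsilon$ is an extremal for the \emph{critical} problem $(I_{4\pi}^{g_{N_\varepsilon}}(\Omega))$ rather than for a subcritical one, so instead of the exact identity $\|u_\varepsilon\|_{H^1_0}^2 = 4\pi(1-\varepsilon)$ we only know $\|u_\varepsilon\|_{H^1_0}^2 = 4\pi$, and the variational comparison with concentrating test functions gives the expansion as an \emph{inequality} $\|u_\varepsilon\|_{H^1_0}^2 \le 4\pi(1 + I(\gamma_\varepsilon) + o(\cdots))$ — which is precisely \eqref{EqEnerExp2}. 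Concretely, one builds a family of test functions $\varphi_\varepsilon$ concentrating at a point of $K_\Omega$, tuned so that $\int_\Omega(1+g_{N_\varepsilon}(\varphi_\varepsilon))\exp(\varphi_\varepsilon^2)$ matches the limiting level $(1+g(0))|\Omega| + \pi\exp(1+M)$ up to the refined order $I(\gamma_\varepsilon)$; since $u_\varepsilon$ is a maximizer, $\int_\Omega(1+g_{N_\varepsilon}(u_\varepsilon))\exp(u_\varepsilon^2) \ge \int_\Omega(1+g_{N_\varepsilon}(\varphi_\varepsilon))\exp(\varphi_\varepsilon^2)$, and feeding this back through the blow-up expansion (which relates this energy excess to the Dirichlet-norm excess $\|u_\varepsilon\|_{H^1_0}^2 - 4\pi$) yields the stated upper bound.

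\textbf{Main obstacle.} The delicate point is \emph{uniformity in $N_\varepsilon$}: the blow-up analysis of Lemma \ref{LemBlowUpAnalysis} must hold with error terms $o(\gamma_\varepsilon^{-4} + |A(\gamma_\varepsilon)| + \gamma_\varepsilon^{-3}|B(\gamma_\varepsilon)|)$ that are genuinely uniform along the diagonal family, even though $g_{N_\varepsilon}$ has an infinite tail of monomials $\sum_{k>N_\varepsilon} t^{2k}/k!$ that is switched off only in the limit. One must check that \eqref{gN} is arranged so that, for $t$ in the relevant blow-up window $t \le \gamma_\varepsilon^2$, this tail contributes at an order that is $o$ of the tracked quantities uniformly in $N_\varepsilon$ — this is where the hypothesis $N_\varepsilon \to \infty$ is used — and that the constants $C$ in the exponential bounds \eqref{InftyBehavior}b) and \eqref{ZeroBehavior}b) can indeed be chosen independent of $N$. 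Once this bookkeeping is secured, the rest is a transcription of the argument already carried out for Proposition \ref{EnerExp}.
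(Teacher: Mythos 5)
Your overall architecture (reduce everything to the blow-up analysis of Lemma \ref{LemBlowUpAnalysis} applied along the diagonal family, obtain \eqref{WeakConvToZero} from the hypothesis $\Lambda_g(\Omega)<\pi\exp(1+M)$, then expand the Dirichlet energy) matches the paper's, and your treatment of \eqref{WeakConvToZero} is essentially the paper's Step \ref{Step2}: the precise point you should make explicit is that $u_0\neq 0$ would force $\int_\Omega\varphi_{N_\varepsilon}(u_\varepsilon^2)\,dy\to 0$ (via an $L^{p_0}$ bound on $\exp(u_\varepsilon^2)$, $p_0>1$, and $N_\varepsilon\to+\infty$), contradicting the lower bound $\liminf_{\varepsilon\to 0}\int_\Omega\varphi_{N_\varepsilon}(u_\varepsilon^2)\,dy>0$ that follows from \eqref{TestFunPrelBis}, \eqref{2ST2} and $\Lambda_g(\Omega)<\pi\exp(1+M)$.

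The genuine gap is in your explanation of where the inequality in \eqref{EqEnerExp2} comes from. It is not produced by a variational comparison with test functions converting an "energy excess" into a "Dirichlet-norm excess": there is no Dirichlet-norm excess, since $\|u_\varepsilon\|_{H^1_0}^2=4\pi$ exactly by \eqref{SaturCond}, and the content of \eqref{EqEnerExp2} is really the lower bound $I(\gamma_\varepsilon)\ge -o(\gamma_\varepsilon^{-4}+|A(\gamma_\varepsilon)|+\gamma_\varepsilon^{-3}|B(\gamma_\varepsilon)|)$. The actual mechanism is that in $(\text{\bf Case 1})$ the truncation of the exponential series introduces the new positive quantity $\xi_\varepsilon$ of \eqref{DefXi}, which propagates through the whole analysis: $A(\gamma_\varepsilon)$ is replaced by $A(\gamma_\varepsilon)-2\xi_\varepsilon$ in \eqref{ExpUInt2}, \eqref{ExtEstimate} and \eqref{LaplBPert}, and resuming the computations of \eqref{TestFunEnerComput}, \eqref{IPPUEps} and the appendix yields the \emph{exact} expansion $\|u_\varepsilon\|_{H^1_0}^2=4\pi\bigl(1+\check I(\gamma_\varepsilon)+o(\gamma_\varepsilon^{-4}+|A(\gamma_\varepsilon)|+\gamma_\varepsilon^{-3}|B(\gamma_\varepsilon)|+\xi_\varepsilon)\bigr)$ with $\check I(\gamma_\varepsilon)=I(\gamma_\varepsilon)-\xi_\varepsilon$; since $\xi_\varepsilon>0$, discarding $-\xi_\varepsilon(1+o(1))\le 0$ gives precisely the one-sided bound \eqref{EqEnerExp2}. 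Relatedly, your "main obstacle" paragraph underestimates the difficulty: the issue is not merely uniformity of the constants in \eqref{InftyBehavior} and \eqref{ZeroBehavior}, but that the nonlinearities $\Psi'_{N_\varepsilon}$ are not of uniform critical growth (Remark \ref{RemLaneEmden}); where $u_\varepsilon^2\lesssim N_\varepsilon$ the equation behaves like a Lane--Emden problem, which is why the paper introduces the set $\Omega_\varepsilon$ of \eqref{DefOmegaEps}, the bounds \eqref{MinorPhiNEps}--\eqref{MinorGammaByNEps} and \eqref{UInOmegaEps}, and the delicate estimates on $F_\varepsilon$ and $\xi_\varepsilon$ in the appendix. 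Without tracking $\xi_\varepsilon$, your argument cannot produce the inequality.
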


\begin{proof}[Proof of Theorem \ref{MainThm}, Part (2): non-existence of an extremal for $(I_{4\pi}^{g_N}(\Omega))$, $N\ge N_0$] Let $g$ be such that \eqref{Propg} and \eqref{InftyBehavior}-\eqref{ZeroBehavior} hold true, for $H$ as in \eqref{DefH}, and let $A$, $B$ and $F$ be thus given. Assume $l<0$ and $\Lambda_g(\Omega)<\pi \exp(1+M)$, where $l$ is as in \eqref{Ratio}, $\Lambda_g$ as in \eqref{NonlinEigen} and $M$ as in \eqref{MaxRobFun}. In order to prove Part (2) of Theorem \ref{MainThm}, we assume by contradiction that there exists a sequence $(N_\varepsilon)_\varepsilon$ of positive integers satisfying \eqref{NToInfty} and such that $(I_{4\pi}^{g_{N_\varepsilon}}(\Omega))$ admits an extremal, for $g_{N_\varepsilon}$ as in \eqref{gN}. We let $(u_\varepsilon)_\varepsilon$ be a sequence of nonnegative functions such that $u_\varepsilon$ is a maximizer for $(I_{4\pi}^{g_{N_\varepsilon}}(\Omega))$, for all $\varepsilon>0$. But this is not possible by Proposition \ref{NonExExtProp}, since $\|u_\varepsilon\|_{H^1_0}^2=4\pi$ contradicts \eqref{EqEnerExp2}, since we also assume now $l<0$. This concludes the proof of Part (2) of Theorem \ref{MainThm}.
\end{proof}

\section{Blow-up analysis in the strongly perturbed Moser-Trudinger regime}\label{SectBlowUpAnalysis}
In this section, we aim to prove the main blow-up analysis results that we need to get both Propositions \ref{EnerExp} and \ref{NonExExtProp}. The following preliminary lemma deals with the existence of an extremal for the perturbed Moser-Trudinger inequality $(I_\alpha^g(\Omega))$ in the subcritical case $0<\alpha<4\pi$. Its proof relies on integration theory combined with $(I_{4\pi}^0(\Omega))$, and on standard variational techniques. It is omitted here and the interested reader may find more details in the proof of Proposition 6 of \cite{MartMan}.
\begin{lem}\label{SubcriticalCase}
 Let $\Omega$ be a smooth bounded domain of $\mathbb{R}^2$. Let $g$ be such that \eqref{Propg} holds true. Then, $(I_\alpha^g(\Omega))$ admits a nonnegative extremal $u_\alpha$ for all $0<\alpha<4\pi$. Moreover, we have the following alternative
 \begin{enumerate}
\item $\text{either } \|u_\alpha\|_{H^1_0}^2<\alpha\text{ and } u_\alpha H(u_\alpha)=0\text{ a.e.}\,,$
\item $\text{or }\|u_\alpha\|_{H^1_0}^2=\alpha\text{ and there exists }\lambda\in\mathbb{R}\text{ such that } u_\alpha$ solves in $H^1_0$ the Euler-Lagrange equation \eqref{ELEq1}.
 \end{enumerate}
  
\end{lem}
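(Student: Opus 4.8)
The plan is to run the direct method of the calculus of variations; the decisive feature of the subcritical regime $\alpha<4\pi$ is that it provides enough compactness to pass to the limit in the nonlinear functional, which is exactly what fails at the critical level $\alpha=4\pi$.

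\emph{Existence of an extremal.} I would start from a maximizing sequence $(v_k)_k$ for the finite number $C_{g,\alpha}(\Omega)$; since the $H^1_0$-norm is unchanged under $v_k\mapsto|v_k|$ and $g$ is even by \eqref{Propg}, we may assume $v_k\ge0$. As $\|v_k\|_{H^1_0}^2\le\alpha$, up to a subsequence $v_k\rightharpoonup u_\alpha$ in $H^1_0$, $v_k\to u_\alpha$ in every $L^p(\Omega)$ and almost everywhere, and $u_\alpha\ge0$ with $\|u_\alpha\|_{H^1_0}^2\le\alpha$. The core point is to prove
\[
\int_\Omega(1+g(v_k))\exp(v_k^2)\,dx\longrightarrow\int_\Omega(1+g(u_\alpha))\exp(u_\alpha^2)\,dx\,.
\]
Here I would use that $\|v_k\|_{H^1_0}^2\le\alpha<4\pi$: by Moser's sharp inequality $\int_\Omega\exp(\tfrac{4\pi}{\alpha}v_k^2)\,dx\le C(|\Omega|)$ for all $k$, and $q:=\tfrac{4\pi}{\alpha}>1$. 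Since $1+g$ is bounded on $[0,+\infty)$ by \eqref{Propg}, the sequence $(1+g(v_k))\exp(v_k^2)$ is bounded in $L^q(\Omega)$ with $q>1$, hence uniformly integrable; being almost everywhere convergent to $(1+g(u_\alpha))\exp(u_\alpha^2)$, it converges in $L^1(\Omega)$ by Vitali's theorem. This yields $\int_\Omega(1+g(u_\alpha))\exp(u_\alpha^2)\,dx=C_{g,\alpha}(\Omega)$, so $u_\alpha$ is the desired nonnegative extremal.

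\emph{The alternative.} Set $f(t)=(1+g(t))\exp(t^2)$ and $J(u)=\int_\Omega f(u)\,dx$. A dominated-convergence argument, again via the Moser--Trudinger inequality applied after the bound $(|u|+\|\varphi\|_\infty)^2\le(1+\eta)u^2+C_\eta$ with $\eta>0$ small enough that $(1+\eta)\|u\|_{H^1_0}^2<4\pi$, shows that $J$ is Gateaux differentiable at every $u$ with $\|u\|_{H^1_0}^2<4\pi$, with $dJ(u)[\varphi]=2\int_\Omega uH(u)\exp(u^2)\varphi\,dx$ for $\varphi\in C_c^\infty(\Omega)$, by \eqref{Croissance}. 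If $\|u_\alpha\|_{H^1_0}^2<\alpha$, then $u_\alpha$ maximizes $J$ on a full $H^1_0$-ball around itself, so $dJ(u_\alpha)=0$, i.e.\ $u_\alpha H(u_\alpha)\exp(u_\alpha^2)=0$ almost everywhere, whence $u_\alpha H(u_\alpha)=0$ almost everywhere: this is case (1). If instead $\|u_\alpha\|_{H^1_0}^2=\alpha$, then $u_\alpha$ maximizes $J$ over the $C^1$ submanifold $\{\|u\|_{H^1_0}^2=\alpha\}$ of $H^1_0$, which does not contain $0$ since $\alpha>0$, so the Lagrange multiplier rule produces $\mu\in\mathbb{R}$ with $\int_\Omega u_\alpha H(u_\alpha)\exp(u_\alpha^2)\varphi\,dx=\mu\int_\Omega\nabla u_\alpha\cdot\nabla\varphi\,dx$ for all $\varphi\in C_c^\infty(\Omega)$. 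If $\mu\neq0$, this is exactly the weak form of \eqref{ELEq1} with $\lambda=\mu^{-1}$, which is case (2). To exclude $\mu=0$: in that case $u_\alpha H(u_\alpha)=0$ almost everywhere, i.e.\ $f'(u_\alpha)=0$ almost everywhere, and since $u_\alpha\ge0$ lies in $H^1_0$ it is absolutely continuous on almost every line, which forces $u_\alpha$ to take its values in an interval $[0,T]$ on which $f$ is constant equal to $f(0)$; then $J(u_\alpha)=f(0)|\Omega|=J(0)$, so $0$ is itself an extremal with $\|0\|_{H^1_0}^2=0<\alpha$, and replacing $u_\alpha$ by $0$ returns us to case (1).

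I expect the main obstacle to be the compactness in the existence step: it relies entirely on the strict gap $\tfrac{4\pi}{\alpha}>1$, which is precisely what disappears at $\alpha=4\pi$ and forces the delicate blow-up analysis of the rest of the paper. The two remaining technical points --- the Gateaux differentiability of $J$ (where one must keep $(1+\eta)\|u\|_{H^1_0}^2<4\pi$ when dominating) and the non-vanishing of the Lagrange multiplier --- are routine and are carried out in detail in the proof of Proposition~6 of \cite{MartMan}.
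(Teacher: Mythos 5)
Your argument is correct and coincides with the approach the paper has in mind: the proof of Lemma \ref{SubcriticalCase} is deliberately omitted there, described exactly as integration theory combined with $(I^0_{4\pi}(\Omega))$ plus standard variational techniques, with the details deferred to Proposition 6 of \cite{MartMan}, and your direct-method/Vitali argument for existence together with the Lagrange alternative (including the reduction of the degenerate multiplier $\mu=0$ to case (1) via the chain rule and connectedness of $\Omega$) is that standard proof. The only point worth making explicit is that your domination of the difference quotients of $t\mapsto(1+g(t))\exp(t^2)$ by $C_\eta\exp((1+\eta)t^2)$ implicitly uses that $tH(t)$, i.e. $g'$, grows no faster than $e^{\eta t^2}$, which \eqref{Propg} alone does not formally guarantee but which holds in every application of the lemma in the paper, where \eqref{InftyBehavior} is also in force (cf. \eqref{RoughEstimPsiPrime}).
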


\begin{rem}
The first alternative in Lemma \ref{SubcriticalCase} may occur in general, but does not if $t\mapsto (1+g(t))\exp(t^2)$ increases in $(0,+\infty)$.
\end{rem}
\noindent The following lemma investigates more precisely the behavior of $g$ and $H$, when we assume \eqref{Propg} together with \eqref{InftyBehavior}.
\begin{lem}\label{HBehavior} 
Let $\Omega$ be a smooth bounded domain of $\mathbb{R}^2$. Let $g$ be such that \eqref{Propg},  \eqref{InftyBehavior} and \eqref{ZeroBehavior} hold true, for $H$ as in \eqref{DefH}, and let $A, B$ and $\delta_0, \delta'_0, F, \kappa$ be thus given. Then we have that
\begin{equation}\label{ZeroBehaviorG}
\begin{split}
&
\begin{split}
\text{a) }\left(1+g\left(\frac{t}{\gamma} \right)\right)\exp\left(\frac{t^2}{\gamma^2} \right)=~ &(1+g(0))+\frac{2 B(\gamma) F(t)t}{\gamma(\kappa+1)}\\
&\quad\quad\quad+o\left(\frac{|B(\gamma)|}{\gamma}+\frac{1}{\gamma^2}\right)\,,
\end{split}
\\
&\text{ in }C^0_{loc}\left((0,+\infty)_t\right), \text{ as }  \gamma\to +\infty\,,\\
&\text{b) }\exists C>0\,,\\
&~~ \left|\left(1+g\left(\frac{t}{\gamma} \right)\right)\exp\left(\frac{t^2}{\gamma^2} \right)-(1+g(0))\right|\le C \left(\frac{|B(\gamma)|}{\gamma}+\frac{1}{\gamma^2}\right) {t\exp(\delta_0' t)}\,,\\
&\text{ for all }\gamma\gg 1 \text{ and  all }0\le t\le {\gamma}\,,\\
&\text{c) }\|g\|_{L^\infty(\mathbb{R})}<+\infty\,,
\end{split}
\end{equation}
and that
\begin{equation}\label{AsymptG}
\begin{split}
&\text{a) }{1+g\left(\gamma-\frac{t}{\gamma}\right)}={H(\gamma)}\left(1+A(\gamma) \left(t+\frac{1}{2}\right)+o(|A(\gamma)|+\gamma^{-4})\right)\,,\\
&\text{ in }C^0_{loc}\left((0,+\infty)_t\right), \text{ as }  \gamma\to +\infty\,,\\
&\text{b) }\exists C>0\,, \left|1+g\left(\gamma-\frac{t}{\gamma}\right)-H(\gamma) \right|\le C|H(\gamma)|(|A(\gamma)|+\gamma^{-4}) {\exp(\delta_0 t)}\,,\\
&\text{ for all }\gamma\gg 1 \text{ and  all }0\le t\le \gamma\,.
\end{split}
\end{equation}
In particular, we have that
\begin{equation}\label{EqHBehavior}
H(\gamma)\to 1\text{ as }\gamma\to +\infty.
\end{equation}
\end{lem}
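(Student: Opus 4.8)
The plan is to derive the three conclusions by carefully transferring the hypotheses \eqref{InftyBehavior}--\eqref{ZeroBehavior} on the auxiliary function $H$ back to the primitive-type quantity $1+g$, using the basic differential identity \eqref{Croissance}, namely $[(1+g(t))\exp(t^2)]'=2tH(t)\exp(t^2)$, together with the evenness of $(1+g)\exp(t^2)$.

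First I would prove \eqref{ZeroBehaviorG}a--b, the behavior near $0$. Writing $\varphi(s):=(1+g(s))\exp(s^2)$, which by \eqref{Croissance} satisfies $\varphi'(s)=2sH(s)\exp(s^2)$, I integrate from $0$ to $t/\gamma$: since $\varphi(0)=1+g(0)$, I get $\varphi(t/\gamma)-(1+g(0))=\int_0^{t/\gamma}2sH(s)\exp(s^2)\,ds$. Changing variables $s=\tau/\gamma$ turns this into $\gamma^{-1}\int_0^t 2\tfrac{\tau}{\gamma}H(\tfrac{\tau}{\gamma})\exp(\tau^2/\gamma^2)\,d\tau$. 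Now \eqref{ZeroBehavior}a says $\tfrac{\tau}{\gamma}H(\tfrac\tau\gamma)=B(\gamma)F(\tau)+o(|B(\gamma)|+\gamma^{-1})$ locally uniformly, and $\exp(\tau^2/\gamma^2)\to 1$ locally uniformly; with $F(\tau)=\tilde\varepsilon_0\tau^\kappa$ one has $\int_0^t\tau^\kappa\,d\tau=t^{\kappa+1}/(\kappa+1)$, so $\int_0^t 2F(\tau)\,d\tau=2F(t)t/(\kappa+1)$. This yields exactly the stated expansion, with the error controlled on compact $t$-sets by the dominated-convergence/uniform-bound argument using \eqref{ZeroBehavior}b (the factor $\exp(\delta_0'\tau)$ is integrable against the bound, giving the $t\exp(\delta_0't)$-type bound in \eqref{ZeroBehaviorG}b after integrating and using $0\le t\le\gamma$). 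Part c, boundedness of $g$, follows since $g$ is continuous, $g(s)\to0$ at $+\infty$ by \eqref{Propg}, $g$ is even, hence it suffices to bound $g$ on a compact interval, e.g.\ via \eqref{ZeroBehaviorG}b near $0$ and continuity elsewhere — or more directly from \eqref{Propg} alone plus \eqref{EqHBehavior}; I'd phrase it as: $1+g$ is bounded near $0$ by the expansion just obtained and near $\infty$ by \eqref{AsymptG}a below (or \eqref{Propg}), and continuous in between.

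Next, for \eqref{AsymptG}a--b, the behavior near the maximum point $\gamma$, I would again integrate $\varphi'$, this time from $\gamma-t/\gamma$ to $\gamma$ (or use that $\varphi$ has an even structure around… actually just integrate): $\varphi(\gamma)-\varphi(\gamma-t/\gamma)=\int_{\gamma-t/\gamma}^\gamma 2sH(s)\exp(s^2)\,ds$. Hmm — a cleaner route: divide \eqref{AsymptG} by $\exp((\gamma-t/\gamma)^2)$ is awkward; instead observe $1+g(\gamma-t/\gamma)=\varphi(\gamma-t/\gamma)\exp(-(\gamma-t/\gamma)^2)$ and similarly $H(\gamma)$ relates to $\varphi'(\gamma)$. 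I'd proceed as follows: from \eqref{InftyBehavior}a with the integral representation, write $1+g(\gamma-t/\gamma)-H(\gamma-t/\gamma)=\tfrac{g'(\gamma-t/\gamma)}{2(\gamma-t/\gamma)}$ by \eqref{DefH}, and control $g'$ via \eqref{Croissance} rearranged: $g'(s)=2sH(s)\exp(s^2)/\exp(s^2)\cdot\,$… more simply, $g'(s)=[\varphi(s)\exp(-s^2)]'=\varphi'(s)\exp(-s^2)-2s\varphi(s)\exp(-s^2)=2sH(s)-2s(1+g(s))=-2s\,\tfrac{g'(s)}{2s}$, which is a tautology. So the genuine route is the integral one: from \eqref{InftyBehavior}a, $H(\gamma-s/\gamma)=H(\gamma)(1+A(\gamma)s+o(\cdot))$, and one relates $1+g$ to an antiderivative of $\tfrac{\tau}{\gamma}H$-type terms near $\gamma$. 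Concretely, using $[(1+g(\gamma-\tau/\gamma))]' = \tfrac{1}{\gamma}\cdot(-1)g'(\gamma-\tau/\gamma)$ and $\tfrac{g'}{2\cdot}=H-1-g$, one gets a linear ODE in $\tau$ for $y(\tau):=1+g(\gamma-\tau/\gamma)$, namely $y'(\tau)=\tfrac{2(\gamma-\tau/\gamma)}{\gamma}\big(H(\gamma-\tau/\gamma)-y(\tau)\big)$; since the coefficient $\tfrac{2(\gamma-\tau/\gamma)}{\gamma}=2+O(\tau/\gamma^2)$ is close to $2$ and $H(\gamma-\tau/\gamma)=H(\gamma)(1+A(\gamma)\tau+\cdots)$, solving this ODE with the (to-be-justified) ``initial'' normalization $y\to H(\gamma)$-compatible data gives $y(\tau)=H(\gamma)(1+A(\gamma)(\tau+\tfrac12)+o(|A(\gamma)|+\gamma^{-4}))$, the shift $\tfrac12$ arising from $\int_0^\tau 2e^{-2(\tau-\sigma)}\sigma\,d\sigma\to \tau-\tfrac12+\cdots$ type constants in the variation-of-parameters formula. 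The exponential bound \eqref{AsymptG}b then comes from Grönwall applied to the same ODE together with \eqref{InftyBehavior}b, on $0\le\tau\le\gamma^2$ (hence a fortiori $0\le t\le\gamma$), absorbing constants into $\delta_0$. Finally \eqref{EqHBehavior} is immediate: evaluate \eqref{AsymptG}a (or its derivation) at $t=0$, giving $1+g(\gamma)=H(\gamma)(1+A(\gamma)/2+o(\cdot))$; since $g(\gamma)\to0$ by \eqref{Propg} and $A(\gamma)\to0$ by \eqref{InftyBehavior}c, we get $H(\gamma)\to1$.

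The main obstacle I expect is the near-$\gamma$ analysis \eqref{AsymptG}: one must handle the linear ODE $y'(\tau)=c_\gamma(\tau)(H(\gamma-\tau/\gamma)-y(\tau))$ with $c_\gamma(\tau)\approx2$ precisely enough to (i) extract the exact constant $\tfrac12$ in the shift $A(\gamma)(t+\tfrac12)$, which requires tracking the variation-of-parameters integral $\int_0^t c_\gamma(\sigma)e^{-\int_\sigma^t c_\gamma}H(\gamma)(1+A(\gamma)\sigma)\,d\sigma$ to the relevant order and checking that the correction from $c_\gamma(\tau)\ne2$ is $o(|A(\gamma)|+\gamma^{-4})$ on compacta; and (ii) obtain the uniform exponential bound up to $t=\gamma$ (indeed $\tau=\gamma^2$), which is where \eqref{InftyBehavior}b and a Grönwall estimate must be combined carefully so that the growth rate stays below some $\delta_0$ strictly less than $1$. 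The near-$0$ part \eqref{ZeroBehaviorG} is comparatively routine once the change of variables is set up, being a direct integration plus dominated convergence.
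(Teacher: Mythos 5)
Your treatment of \eqref{ZeroBehaviorG} is correct and is exactly the paper's argument: integrate \eqref{Croissance} from $0$ to $t/\gamma$, rescale $s=\tau/\gamma$, and insert \eqref{ZeroBehavior}; part c) is indeed immediate from \eqref{Propg}. The gap is in your near-$\gamma$ analysis, and it starts with a sign error. From \eqref{DefH} one has $g'(s)=2s\bigl(H(s)-(1+g(s))\bigr)$, so $y(\tau):=1+g(\gamma-\tau/\gamma)$ satisfies
$$y'(\tau)=-\frac{2(\gamma-\tau/\gamma)}{\gamma}\bigl(H(\gamma-\tau/\gamma)-y(\tau)\bigr)=c_\gamma(\tau)\bigl(y(\tau)-H(\gamma-\tau/\gamma)\bigr),\qquad c_\gamma(\tau)=2-\tfrac{2\tau}{\gamma^2}>0,$$
not the stable equation $y'=c_\gamma(H-y)$ you wrote. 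The homogeneous solutions grow like $e^{2\tau}$ forward in $\tau$, so your plan — solve by variation of parameters from an initial condition at $\tau=0$ and close with a forward Gr\"onwall estimate up to $\tau=\gamma^2$ — fails twice over: (i) any error in the data at $\tau=0$ is amplified by $e^{2t}$, which destroys the target bound $C|H(\gamma)|(|A(\gamma)|+\gamma^{-4})\exp(\delta_0 t)$ with $\delta_0<1$; and (ii) the data you would need at $\tau=0$, namely $1+g(\gamma)=H(\gamma)(1+A(\gamma)/2+o(|A(\gamma)|+\gamma^{-4}))$, is precisely the $t=0$ case of the conclusion, so the normalization you defer is circular. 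Your own kernel computation also betrays the sign problem: $\int_0^\tau 2e^{-2(\tau-\sigma)}\sigma\,d\sigma\to\tau-\tfrac12$, which would give the shift $A(\gamma)(t-\tfrac12)$, not the $A(\gamma)(t+\tfrac12)$ of \eqref{AsymptG}.

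The repair is to anchor at the far end, which is what the paper does: integrate $[(1+g)e^{s^2}]'=2sHe^{s^2}$ from $s=\gamma-1$ down to $s=\gamma-t/\gamma$ (equivalently, solve the unstable ODE backward from $\tau=\gamma$). Then the boundary term $(1+g(\gamma-1))\exp\bigl((\gamma-1)^2-(\gamma-t/\gamma)^2\bigr)$ carries the exponentially small weight $e^{2t-2\gamma+1}$, so only $1+g(\gamma-1)=O(1)$ (from \eqref{Propg}) and \eqref{EqHBehavior} are needed there; the variation-of-parameters kernel becomes $e^{-2(u-t)}$ for $u\ge t$, and $A(\gamma)\int_t^{\gamma}e^{-2(u-t)}\,du\cdot(\ldots)$ produces the constant $+\tfrac12$; and the error from \eqref{InftyBehavior}b contributes $\int_t^\gamma\exp(\gamma^2-(2-\delta_0)u)\,du\lesssim\exp(\gamma^2-(2-\delta_0)t)$, which after multiplication by $\exp(-(\gamma-t/\gamma)^2)$ gives exactly $\exp(\delta_0 t)$. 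Note also that \eqref{EqHBehavior} must then be proved first (the paper does so by the same integral taken over all of $[0,\gamma]$), since it is used to normalize the boundary term; your plan derives it only at the end, after \eqref{AsymptG}a.
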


\begin{proof}[Proof of Lemma \ref{HBehavior}] We first prove \eqref{EqHBehavior}. Using \eqref{Croissance}, we write
\begin{equation}\label{Interm000}
(1+g(r))\exp(r^2)-(1+g(0))=2\int_0^r s H(s) \exp(s^2) ds\,,
\end{equation}
for all $r\ge 0$. Then, as $\gamma\to +\infty$, setting $r=\gamma$, we can write 
\begin{equation*}
\begin{split}
&1+g(\gamma)\\
&= \exp(-\gamma^2)\left(1+g(0) \right)+2\int_0^{\gamma^2} \left(1-\frac{u}{\gamma^2}\right)H\left(\gamma-\frac{u}{\gamma}\right) \exp\left(-2u+\frac{u^2}{\gamma^2} \right) du\,,\\
&=O\left(\exp(-\gamma^2)\right)+2H(\gamma)\int_0^{\gamma^2} \left(1-\frac{u}{\gamma^2}\right)\exp\left(-2u+\frac{u^2}{\gamma^2} \right) du\,, \\
&\quad +O\left(|H(\gamma)|(|A(\gamma)|+\gamma^{-4})\int_0^{\gamma^2} \exp(-(1-\delta_0)u)\exp\left(-u\left(1-\frac{u}{\gamma^2} \right)\right)du \right)\,,\\
&=O\left(\exp(-\gamma^2)\right)+H(\gamma)\left(1+\exp(-\gamma^2) \right)+o(H(\gamma))\,,
\end{split}
\end{equation*}
using \eqref{InftyBehavior}. This proves \eqref{EqHBehavior} since $g$ satisfies \eqref{Propg}.  Observe that parts $a)$ and $b)$ of \eqref{ZeroBehaviorG} follow from \eqref{ZeroBehavior} and \eqref{Interm000} with $r=t/\gamma$, while part $c)$ of \eqref{ZeroBehaviorG} is a straightforward consequence of \eqref{Propg}. We prove now part $b)$ of \eqref{AsymptG}. As $\gamma\to +\infty$, we write for all $0\le t\le \gamma$ 
\begin{equation*}
\begin{split}
&\left(1+g\left(\gamma-\frac{t}{\gamma} \right) \right)\exp\left(\left(\gamma-\frac{t}{\gamma} \right)^2\right)-\left(1+g(\gamma-1)\right) \exp((\gamma-1)^2)\,,\\
&=2\int_{\gamma-1}^{\gamma-\frac{t}{\gamma}} r H(r) \exp(r^2) dr\,,\\
&=2\int_{t}^{\gamma}\left(1-\frac{u}{\gamma^2}\right)H\left(\gamma-\frac{u}{\gamma}\right) \exp\left(\gamma^2-2u+\frac{u^2}{\gamma^2} \right) du\,,\\
&=H(\gamma)\left( \exp\left(\left(\gamma-\frac{t}{\gamma} \right)^2\right)-\exp((\gamma-1)^2)\right)\\
&\quad +O\left(|H(\gamma)|(|A(\gamma)|+\gamma^{-4})\int_t^\gamma\exp\left(\gamma^2-(2-\delta_0)u\right) du\right) \,,
\end{split}
\end{equation*}
using $b)$ in \eqref{InftyBehavior}. Multiplying the above identity by $\exp(-(\gamma-(t/\gamma))^2)$, using $t\le \gamma$, \eqref{Propg} and \eqref{EqHBehavior}, part $b)$ of \eqref{AsymptG} easily follows. Using now $a)$ of \eqref{InftyBehavior} in the above before last inequality, we also get part $a)$ of \eqref{AsymptG}.
\end{proof}

\noindent In the sequel, for all integer $N\ge 1$, we let $\varphi_{N}$ be given by (see also \eqref{FormulaPhi} below)
\begin{equation}\label{DefPhiN}
\varphi_N(t)=\sum_{k=N+1}^{+\infty}\frac{t^k}{k!}\,.
\end{equation}
The main results of this section are stated in the following lemma.

\begin{lem}\label{LemBlowUpAnalysis}
Let $\Omega$ be a smooth bounded domain of $\mathbb{R}^2$. Let $g$ be such that \eqref{Propg} and \eqref{InftyBehavior}-\eqref{ZeroBehavior} hold true, for $H$ as in \eqref{DefH}, and let $A$, $B$ and $F$ be thus given. Let $(\alpha_\varepsilon)_\varepsilon$ be a sequence of numbers in $(0,4\pi]$. Let $(N_\varepsilon)_\varepsilon$ be a sequence of positive integers. Assume that 
\begin{equation}\label{ExtremUEps}
\lim_{\varepsilon\to 0}\alpha_\varepsilon=4\pi\text{ and that }u_\varepsilon\text{ is an extremal for }(I_{\alpha_\varepsilon}^{g_{N_\varepsilon}}(\Omega))\,,
\end{equation}
 for all $0<\varepsilon\ll 1$, where $g_{N_\varepsilon}$ is as in \eqref{gN}. Assume in addition that we are in one of the following two cases:
 \begin{eqnarray}
 &\text{{\bf (Case 1)}}\quad\quad \quad \quad &\lim_{\varepsilon\to 0}N_\varepsilon= +\infty\,, \alpha_\varepsilon=4\pi\text{ for all }\varepsilon\,,\text{ and}\nonumber  \\
&~&\quad\quad\Lambda_g(\Omega)<\pi \exp(1+M)\,, \label{AssumptCase1}
\end{eqnarray}
where $\Lambda_g(\Omega)$ is as in \eqref{NonlinEigen} and $M$ as in \eqref{MaxRobFun}, or
\begin{equation*}
\text{{\bf (Case 2)}}\quad\quad\quad N_\varepsilon=1\text{ for all }\varepsilon\text{ and \eqref{WeakConvToZero} holds true}\,.
\end{equation*} 
Then, up to a subsequence, 
\begin{equation}\label{SaturCond}
\|u_\varepsilon\|^2_{H^1_0}=\alpha_\varepsilon\,,
\end{equation}
 $u_\varepsilon\in C^{1,\theta}(\bar{\Omega})$ $(0<\theta<1)$ solves
\begin{equation} \label{ELEqNEps}
\begin{cases}
&\Delta u_\varepsilon=\lambda_\varepsilon u_\varepsilon H_{N_\varepsilon}(u_\varepsilon) \exp(u_\varepsilon^2),\quad u_\varepsilon>0\text{ in }\Omega\,,\\
&u_\varepsilon=0\text{ on }\partial\Omega\,,
\end{cases}
\end{equation}
where $H_{N}(t)=1+g_{N}(t)+\frac{g_{N}'(t)}{2t}$. Moreover, we have \eqref{Level}, that
\begin{equation}\label{EqualLambdaEps}
\lambda_\varepsilon=\frac{4+o(1)}{\gamma_\varepsilon^2\exp(1+M)}\,,
\end{equation}
that
\begin{equation}\label{AGammaSmall1}
{A(\gamma_\varepsilon)}-2\xi_\varepsilon=o\left(\tilde{\zeta}_\varepsilon \right)\,,
\end{equation}
and that
\begin{equation}\label{XNotToBDry}
x_\varepsilon\to\bar{x}, \quad (\bar{x}\in K_\Omega)
\end{equation}
as $\varepsilon\to 0$, where $x_\varepsilon, \gamma_\varepsilon$ satisfy
\begin{equation}\label{XEps}
u_\varepsilon(x_\varepsilon)=\max_\Omega u_\varepsilon=\gamma_\varepsilon\to +\infty\,,
\end{equation}
as $\varepsilon\to 0$, where $\xi_\varepsilon$ is given by \eqref{DefXi}
\begin{equation}\label{DefXi}
\xi_\varepsilon=\frac{\gamma_\varepsilon^{2(N_\varepsilon-1)}}{\varphi_{N_\varepsilon-1}(\gamma_\varepsilon^2)(N_\varepsilon-1) !}\,,
\end{equation}
and where $\tilde{\zeta}_\varepsilon$ is given by
\begin{equation}\label{TildeZeta}
\tilde{\zeta}_\varepsilon=\max\left(\frac{1}{\gamma_\varepsilon^2}, |A(\gamma_\varepsilon)|, \xi_\varepsilon \right) \,.
\end{equation}
At last, \eqref{AFirstPointwEst}-\eqref{ExtEstimate} below hold true, for $\mu_\varepsilon$ as in \eqref{ScalRel} and $t_\varepsilon$ as in \eqref{TEps}. 
\end{lem}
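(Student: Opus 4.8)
The plan is to run a concentration–compactness / blow-up analysis for the sequence $(u_\varepsilon)_\varepsilon$ of subcritical extremals, pushing it to high precision in the spirit of Druet–Thizy. First I would establish the ``easy'' structural facts: that the Lagrange multiplier is positive and nontrivial, hence by elliptic regularity $u_\varepsilon\in C^{1,\theta}(\bar\Omega)$ and $u_\varepsilon$ solves \eqref{ELEqNEps} with $u_\varepsilon>0$ (strong maximum principle), and that the constraint is saturated, $\|u_\varepsilon\|_{H^1_0}^2=\alpha_\varepsilon$, since otherwise the first alternative of Lemma \ref{SubcriticalCase} together with $\alpha_\varepsilon\to4\pi$ and the behavior of $H$ at infinity (Lemma \ref{HBehavior}, \eqref{EqHBehavior}) would force $u_\varepsilon$ to stay bounded, contradicting that the critical level exceeds the level attained by bounded families. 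The dichotomy is then: either $u_\varepsilon$ is precompact in $H^1_0$, giving an extremal at the limit (this is excluded by the case hypotheses: in Case 1 by $\Lambda_g(\Omega)<\pi e^{1+M}$ together with the known value of the critical constant, and in Case 2 by \eqref{WeakConvToZero} which kills any nonzero weak limit via the standard Moser–Trudinger inequality), or else blow-up occurs, i.e. $\gamma_\varepsilon=\max u_\varepsilon\to+\infty$. So \eqref{LossCompactness}/\eqref{XEps} follows, and $u_\varepsilon\rightharpoonup 0$.

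Next I would carry out the quantitative blow-up analysis at the maximum point $x_\varepsilon$. Set the concentration scale $\mu_\varepsilon$ by $\mu_\varepsilon^{-2}=\lambda_\varepsilon\gamma_\varepsilon^2 H_{N_\varepsilon}(\gamma_\varepsilon)\exp(\gamma_\varepsilon^2)$ (this is \eqref{ScalRel} in the paper), and rescale $v_\varepsilon(x)=\gamma_\varepsilon(u_\varepsilon(x_\varepsilon+\mu_\varepsilon x)-\gamma_\varepsilon)$. The standard argument (elliptic estimates, the fact that $\gamma_\varepsilon u_\varepsilon(x_\varepsilon+\mu_\varepsilon\cdot)\le\gamma_\varepsilon^2$) gives $v_\varepsilon\to v_0$ in $C^1_{loc}(\mathbb{R}^2)$ where $v_0=-\log(1+\frac{\pi}{?}|x|^2)$-type bubble solving $\Delta v_0=e^{2v_0}$ with the Liouville mass $\int e^{2v_0}=4\pi$; this forces the asymptotic $\mu_\varepsilon^2\gamma_\varepsilon^2\lambda_\varepsilon\exp(\gamma_\varepsilon^2)\to$ const and, after relating $\gamma_\varepsilon^{-2}$ to the energy that escapes, the value $\pi e^{1+M}$ for the residual energy \eqref{Level} — here the location statement \eqref{XNotToBDry}, $x_\varepsilon\to\bar x\in K_\Omega$, and the Lagrange multiplier asymptotics \eqref{EqualLambdaEps} come out of a Pohozaev identity / Green's representation comparison: $u_\varepsilon\to 2\sqrt{\pi}\,G_{\bar x}$ in $C^1_{loc}(\Omega\setminus\{\bar x\})$ after suitable normalization, and $\bar x$ must maximize the Robin function because otherwise one could test with a translated/truncated bubble and beat the critical constant, which is a standard test-function computation. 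The appendix's radial ODE analysis is what upgrades these $C^1_{loc}$ statements to the pointwise estimates \eqref{AFirstPointwEst}–\eqref{ExtEstimate} needed downstream, via comparison of $u_\varepsilon$ with the explicit radial profile on annuli.

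The genuinely hard part — and the reason \eqref{InftyBehavior}, \eqref{ZeroBehavior} and the quantities $A(\gamma_\varepsilon)$, $\xi_\varepsilon$, $\tilde\zeta_\varepsilon$ enter — is the identity \eqref{AGammaSmall1}, i.e. the precise matching of the ``$g$ at infinity'' contribution (encoded in $A(\gamma_\varepsilon)$ via part a) of \eqref{InftyBehavior} and Lemma \ref{HBehavior}) with the ``polynomial tail'' contribution $\xi_\varepsilon$ coming from the $\varphi_{N_\varepsilon-1}$ part of $g_{N_\varepsilon}$ through \eqref{gN}. The plan here is to derive a Pohozaev-type balance: multiply \eqref{ELEqNEps} by $x\cdot\nabla u_\varepsilon$ (or by $u_\varepsilon$ itself) and integrate on a ball of intermediate radius $\rho_\varepsilon$ with $\mu_\varepsilon\ll\rho_\varepsilon\ll1$; the boundary terms are controlled by the pointwise estimates and the Green's function expansion, while the bulk term $\int_{B_{\rho_\varepsilon}}\lambda_\varepsilon u_\varepsilon H_{N_\varepsilon}(u_\varepsilon)\exp(u_\varepsilon^2)$ must be expanded to order $o(\tilde\zeta_\varepsilon)$. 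On the region where $u_\varepsilon$ is of order $\gamma_\varepsilon$, writing $u_\varepsilon=\gamma_\varepsilon-t/\gamma_\varepsilon$, the factor $H_{N_\varepsilon}(\gamma_\varepsilon-t/\gamma_\varepsilon)/H_{N_\varepsilon}(\gamma_\varepsilon)$ is expanded by \eqref{InftyBehavior} a) (the exponential-in-$t$ bound b) guaranteeing that the $t$-integrals converge and that the $t\le\gamma_\varepsilon^2$ truncation is harmless), producing exactly the term $A(\gamma_\varepsilon)\int t\,e^{2v_0(t)}\ldots$, whose value is $A(\gamma_\varepsilon)/2$ after the normalization built into \eqref{Eq2EnerExp}; meanwhile the difference $H_{N_\varepsilon}-H$ (the effect of replacing $g$ by $g_{N_\varepsilon}$) is computed directly from \eqref{gN}–\eqref{DefPhiN} and produces $\xi_\varepsilon$ from \eqref{DefXi}. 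Forcing the total to vanish to the required order yields \eqref{AGammaSmall1}. The delicate points I expect to fight with are: (i) justifying the interchange of limit and integral uniformly up to the scale $\gamma_\varepsilon^2$, which is precisely what the uniform bounds b) in \eqref{InftyBehavior}–\eqref{ZeroBehavior} are designed for; (ii) showing that the intermediate ``neck'' region $u_\varepsilon\sim$ order $1$ or smaller contributes only $o(\tilde\zeta_\varepsilon)$, which needs the sharp pointwise estimate from the Appendix; and (iii) keeping track of $N_\varepsilon\to+\infty$ so that $\xi_\varepsilon$ really is the right size and the tail $\varphi_{N_\varepsilon}(\gamma_\varepsilon^2)$ is negligible compared to $\exp(\gamma_\varepsilon^2)$ — this is where \eqref{NToInfty} in Case 1 is used, whereas in Case 2 ($N_\varepsilon\equiv1$) one has $\xi_\varepsilon=1/\gamma_\varepsilon^2$ exactly and the bookkeeping simplifies.
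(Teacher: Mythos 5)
Your overall architecture (blow-up at the maximum point, convergence to the Liouville bubble, Green's representation outside, matched asymptotics on an intermediate sphere, and a variational argument to place $\bar x$ in $K_\Omega$) is the right one, but there are three concrete problems. First, you have the key quantitative input of \textbf{(Case 1)} backwards: you write that one must check ``the tail $\varphi_{N_\varepsilon}(\gamma_\varepsilon^2)$ is negligible compared to $\exp(\gamma_\varepsilon^2)$'', whereas the proof requires exactly the opposite, namely \eqref{MinorPhiNEps}, $\liminf \varphi_{N_\varepsilon}(\gamma_\varepsilon^2)/\exp(\gamma_\varepsilon^2)>0$ (equivalently $\gamma_\varepsilon^2\gtrsim N_\varepsilon$ up to $\sqrt{N_\varepsilon}$, and eventually $\delta_\varepsilon\to 1$). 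This lower bound is what the hypothesis $\Lambda_g(\Omega)<\pi\exp(1+M)$ is used for (Step \ref{Step2}): without it the peak cannot carry the mass $\pi\exp(1+M)$ and the problem degenerates into a Lane--Emden regime. Relatedly, in \textbf{(Case 2)} one has $\xi_\varepsilon=1/(\exp(\gamma_\varepsilon^2)-1)$, which is exponentially small, not ``$1/\gamma_\varepsilon^2$ exactly''. Second, you skip the step that Remark \ref{RemLaneEmden} identifies as the real difficulty of \textbf{(Case 1)}: the nonlinearities $\Psi'_{N_\varepsilon}$ are not of uniform critical growth, so the ``standard argument'' does not yield global control, and one must first prove the weak global pointwise and gradient estimates of Step \ref{StPointwiseGlobalControl} on the set $\Omega_\varepsilon$ (by a contradiction/rescaling argument at a putative second concentration point). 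These estimates are a prerequisite for the radial comparison of Step \ref{StDruThi}; the Appendix's radial ODE analysis cannot by itself ``upgrade'' the local convergence.

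Third, your mechanism for \eqref{AGammaSmall1} (a Pohozaev-type balance whose ``total is forced to vanish'') is not how the cancellation is obtained, and as described it is incomplete. The energy identity $\|u_\varepsilon\|_{H^1_0}^2=\lambda_\varepsilon\int u_\varepsilon^2 H_{N_\varepsilon}(u_\varepsilon)e^{u_\varepsilon^2}$ together with $\|u_\varepsilon\|_{H^1_0}^2\le 4\pi$ only yields a one-sided bound (this is precisely the content of Remark \ref{OpenPbm6}). The paper instead pins down $\lambda_\varepsilon$ from two sides: an upper bound \eqref{ContrSupLambda} coming from the test-function lower bound $C_{g_{N},4\pi}\ge|\Omega|(1+g(0))+\pi\exp(1+M)$ of Step \ref{Step1} combined with $\int_{B_{x_\varepsilon}(\rho_\varepsilon)}\Psi_{N_\varepsilon}(u_\varepsilon)=4\pi(1+o(1))/(\gamma_\varepsilon^2\lambda_\varepsilon)$, and a matching lower bound from the Dirichlet-energy comparison with the harmonic extension $\chi_{\varepsilon,R}=4\pi\Lambda_{\varepsilon,R}G_{x_\varepsilon}$ in \eqref{Minor11}--\eqref{Minor12}. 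This squeeze gives \eqref{EqualLambdaEps} (and $\bar x\in K_\Omega$, $\delta_\varepsilon\to1$) with a genuinely multiplicative $o(1)$; then matching the interior expansion \eqref{ExpUInt} with the exterior expansion \eqref{CClPartGreensFun} on $\partial B_{x_\varepsilon}(\rho'_\varepsilon)$ produces the second formula \eqref{LambdaRelFirstLevel} for $\lambda_\varepsilon$, which contains $\exp(\gamma_\varepsilon^2(A(\gamma_\varepsilon)-2\xi_\varepsilon)/2)$, and comparing the two formulas forces $A(\gamma_\varepsilon)-2\xi_\varepsilon=o(\tilde\zeta_\varepsilon)$. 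You would need to supply the analogue of this two-sided determination of $\lambda_\varepsilon$; without it, no Pohozaev or energy identity alone can produce the asymptotic equality \eqref{AGammaSmall1}.
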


Observe that $N_\varepsilon=1$ in ({\bf Case 2}) reduces to say that $g_{N_\varepsilon}=g$. From \eqref{MinorPhiNEps} obtained in the process of the proof below, we get that $\xi_\varepsilon=o(1/\gamma_\varepsilon^2)$ in $(\text{\bf Case 2})$, so that \eqref{AGammaSmall1} is then equivalent to
\begin{equation}\label{AGammaSmall}
{A(\gamma_\varepsilon)}=o\left(\frac{1}{\gamma_\varepsilon^2} \right)\,,
\end{equation}
as discussed in Remark \ref{OpenPbm6}. 

\begin{proof}[Proof of Lemma \ref{LemBlowUpAnalysis}] We start by several basic steps. First, a test function computation gives the following result.
\begin{Step}\label{Step1}
For all $g$ such that \eqref{Propg} holds true, we have that
\begin{equation}\label{TestFunPrel}
C_{g,4\pi}(\Omega)\ge |\Omega|(1+g(0))+\pi \exp(1+M)\,,
\end{equation}
where $C_{g,4\pi}(\Omega)$ is as in $(I^g_{\alpha}(\Omega))$ $(\alpha=4\pi)$ and where $M$ is as in \eqref{MaxRobFun}.
\end{Step}

\begin{proof}[Proof of Step \ref{Step1}]
In order to get \eqref{TestFunPrel}, it is sufficient to prove that there exists functions ${f}_\varepsilon\in H^1_0$ such that $\|{f}_\varepsilon\|_{H^1_0}^2=4\pi$ and such that
\begin{equation}\label{TestFunToProve}
\int_\Omega \left(1+g({f}_\varepsilon) \right) \exp\left({f}_\varepsilon^2\right) ~dy\ge |\Omega|(1+g(0))+\pi \exp(1+M)+o(1)\,,
\end{equation}
as $\varepsilon\to 0$. In order to reuse these computations later, we fix any sequence $(z_\varepsilon)_\varepsilon$ of poins in $\Omega$ such that 
\begin{equation}\label{TechnInter}
\frac{\varepsilon^2}{d(z_\varepsilon,\partial\Omega)^2}=o\left({\left(\log\frac{1}{\varepsilon}\right)^{-1}} \right)\,.
\end{equation}
  For $0<\varepsilon<1$, we let $v_\varepsilon$ be given by $v_\varepsilon(y)=\log \frac{1}{\varepsilon^2+|y-z_\varepsilon|^2}+\mathcal{H}_{z_\varepsilon,\varepsilon}$, where $\mathcal{H}_{z_\varepsilon,\varepsilon}$ is harmonic in $\Omega$ and such that $v_\varepsilon$ is zero on $\partial \Omega$. Then, by the maximum principle and \eqref{GreenFun}, we have that
\begin{equation}\label{AFirstEstTestFun}
\mathcal{H}_{z_\varepsilon,\varepsilon}(y)=\mathcal{H}_{z_\varepsilon}(y)+O\left(\frac{\varepsilon^2}{d(z_\varepsilon,\partial\Omega)^2} \right)\text{ for all }y\in\Omega\,,
\end{equation}
where $\mathcal{H}_{z_\varepsilon}$ is as in \eqref{GreenFun}. Then, integrating by parts, we compute
\begin{equation}\label{ASecondTestFun}
\begin{split}
\|v_\varepsilon\|_{H^1_0}^2&=\int_\Omega v_\varepsilon \Delta v_\varepsilon~dy\,,\\
&=\int_\Omega \frac{4}{\varepsilon^2\left(1+\frac{|z_\varepsilon-y|^2}{\varepsilon^2} \right)^2}\left(\log\frac{1}{\varepsilon^2}+\log\frac{1}{1+\frac{|y-z_\varepsilon|^2}{\varepsilon^2}}+\mathcal{H}_{z_\varepsilon,\varepsilon}(y) \right)~dy\,,\\
&=4\pi\left(\log \frac{1}{\varepsilon^2}+o(1)\right)-4\pi\left(1+o\left(1 \right) \right)\\
&\quad\quad \quad +4\pi\left(\mathcal{H}_{z_\varepsilon}(z_\varepsilon)+o(1) \right)\,,\\
&=4\pi \left(\log \frac{1}{\varepsilon^2}-1+\mathcal{H}_{z_\varepsilon}(z_\varepsilon) \right)+o(1)\,,
\end{split}
\end{equation}
where the change of variable $z=|y-z_\varepsilon|/\varepsilon$, \eqref{TechnInter}, \eqref{AFirstEstTestFun} and 
\begin{equation}\label{Compl1}
\mathcal{H}_{z_\varepsilon}(z_\varepsilon+\varepsilon z)=\mathcal{H}_{z_\varepsilon}(z_\varepsilon)+O\left(\frac{\varepsilon|z|}{d(z_\varepsilon,\partial\Omega)}\right)\,,
\end{equation}
(see for instance Appendix B in \cite{DruThiI}) are used. Let ${f}_\varepsilon$ be given by $4\pi v_\varepsilon^2={f}_\varepsilon^2 \|v_\varepsilon\|_{H^1_0}^2$. We can write 
\begin{equation*}
f_\varepsilon(y)^2=\frac{\left(\log\frac{1}{|z_\varepsilon-y|^2+\varepsilon^2} \right)^2+2\mathcal{H}_{z_\varepsilon,\varepsilon}(y)\log\frac{1}{|z_\varepsilon-y|^2+\varepsilon^2}+\mathcal{H}_{z_\varepsilon,\varepsilon}(y)}{\log \frac{1}{\varepsilon^2}\left(1+\frac{\mathcal{H}_{z_\varepsilon}(z_\varepsilon)-1}{\log\frac{1}{\varepsilon^2}}+o\left(\frac{1}{\log \frac{1}{\varepsilon}}\right) \right)}
\end{equation*}
using \eqref{ASecondTestFun}. Then, writing $\log\frac{1}{|z_\varepsilon-y|^2+\varepsilon^2}=\log\frac{1}{\varepsilon^2}+\log
\frac{1}{1+\frac{|z_\varepsilon-y|^2}{\varepsilon^2}}$, we get
\begin{equation}\label{AThirdTestFunEst}
\begin{split}
&\int_{B_{z_\varepsilon}(\check{r}_\varepsilon)\cap \Omega} (1+g(f_\varepsilon))\exp(f_\varepsilon^2)~dy \\
&=\int_{B_{z_\varepsilon}(\check{r}_\varepsilon)\cap\Omega}(1+o(1))\frac{\exp\left(-2\check{t}_\varepsilon(y)+2\mathcal{H}_{z_\varepsilon,\varepsilon}(y)-\mathcal{H}_{z_\varepsilon}(z_\varepsilon)+1 \right)}{\varepsilon^2}\times\\
&~\exp\left(\frac{\check{t}_\varepsilon^2}{\log\frac{1}{\varepsilon^2}}+O\left(\frac{1+\check{t}_\varepsilon}{\log\frac{1}{\varepsilon^2}}+\frac{1+\check{t}_\varepsilon^2}{\left(\log\frac{1}{\varepsilon^2}\right)^2} \right) \right)~dy\\
&= \pi\exp(\mathcal{H}_{z_\varepsilon}(z_\varepsilon)+1)(1+o(1))
\end{split}
\end{equation}
as $\varepsilon\to 0$, using \eqref{Propg}, \eqref{AFirstEstTestFun} and \eqref{Compl1}, where $\check{t}_\varepsilon(y)=\log\left(1+\frac{|z_\varepsilon-y|^2}{\varepsilon^2}\right)$ and where $\check{r}_\varepsilon$ is given by $\check{t}_\varepsilon(\check{r}_\varepsilon)=\frac{1}{2}\log\frac{1}{\varepsilon^2}$. Now, we can check that 
\begin{equation*}
\begin{split}
f_\varepsilon(y)^2 &\le \left(\log \frac{1}{\varepsilon^2}+O(1)\right)^{-1}\left(\log \frac{1}{|z_\varepsilon-y|^2}+O(1)\right)^2\,,\\
&\le \left(\log\frac{1}{|z_\varepsilon-y|^2}+O(1)\right)\times \left(\frac{1}{2}+o(1)\right) \text{ for all }y\in \Omega\backslash B_x(\check{r}_\varepsilon)\,,
\end{split}
\end{equation*}
using \eqref{GreenFun}, \eqref{AFirstEstTestFun} and our definition of $\check{r}_\varepsilon$, so that we also get
\begin{equation}\label{AFourthTestFun}
\int_{B_{z_\varepsilon}(\Omega\backslash \check{r}_\varepsilon)} (1+g(f_\varepsilon))\exp(f_\varepsilon^2)~dy\to (1+g(0))|\Omega|
\end{equation}
as $\varepsilon\to 0$, by the dominated convergence theorem, using \eqref{Propg}. Property \eqref{TestFunToProve} and then Step \ref{Step1} follow from \eqref{AThirdTestFunEst} and \eqref{AFourthTestFun}, choosing $z_\varepsilon\in K_\Omega$ as in \eqref{MaxRobFun}.
\end{proof}
 From now on, we make the assumptions of Lemma \ref{LemBlowUpAnalysis}. In particular, we assume that either ({\bf Case 1}), or ({\bf Case 2}) holds true. Given an integer $N\ge 1$, observe that Step \ref{Step1} applies to $g_N$, since $g_N$ satisfies \eqref{Propg}, if $g$ does. {Then, using $\alpha_\varepsilon=4\pi$ in ({\bf Case 1}), or \eqref{PfThm1} and $g_{N_\varepsilon}=g$ in ({\bf Case 2}), we get that
\begin{equation}\label{TestFunPrelBis}
|\Omega|(1+g(0))+\pi \exp(1+M)\le 
\begin{cases}
&C_{g_{N_\varepsilon},4\pi}\quad\quad\quad\text{ in ({\bf Case 1})}\,,\\
&C_{g_{N_\varepsilon},\alpha_\varepsilon}+o(1)\text{ in ({\bf Case 2})}\,,
\end{cases}
\end{equation}
as $\varepsilon\to 0^+$, where $C_{g,\alpha}(\Omega)$ is as in formula $(I^g_{\alpha}(\Omega))$ and where $M$ is as in \eqref{MaxRobFun}.} Let us rewrite now \eqref{ELEqNEps} in a more convenient way. Let $\Psi_N$ be given by
\begin{equation}\label{DefPsiN}
\Psi_N(t)=(1+g_N(t))\exp(t^2)\,.
\end{equation}
Observe in particular that 
\begin{equation*}
(1+g(t))(1+t^2)\le \Psi_N(t)\le (1+g(t))\exp(t^2)\,,
\end{equation*}
for all $t$ and all $N$, by \eqref{Propg}. Using \eqref{DefH}, \eqref{Croissance} and \eqref{gN}, we may rewrite \eqref{ELEqNEps} as 
\begin{equation}\label{ELEqNEpsBis}
\begin{cases}
&\Delta u_\varepsilon=\frac{\lambda_\varepsilon}{2}\Psi'_{N_\varepsilon}(u_\varepsilon),\quad u_\varepsilon>0\text{ in }\Omega\,,\\
&u_\varepsilon=0\text{ on }\partial\Omega\,,
\end{cases}
\end{equation}
with 
\begin{equation}\label{PsiPrime}
\begin{split}
\Psi'_N(t) ~&=2t H(t)\left(1+t^2+\varphi_N(t^2) \right)+2t(1+g(t))\left(\frac{t^{2N}}{N!}-t^2 \right)\\
&=2t H(t)\varphi_N(t^2)+2t\left(1+\frac{t^{2N}}{N!}\right)(1+g(t))+g'(t)(1+t^2)\,.
\end{split}
\end{equation}
Indeed, in \eqref{ELEqNEps}, it turns out that
\begin{equation}\label{HNEpsFormula}
H_{N}(t)=\frac{\Psi'_{N}(t)\exp(-t^2)}{2t}\,.
\end{equation}
Observe that by \eqref{Propg} and \eqref{EqHBehavior}, using the first line of \eqref{PsiPrime}, we clearly have that there exists $C>0$ such that
\begin{equation}\label{RoughEstimPsiPrime}
|\Psi'_{N_\varepsilon}(t)|\le C t\exp(t^2)
\end{equation}
for all $t\ge 0$ and all $\varepsilon$. In ({\bf Case 2}), \eqref{WeakConvToZero} is assumed to be true. We prove now that \eqref{WeakConvToZero} also holds true in ({\bf Case 1}).
\begin{Step}\label{Step2}
Assume that we are in \textsc{(}{\bf Case 1}\textsc{)}. Then \eqref{WeakConvToZero} holds true. Moreover, we have that
\begin{equation}\label{MinorPhiNEps}
\liminf_{\varepsilon\to 0}\underset{:=\delta_\varepsilon\in(0,1)}{\underbrace{\frac{\varphi_{N_\varepsilon}\left(\gamma_\varepsilon^2\right)}{\exp\left(\gamma_\varepsilon^2\right)}}}>0 \,,
\end{equation}
and, in other words, that 
\begin{equation}\label{MinorGammaByNEps}
\liminf_{\varepsilon\to 0} \frac{\gamma_\varepsilon^2-N_\varepsilon}{\sqrt{N_\varepsilon}}>-\infty\,,
\end{equation}
where $\gamma_\varepsilon=\esssup u_\varepsilon$ and $\varphi_N$ is as in \eqref{DefPhiN}.
\end{Step}

\begin{proof}[Proof of Step \ref{Step2}] By \eqref{ExtremUEps} and \eqref{TestFunPrelBis}, we get that
\begin{equation}\label{1ST2}
\int_\Omega \Psi_{N_\varepsilon}(u_\varepsilon) dy\ge  (1+g(0))|\Omega|+\pi\exp(1+M)\,.
\end{equation}
Writing now $$\Psi_{N}(t)=(1+g(0))+\left((1+g(t))(1+t^2)-(1+g(0)) \right)+(1+g(t))\varphi_N(t^2)$$
and using \eqref{Propg}, we also get
\begin{equation}\label{2ST2}
\int_\Omega \Psi_{N_\varepsilon}(u_\varepsilon) dy\le (1+g(0))|\Omega|+\Lambda_g(\Omega)+\int_\Omega(1+g(u_\varepsilon))\varphi_{N_\varepsilon}(u_\varepsilon^2) dy
\end{equation}
where $\Lambda_g$ is as in \eqref{NonlinEigen}. Then by \eqref{Propg} and \eqref{AssumptCase1}, we get from \eqref{1ST2} and \eqref{2ST2} that
\begin{equation}\label{3ST2}
\liminf_{\varepsilon\to 0} \int_\Omega \varphi_{N_\varepsilon}(u_\varepsilon^2) dy>0\,.
\end{equation}
Up to a subsequence, $u_\varepsilon\rightharpoonup u_0$ in $H^1_0$, for some $u_0\in H^1_0$ such that $\|u_0\|_{H^1_0}^2\le 4\pi$. Let $0<\beta\ll 1$ be given. We have that
\begin{equation}\label{4ST2}
u_\varepsilon^2\le (1+\beta)(u_\varepsilon-u_0)^2+\left(1+\frac{1}{\beta} \right)u_0^2\,.
\end{equation}
Independently, by Moser-Trudinger's inequality, we have that
\begin{equation}\label{MTCons}
u\in H^1_0\implies \forall p\in[1,+\infty),\quad \exp(u^2)\in L^p\,.
\end{equation}
If $u_0\not \equiv 0$, $\lim_{\varepsilon\to 0}\|u_\varepsilon-u_0\|_{H^1_0}^2<4\pi$ and, by \eqref{4ST2}, \eqref{MTCons}, Moser's and Hölder's inequalities, there exists $p_0>1$ such that $(\exp(u_\varepsilon^2))_\varepsilon$ is bounded in $L^{p_0}$. Then, by standard integration theory, since $\varphi_{N_\varepsilon}\le \exp$ in $[0,+\infty)$ and since $N_\varepsilon\to +\infty$ in ({\bf Case 1}), we get
$$u_0\not \equiv 0\implies \int_\Omega \varphi_{N_\varepsilon}(u_\varepsilon^2) dy=o(1) $$
as $\varepsilon\to 0$, which proves \eqref{WeakConvToZero}, in view of \eqref{3ST2}. Noting that the function $t\mapsto\varphi_N(t)\exp(-t)$ increases in $[0,+\infty)$, we can write
$$\int_\Omega \varphi_{N_\varepsilon}(u_\varepsilon^2) dy\le \frac{\varphi_{N_\varepsilon}(\gamma_\varepsilon^2)}{\exp(\gamma_\varepsilon^2)}\int_\Omega \exp(u_\varepsilon^2) dy\,, $$
and conclude that \eqref{MinorPhiNEps} holds true by \eqref{3ST2} and Moser's inequality. Observe that 
\begin{equation}\label{FormulaPhi}
\varphi_N(\Gamma)=\exp(\Gamma) \int_0^{\Gamma} \exp(-s) \frac{s^N}{N!} ds\,.
\end{equation}
Setting $\Gamma=\gamma_\varepsilon^2, N=N_\varepsilon$ and $s=N_\varepsilon+u\sqrt{N_\varepsilon}$, we clearly get \eqref{MinorGammaByNEps} from \eqref{MinorPhiNEps}.
\end{proof}

The next steps applies in both ({\bf Case 1}) and ({\bf Case 2}).
\begin{Step}\label{StepELSatisfied}
We have that \eqref{SaturCond}, \eqref{ELEqNEps} hold true, and that $u_\varepsilon$ is in $C^{1,\theta}(\bar{\Omega})$.
\end{Step}
\begin{proof}[Proof of Step \ref{StepELSatisfied}]
Since $u_\varepsilon\in L^1$, note that $\tilde{\mu}_\varepsilon$ given by $$\tilde{\mu}_\varepsilon(t):=|\{x\in\Omega\text{ s.t. }u_\varepsilon(x)>t\}|$$ is continuous in $[0,\gamma_\varepsilon]$. By \eqref{ExtremUEps} and the considerations as in Lemma \ref{SubcriticalCase}, either \eqref{SaturCond} and \eqref{ELEqNEps} hold true, or $\Psi'_{N_\varepsilon}(u_\varepsilon)=0$ almost everywhere in $\Omega$. Then, if we assume by contradiction that this second alternative holds true, since $\Psi'_{N_\varepsilon}$ is continuous, we get that $\Psi'_{N_\varepsilon}=0$ in $[0,\gamma_\varepsilon]$.  Then, since $\Psi_{N_\varepsilon}(0)=1$, there must be the case that
\begin{equation}\label{InterStepSatisfied}
(1+g(t))=\frac{1}{1+t^2+\varphi_{N_\varepsilon}(t^2)}
\end{equation}
for all $t\in [0,\gamma_\varepsilon]$. Now we prove that 
\begin{equation}\label{GammaToInfty}
\gamma_\varepsilon\to +\infty\,.
\end{equation}  as $\varepsilon\to 0$. This is merely a consequence of Step \ref{Step2} in ({\bf Case 1}). In ({\bf Case 2}), \eqref{WeakConvToZero} is assumed. Thus, up to a subsequence, $u_\varepsilon\to 0$ a.e. and if we assume by contradiction that $\gamma_\varepsilon=O(1)$, we contradict \eqref{ExtremUEps} and \eqref{TestFunPrelBis} by the dominated convergence theorem. This concludes the proof of \eqref{GammaToInfty}. Then \eqref{InterStepSatisfied} contradicts that $g(t)\to 0$ as $t\to +\infty$ in \eqref{Propg}, which proves \eqref{SaturCond} and \eqref{ELEqNEps}. By \eqref{MTCons}, the regularity of $u_\varepsilon$ comes from \eqref{ELEqNEps} under its form \eqref{ELEqNEpsBis}, by \eqref{Propg}, \eqref{EqHBehavior}, \eqref{PsiPrime} and standard elliptic theory {(see for instance Gilbarg-Trudinger \cite{Gilbarg}).}
\end{proof}

\noindent The previous steps give in particular that \eqref{XEps} makes sense and holds true.

\begin{Step}\label{StLambdaPositive}
There holds that $\lambda_\varepsilon>0$ for all $0<\varepsilon\ll 1$. Moreover
\begin{equation}\label{LambdaTo0}
\lambda_\varepsilon\to 0\,,
\end{equation}
as $\varepsilon\to 0$, where $\lambda_\varepsilon$ is as in \eqref{ELEqNEps}.
\end{Step}

\begin{proof}[Proof of Step \ref{StLambdaPositive}]
By \eqref{ExtremUEps} and \eqref{TestFunPrelBis}, we have that
$$\liminf_{\varepsilon\to 0} \int_\Omega \Psi_{N_\varepsilon}(u_\varepsilon) dx>0\,, $$
so that, by \eqref{Propg}, \eqref{WeakConvToZero}, \eqref{EqHBehavior}, \eqref{DefPsiN}, \eqref{PsiPrime} and integration theory
\begin{equation*}\label{StLambdaPos1}
\liminf_{\varepsilon\to 0}\int_\Omega\left(\Psi'_{N_\varepsilon}(u_\varepsilon)+2(1+g(u_\varepsilon))u_\varepsilon^3 \right) u_\varepsilon dx=+\infty \,.
\end{equation*}
But by \eqref{Propg}, \eqref{WeakConvToZero} and Rellich-Kondrachov's theorem, we get that
\begin{equation*}\label{StLambdaPos2}
\int_\Omega(1+g(u_\varepsilon)u_\varepsilon^4 dx=o(1)\,.
\end{equation*}
Then, multiplying \eqref{ELEqNEpsBis} by $u_\varepsilon$ and integrating by parts, we get that $\lambda_\varepsilon>0$ and
$$4\pi+o(1)=\int_\Omega |\nabla u_\varepsilon|^2 dx\gg \lambda_\varepsilon\,, $$
which proves \eqref{LambdaTo0}.
\end{proof}

 Then, using \eqref{EqHBehavior}, we may let $\mu_\varepsilon>0$ be given by 
\begin{equation}\label{ScalRel}
\lambda_\varepsilon H(\gamma_\varepsilon) \mu_\varepsilon^2 \gamma_\varepsilon^2 \varphi_{N_\varepsilon-1}(\gamma_\varepsilon^2)=4\,,
\end{equation}
where $\varphi_N$ is as in \eqref{DefPhiN}. Before starting the core of the proof, we would like to make a parenthetical remark.

\begin{rem}\label{RemLaneEmden} Note that \textsc{(}{\bf Case 1}\textsc{)} is particularly delicate to handle, since the nonlinearities $(\Psi'_{N_\varepsilon})_\varepsilon$ are not of \textit{uniform critical growth}, even in the very general framework of \cite[Definition 1]{DruetDuke}. A more intuitive way to see this is the following: if $(\tilde{\gamma}_\varepsilon)_\varepsilon$ is a sequence of positive real numbers such that $\tilde{\gamma}_\varepsilon\to +\infty$, but not too fast, in the sense that $\tilde{\gamma}_\varepsilon^2\ll N_\varepsilon$, then it can be checked with \eqref{Propg} and \eqref{EqHBehavior} that
$$\frac{\lambda_\varepsilon}{2} \Psi'_{N_\varepsilon}(\tilde{\gamma}_\varepsilon)=\tilde{\lambda}_\varepsilon(1+o(1)){\tilde{\gamma}_\varepsilon^{2N_\varepsilon+1}}\,, $$
as $\varepsilon\to 0$, where $\tilde{\lambda}_\varepsilon=\lambda_\varepsilon/(N_\varepsilon!)$. Then, in the regime $0\le u_\varepsilon\le \tilde{\gamma}_\varepsilon$, at least formally, \eqref{ELEqNEpsBis} looks at first order like the Lane-Emden problem, namely
\begin{equation}\tag{Lane-Emden problem}
\begin{cases}
&\Delta u_\varepsilon=\tilde{\lambda}_\varepsilon u_\varepsilon^{2N_\varepsilon+1}, \quad u_\varepsilon>0\text{ in }\Omega\,,\\
& u_\varepsilon=0\text{ on }\partial\Omega\,,\\
& N_\varepsilon\to +\infty\,,
\end{cases}
\end{equation}
for which very interesting, but very different concentration phenomena were pointed out (see for instance \cites{AdimGrossi,DeMarchisIanniPacella,DeMarchisIanniPacellaSurvey,LaneEmdenEspMusPis,RenWei,RenWei2}). A real difficulty to conclude the subsequent proofs is to extend the analysis developed in \cite{AdimurthiDruet,DruetDuke,DruThiI} for the Moser-Trudinger "purely critical" regime, in order to deal also with such other intermediate regimes. As a last remark, a much simpler version of the techniques developed here permits also to answer some open questions about the Lane-Emden problem, as performed in \cite{DupThi}.
\end{rem}

\noindent We let $t_\varepsilon$ be given by
\begin{equation}\label{TEps}
t_\varepsilon(x)=\log\left(1+\frac{|x-x_\varepsilon|^2}{\mu_\varepsilon^2} \right)\,.
\end{equation}
Here and in the sequel, for a radially symmetric function $f$ around of $x_\varepsilon$ (resp. around $0$), we will often write $f(r)$ instead of $f(x)$ for $|x-x_\varepsilon|=r$ (resp. $|x|=r$).
\begin{Step}\label{StMinor}
We have that
\begin{equation}\label{FirstResc}
\gamma_\varepsilon\left(\gamma_\varepsilon-u_\varepsilon(x_\varepsilon-\mu_\varepsilon\cdot) \right)\to T_0:=\log\left(1+|\cdot|^2 \right)\text{ in }C^{1,\theta}_{loc}(\mathbb{R}^2)\,,
\end{equation}
where $\gamma_\varepsilon, x_\varepsilon$ are as in \eqref{XEps} and $\mu_\varepsilon$ is as in \eqref{ScalRel}. Moreover, we have that
\begin{equation}\label{MinorLambdaEps}
\liminf_{\varepsilon\to 0}\lambda_\varepsilon \gamma_\varepsilon^2>0\,.
\end{equation}
\end{Step}
\noindent At this stage, we can observe that 
\begin{equation}\label{EquivLog1surMu2}
\log\frac{1}{\mu_\varepsilon^2}=\gamma_\varepsilon^2(1+o(1))\,,
\end{equation}
as $\varepsilon\to 0$, by \eqref{EqHBehavior}, \eqref{MinorPhiNEps}, \eqref{LambdaTo0}, \eqref{ScalRel}, \eqref{MinorLambdaEps}.
\begin{proof}[Proof of Step \ref{StMinor}]
We first sketch the proof of \eqref{FirstResc}. In ({\bf Case 2}), \eqref{FirstResc} follows closely Step 1 of the proof of \cite[Proposition 1]{DruetDuke}. Thus, we focus now on the the proof of \eqref{FirstResc} in ({\bf Case 1}). 
Observe that
\begin{equation}\label{TechnicalRk}
\sup_{t\in\mathbb{R}} \frac{t^{2N}}{N!}\exp(-t^2)=\frac{N^N}{N!}\exp(-N)\underset{N\to +\infty}{=}\frac{1+o(1)}{\sqrt{2\pi N}}
\end{equation}
by Stirling's formula. Then, by \eqref{Propg}, \eqref{EqHBehavior}, \eqref{XEps}, \eqref{PsiPrime}, \eqref{MinorPhiNEps} and \eqref{GammaToInfty}, we have that
\begin{equation}\label{ContrSecdMbrTau}
\begin{split}
\frac{\Psi_{N_\varepsilon}'(u_\varepsilon)}{2}~&=u_\varepsilon H(u_\varepsilon) \varphi_{N_\varepsilon}(u_\varepsilon^2)+u_\varepsilon (1+g(u_\varepsilon))\frac{u_\varepsilon^{2N_\varepsilon}}{N_\varepsilon !}+O\left(\gamma_\varepsilon^3 \right)\\
&\le (1+o(1))\gamma_\varepsilon\varphi_{N_\varepsilon-1}(\gamma_\varepsilon^2)\,.
\end{split}
\end{equation}
Let $\tau_\varepsilon$ be given in $(\Omega-x_\varepsilon)/\mu_\varepsilon$ by
\begin{equation}\label{DefTau}
u_\varepsilon(x_\varepsilon+\mu_\varepsilon\cdot)=\gamma_\varepsilon-\frac{\tau_\varepsilon}{\gamma_\varepsilon}\,.
\end{equation}
Then, since $\Delta \tau_\varepsilon=-\mu_\varepsilon^2\gamma_\varepsilon (\Delta u_\varepsilon)(x_\varepsilon+\mu_\varepsilon\cdot)$, we get from \eqref{ELEqNEpsBis}, \eqref{ScalRel} and \eqref{ContrSecdMbrTau}, that there exists $C>0$ such that $|\Delta \tau_\varepsilon|\le C$, while $\tau_\varepsilon\ge 0$, $\tau_\varepsilon(0)=0$. As in \cite[p.231]{DruetDuke}, we have that $\mu_\varepsilon=o(d(x_\varepsilon,\partial\Omega))$. Then, by standard elliptic theory, there exists $\tau_0$ such that
\begin{equation}\label{ConvLocTau}
\tau_\varepsilon\to \tau_0\text{ in }C^{1,\theta}_{loc}(\mathbb{R}^2)\,,
\end{equation}
as $\varepsilon\to 0$. Note that for all $\Gamma,T>0$ and all $N$, we have that
\begin{equation}\label{AlgRelat}
\varphi_N(T)=\varphi_N(\Gamma) \exp\left(-(\Gamma-T) \right)-\exp(T)\int_T^\Gamma \exp(-s)\frac{s^N}{N!} ds\,.
\end{equation}
Writing the previous identity for $N=N_\varepsilon-1$, $\Gamma=\gamma_\varepsilon^2$ and $T=u_\varepsilon^2=\gamma_\varepsilon^2-2\tau_\varepsilon+\frac{\tau_\varepsilon^2}{\gamma_\varepsilon^2}$, noting from \eqref{TechnicalRk} and \eqref{ConvLocTau} that
$$\int_{u_\varepsilon^2}^{\gamma_\varepsilon^2} \exp(-s)\frac{s^{N_\varepsilon-1}}{(N_\varepsilon-1)!} ds =O\left(\frac{1}{\sqrt{N_\varepsilon}}\right) $$
in $\mathbb{R}^2_{loc}$ and resuming the arguments to get \eqref{ContrSecdMbrTau}, we get that
\begin{equation}\label{LimitEqTau}
\Delta(-\tau_0)=4\exp(-\tau_0)
\end{equation}
using also \eqref{ELEqNEpsBis}, \eqref{MinorPhiNEps}  and \eqref{ScalRel}. Now, choosing $R\gg 1$ such that $|g(t)|<1$ and $H(t)>0$ for all $t\ge R$, we easily see that
\begin{equation}\label{PropTau1}
u_\varepsilon \left[\Psi_{N_\varepsilon}'(u_\varepsilon)\right]^-\le 2 \|t\mapsto tH(t)\|_{L^\infty(0,R)}\exp(R^2)u_\varepsilon+4u_\varepsilon^4
\end{equation}
 by \eqref{Propg}, \eqref{EqHBehavior} and \eqref{PsiPrime}, where $t^-=-\min(t,0)$. Then, we have that
 \begin{equation}\label{PropTau2}
 \frac{\lambda_\varepsilon}{2} \int_\Omega u_\varepsilon \left[\Psi_{N_\varepsilon}'(u_\varepsilon)\right]^+ dy=4\pi +o(1)\,,
\end{equation} 
by \eqref{ExtremUEps}, \eqref{ELEqNEpsBis}, \eqref{LambdaTo0} and \eqref{PropTau1}, where $t^+=\max(t,0)$. Then, for all $A\gg 1$, we get that
$$ 4\int_{B_0(A)} \exp(-\tau_0) dy\le \liminf_{\varepsilon\to 0}\lambda_\varepsilon \int_\Omega u_\varepsilon \left[\Psi_{N_\varepsilon}'(u_\varepsilon)\right]^+ dy\,,  $$
by \eqref{ConvLocTau} and, since $A$ is arbitrary, we get from \eqref{PropTau2} that $\int_{\mathbb{R}^2} \exp(-\tau_0) dy<+\infty$. Then, by the classification result Chen-Li \cite{ChenLi}, since $\tau_0\ge 0$ and $\tau_0(0)=0$, we get that $\tau_0(y)=\log(1+|y|^2)$. Thus \eqref{FirstResc} is proved by \eqref{ConvLocTau}. Similarly, we may also choose some $A_\varepsilon$'s, such that $A_\varepsilon\to +\infty$ and such that
\begin{equation}\label{InterTau1}
\frac{\lambda_\varepsilon}{2} \int_{B_{x_\varepsilon}(A_\varepsilon\mu_\varepsilon)} \Psi_{N_\varepsilon}(u_\varepsilon) dy=\frac{4\pi+o(1)}{\gamma_\varepsilon^2}\,.
\end{equation}
Since $0<\Psi_{N_\varepsilon}(t)\le(1+g(t))\exp(t^2)$ for all $t\ge 0$, and since $C_{g,4\pi}(\Omega)<+\infty$, we get \eqref{MinorLambdaEps} from \eqref{Propg} and \eqref{InterTau1}. This concludes the proof of Step \ref{StMinor}.
\end{proof}

By Step \ref{StMinor} and estimates in its proof, since we assume $\|u_\varepsilon\|_{H^1_0}^2\le 4\pi$, we get that
\begin{equation}\label{NoOtherConcentrationPt}
\lim_{R\to +\infty} \lim_{\varepsilon\to 0} \int_{\Omega\backslash B_{x_\varepsilon}(R\mu_\varepsilon)} (\Delta u_\varepsilon(y))^+ u_\varepsilon ~dy=0\,.
\end{equation}
We let $\Omega_\varepsilon$ be given by
\begin{equation}\label{DefOmegaEps}
\Omega_\varepsilon=
\begin{cases}
&\left\{y\in\Omega\text{ s.t. }\varphi_{N_\varepsilon-1}(u_\varepsilon(y)^2)\ge u_\varepsilon(y)^2+1 \right\}\text{ in ({\bf Case 1})}\,,\\
&\Omega\text{ in ({\bf Case 2})}\,.
\end{cases}
\end{equation}
 Now, despite the difficulty pointed out in Remark \ref{RemLaneEmden}, we are able to get the following weak, but global pointwise estimates.  

\begin{Step}\label{StPointwiseGlobalControl}
There exists $C>0$ such that
\begin{equation}\label{WeakPointwiseEst}
|\cdot-x_\varepsilon|^2 |\Delta u_\varepsilon| u_\varepsilon\le C \text{ in }\Omega_\varepsilon
\end{equation}
and such that
\begin{equation}\label{WeakGradEst}
|\cdot-x_\varepsilon| |\nabla u_\varepsilon| u_\varepsilon\le C \text{ in }\Omega_\varepsilon
\end{equation}
for all $\varepsilon$, where $\Omega_\varepsilon$ is as in \eqref{DefOmegaEps}.
\end{Step}
In ({\bf Case 2}), it is not so difficult to adapt the arguments of \cite[§3,4]{DruetDuke} to get Step \ref{StPointwiseGlobalControl}. Thus, in the proof of Step \ref{StPointwiseGlobalControl} just below, we assume that we are in ({\bf Case 1}). Then observe that $\Omega_\varepsilon\neq \emptyset$ by Step \ref{Step2}. Given $\eta_0\in(0,1)$, writing $$\varphi_{N_\varepsilon-1}(t N_\varepsilon)=\frac{t^{N_\varepsilon}N_\varepsilon^{N_\varepsilon}(1+o(1))}{N_\varepsilon!}$$ for all $0<\varepsilon\ll 1$, uniformly in $|t|\le \eta_0$, the unique positive solution $\Gamma_\varepsilon$ of $\varphi_{N_\varepsilon-1}(\Gamma_\varepsilon)=\Gamma_\varepsilon+1$ satisfies $\Gamma_\varepsilon=(1+o(1))\frac{N_\varepsilon}{e}$. Then, since $\varphi_{N_\varepsilon-1}/(1+\cdot)$ increases in $(0,+\infty)$, we clearly get that
\begin{equation}\label{UInOmegaEps}
(1+o(1))\frac{N_\varepsilon}{e}\le \min_{\Omega_\varepsilon} u_\varepsilon^2\,.
\end{equation}
\begin{proof}[Proof of Step \ref{StPointwiseGlobalControl}, Formula \eqref{WeakPointwiseEst}] As aforementioned, we still assume that we are in ({\bf Case 1}). Thus, in particular, we assume that $N_\varepsilon\to +\infty$ as $\varepsilon\to 0$. Assume now by contradiction that 
\begin{equation}\label{1WP}
\max_{y\in \Omega_\varepsilon} |y-x_\varepsilon|^2 |\Delta u_\varepsilon(y)| u_\varepsilon(y)=|y_\varepsilon-x_\varepsilon|^2 |\Delta u_\varepsilon(y_\varepsilon)| u_\varepsilon(y_\varepsilon)\to +\infty
\end{equation}
as $\varepsilon\to 0$, for some $y_\varepsilon$'s such that $y_\varepsilon\in\Omega_\varepsilon$. First for all sequence $(\check{z}_\varepsilon)_\varepsilon$ such that $\check{z}_\varepsilon\in \Omega_\varepsilon$, we have that $\Delta u_\varepsilon(\check{z}_\varepsilon)>0$, that $g'(u_\varepsilon(\check{z}_\varepsilon))=o(u_\varepsilon(\check{z}_\varepsilon))$ and that
\begin{equation}\label{EquivPsiN}
\Psi'_{N_\varepsilon}(u_\varepsilon(\check{z}_\varepsilon))=(1+o(1)) 2 u_\varepsilon(\check{z}_\varepsilon) \varphi_{N_\varepsilon-1}(u_\varepsilon(\check{z}_\varepsilon)^2)\,,
\end{equation}
as $\varepsilon\to 0$, using \eqref{Propg}, \eqref{InftyBehavior}, \eqref{EqHBehavior}, \eqref{PsiPrime} and \eqref{UInOmegaEps}. Besides, we have that
\begin{equation}\label{2WP}
u_\varepsilon(y_\varepsilon)\to +\infty\,,
\end{equation}
as $\varepsilon\to 0$. Let $\nu_\varepsilon>0$ be given by
\begin{equation}\label{5WP}
\nu_\varepsilon^2 |\Delta u_\varepsilon(y_\varepsilon)|u_\varepsilon(y_\varepsilon)=1\,.
\end{equation}
Then, in view of \eqref{1WP} and \eqref{5WP}, we have that
\begin{equation}\label{4WP}
\lim_{\varepsilon\to 0} \frac{|y_\varepsilon-x_\varepsilon|}{\nu_\varepsilon}=+\infty\,,
\end{equation}
and, in view of Step \ref{StMinor}, that
\begin{equation}\label{3WP}
\lim_{\varepsilon\to 0} \frac{|y_\varepsilon-x_\varepsilon|}{\mu_\varepsilon}=+\infty\,.
\end{equation}
For $R>0$, we set $\Omega_{R,\varepsilon}=B_{y_\varepsilon}(R\nu_\varepsilon)\cap \Omega$ and $\tilde{\Omega}_{R,\varepsilon}=(\Omega_{R,\varepsilon}-y_\varepsilon)/\nu_\varepsilon$. Up to harmless rotations and since $\Omega$ is smooth, we may assume that there exists $B\in [0,+\infty]$ such that $\tilde{\Omega}_{0,R} \to (-\infty,B)\times \mathbb{R}$ as $R\to+\infty$, where $\tilde{\Omega}_{\varepsilon,R}\to \tilde{\Omega}_{0,R}$ as $\varepsilon\to 0$. In this proof, for $z\in \tilde{\Omega}_{R,\varepsilon}$, we write $z_\varepsilon=y_\varepsilon+\nu_\varepsilon z\in \Omega_{R,\varepsilon}$. Let $\tilde{u}_\varepsilon$ be given by 
\begin{equation}\label{6WP}
\tilde{u}_\varepsilon(z)=u_\varepsilon(y_\varepsilon)\left(u_\varepsilon(z_\varepsilon)-u_\varepsilon(y_\varepsilon) \right)\,,
\end{equation}
so that we get
\begin{equation}\label{7WP}
\left(\Delta \tilde{u}_\varepsilon\right)(z)=\frac{(\Delta u_\varepsilon)(z_\varepsilon)}{(\Delta u_\varepsilon)(y_\varepsilon)}=\frac{\Psi'_{N_\varepsilon}(z_\varepsilon)}{\Psi'_{N_\varepsilon}(y_\varepsilon)}\,.
\end{equation}
First, we prove that for all $R>0$, there exists $C_R>0$ such that
\begin{equation}\label{BdLapl}
|\Delta \tilde{u}_\varepsilon|\le C_R\text{ in }\tilde{\Omega}_{R,\varepsilon}\,,
\end{equation}
for all $0<\varepsilon\ll 1$. Otherwise, by \eqref{7WP}, assume by contradiction that there exists $z_\varepsilon\in {\Omega}_{R,\varepsilon}$ such that 
\begin{equation}\label{ContradWP}
|\Psi'_{N_\varepsilon}(z_\varepsilon)|\gg \Psi'_{N_\varepsilon}(y_\varepsilon)
\end{equation}
 as $\varepsilon\to 0$. If, still by contradiction, $z_\varepsilon\not\in \Omega_\varepsilon$, we have that $u_\varepsilon(z_\varepsilon)<u_\varepsilon(y_\varepsilon)$, that
$$\varphi_{N_\varepsilon-1}(u_\varepsilon(z_\varepsilon)^2)< \varphi_{N_\varepsilon-1}(u_\varepsilon(y_\varepsilon)^2)\,, $$
by definition of $\Omega_\varepsilon$ and since $\varphi_N/(1+\cdot)$ increases in $[0,+\infty)$, and then that
$$|\Psi'_{N_\varepsilon}(u_\varepsilon(z_\varepsilon))|\lesssim u_\varepsilon(z_\varepsilon)\left(1+u_\varepsilon(z_\varepsilon)^2+\varphi_{N_\varepsilon-1}(u_\varepsilon(z_\varepsilon)^2)  \right)\lesssim \Psi'_{N_\varepsilon}(u_\varepsilon(y_\varepsilon)) \,,$$
using \eqref{Propg}, \eqref{EqHBehavior}, \eqref{PsiPrime}, \eqref{EquivPsiN} and $y_\varepsilon\in\Omega_\varepsilon$ again. This contradicts \eqref{ContradWP} and then it must be the case that $z_\varepsilon\in \Omega_\varepsilon$. Thus, since $y_\varepsilon$ is a maximizer on $\Omega_\varepsilon$ in \eqref{1WP}, we get from \eqref{4WP} and \eqref{ContradWP} that $u_\varepsilon(z_\varepsilon)\ll u_\varepsilon(y_\varepsilon)$. But this is not possible by \eqref{EquivPsiN} and \eqref{ContradWP}, which proves \eqref{BdLapl}. Now we prove that, for all $R>0$,
\begin{equation}\label{TildeUNeg}
\limsup_{\varepsilon\to 0} \sup_{z\in \tilde{\Omega}_{R,\varepsilon}} \tilde{u}_\varepsilon(z)\le 0\,.
\end{equation}
Until the end of this proof, we set $\tilde{\gamma}_\varepsilon:=u_\varepsilon(y_\varepsilon)$. If \eqref{TildeUNeg} does not hold true, since $\tilde{u}_\varepsilon(0)=0$ and by continuity, we may assume that there exist $z_\varepsilon\in \Omega_{R,\varepsilon}$ such that
\begin{equation}\label{ContradWP2}
\beta_\varepsilon:= \left[\tilde{\gamma}_\varepsilon\left(u_\varepsilon(z_\varepsilon)-\tilde{\gamma}_\varepsilon \right) \right]\to\beta_0\in(0,+\infty)\,,
\end{equation}
as $\varepsilon\to 0$. Since $u_\varepsilon(z_\varepsilon)>u_\varepsilon(y_\varepsilon)$ for $0<\varepsilon\ll 1$ by \eqref{ContradWP2}, we have that $z_\varepsilon\in \Omega_\varepsilon$. Moreover, since $y_\varepsilon$ is maximizing in \eqref{1WP}, we then get from \eqref{EquivPsiN}, \eqref{2WP} and \eqref{4WP} that
\begin{equation}\label{InterWP}
\varphi_{N_\varepsilon-1}(u_\varepsilon(z_\varepsilon)^2)\le (1+o(1))~\varphi_{N_\varepsilon-1}(\tilde{\gamma}_\varepsilon^2)\,.
\end{equation}
Independently, since $\varphi_N$ is convex, we get that
\begin{equation}\label{SuiteWP}
\begin{split}
\varphi_{N_\varepsilon-1}(u_\varepsilon(z_\varepsilon)^2)&\ge \varphi_{N_\varepsilon-1}(\tilde{\gamma}_\varepsilon^2)+\varphi_{N_\varepsilon-1}'(\tilde{\gamma}_\varepsilon^2)\left(u_\varepsilon(z_\varepsilon)^2-\tilde{\gamma}_\varepsilon^2 \right)\,,\\
&\ge \left(1+2\beta_0(1+o(1))\right) \varphi_{N_\varepsilon-1}(\tilde{\gamma}_\varepsilon^2)\,,
\end{split}
\end{equation}
using \eqref{ContradWP2} and $\varphi_N'(t)\ge \varphi_N(t)$ for $t\ge 0$. But \eqref{ContradWP2}-\eqref{SuiteWP} cannot hold true simultaneously, which proves \eqref{TildeUNeg}. As in \cite[p.231]{DruetDuke}, $\tilde{u}_\varepsilon(0)=0$, $u_\varepsilon=0$ on $\partial\Omega$, \eqref{BdLapl} and \eqref{TildeUNeg} imply that 
\begin{equation}\label{WPFarBdry}
\lim_{\varepsilon\to 0}\frac{d(y_\varepsilon,\partial\Omega)}{\nu_\varepsilon}=+\infty.
\end{equation}
Moreover, by standard elliptic theory, $\tilde{u}_\varepsilon(0)=0$, \eqref{BdLapl}, \eqref{TildeUNeg} and \eqref{WPFarBdry} give that 
\begin{equation}\label{LocConvWP}
\tilde{u}_\varepsilon\to u_0\text{ in }C^1_{loc}(\mathbb{R}^2)\,,
\end{equation}
as $\varepsilon\to 0$, for some $u_0\in C^1(\mathbb{R}^2)$. Given $R>0$, we prove now that
\begin{equation}\label{MinorLaplWP}
\liminf_{\varepsilon\to 0}\inf_{z\in\tilde{\Omega}_{R,\varepsilon}} (\Delta\tilde{u}_\varepsilon)(z) >0\,.
\end{equation}
Using \eqref{PsiPrime}, \eqref{2WP} and \eqref{LocConvWP}, we have that 
$$\Psi'_{N_\varepsilon}(u_\varepsilon)=2\tilde{\gamma}_\varepsilon \varphi_{N_\varepsilon-1}(u_\varepsilon^2)(1+o(1))+o(\tilde{\gamma}_\varepsilon^3)\,, $$
uniformly in $\Omega_{R,\varepsilon}$. Then, coming back to \eqref{7WP}, using \eqref{EquivPsiN} and $y_\varepsilon\in\Omega_\varepsilon$, we get that
$$(\Delta \tilde{u}_\varepsilon)(z)=(1+o(1))\frac{\varphi_{N_\varepsilon-1}(u_\varepsilon(z_\varepsilon)^2)}{\varphi_{N_\varepsilon-1}(\tilde{\gamma}_\varepsilon^2)}+o(1)\,, $$
uniformly in $z\in \tilde{\Omega}_{R,\varepsilon}$. Now, we write \eqref{AlgRelat} with $\Gamma=\tilde{\gamma}_\varepsilon^2$ and $T=u_\varepsilon^2$. Then, in order to conclude the proof of \eqref{MinorLaplWP}, using also \eqref{FormulaPhi}, it is sufficient to check that there exists $\eta_R<1$ such that
\begin{equation}\label{TechnicalRk2}
\begin{split}
I_\varepsilon:=\frac{\exp(u_\varepsilon^2)}{\varphi_{\tilde{N}_\varepsilon}(\tilde{\gamma}_\varepsilon^2)\exp\left(-\left(\tilde{\gamma}_\varepsilon^2-u_\varepsilon^2 \right) \right)}\int_{u_\varepsilon^2}^{\tilde{\gamma}_\varepsilon^2} \exp(-s) \frac{s^{\tilde{N}_\varepsilon}}{\tilde{N}_\varepsilon !} ds&=\frac{\int_{u_\varepsilon^2}^{\tilde{\gamma}_\varepsilon^2} \exp(-s) \frac{s^{\tilde{N}_\varepsilon}}{\tilde{N}_\varepsilon !} ds}{\int_{0}^{\tilde{\gamma}_\varepsilon^2} \exp(-s) \frac{s^{\tilde{N}_\varepsilon}}{\tilde{N}_\varepsilon !} ds}\,,\\
&\le \eta_R\,,
\end{split}
\end{equation}
for all $0<\varepsilon\ll 1$, uniformly in ${\Omega}_{R,\varepsilon}$, where $\tilde{N}_\varepsilon=N_\varepsilon-1$. If $u_\varepsilon\ge \tilde{\gamma}_\varepsilon$, the last inequality in \eqref{TechnicalRk2} is obvious. If now $u_\varepsilon< \tilde{\gamma}_\varepsilon$, we write
\begin{equation}\label{TechnicalRk3}
\begin{split}
I_\varepsilon &\le\frac{\int_{u_\varepsilon^2-\tilde{\gamma}_\varepsilon^2}^0 \exp(-t) \left(1+\frac{t}{\tilde{\gamma}_\varepsilon^2} \right)^{\tilde{N}_\varepsilon} dt}{\int_{2(u_\varepsilon^2-\tilde{\gamma}_\varepsilon^2)}^0 \exp(-t) \left(1+\frac{t}{\tilde{\gamma}_\varepsilon^2} \right)^{\tilde{N}_\varepsilon} dt}\\
&\le \frac{\int_{u_\varepsilon^2-\tilde{\gamma}_\varepsilon^2}^0 \exp\left(t\left(\frac{\tilde{N}_\varepsilon}{\tilde{\gamma}_\varepsilon^2}-1+O\left(\frac{\tilde{N}_\varepsilon t^2}{\tilde{\gamma}_\varepsilon^4} \right) \right)\right) dt}{\int_{2(u_\varepsilon^2-\tilde{\gamma}_\varepsilon^2)}^0 \exp\left(t\left(\frac{\tilde{N}_\varepsilon}{\tilde{\gamma}_\varepsilon^2}-1+O\left(\frac{\tilde{N}_\varepsilon t^2}{\tilde{\gamma}_\varepsilon^4} \right) \right)\right) dt}\\
&\le \eta_R
\end{split}
\end{equation}
using \eqref{LocConvWP}, where $I_\varepsilon$ is as in \eqref{TechnicalRk2}. We get the last inequality using \eqref{UInOmegaEps} and $y_\varepsilon\in\Omega_\varepsilon$: \eqref{TechnicalRk2} and then \eqref{MinorLaplWP} are proved in any case. But \eqref{2WP}, \eqref{4WP}, \eqref{3WP}, \eqref{LocConvWP} and \eqref{MinorLaplWP} clearly contradict \eqref{NoOtherConcentrationPt}, which concludes the proof of \eqref{WeakPointwiseEst}.
\end{proof}

\begin{proof}[Proof of Step \ref{StPointwiseGlobalControl}, Formula \eqref{WeakGradEst}]
Remember that we assume that $(\text{\bf Case 1})$ holds true. Assume then by contradiction that there exists $(y_\varepsilon)_\varepsilon$ such that $y_\varepsilon\in \Omega_\varepsilon$ and 
\begin{equation}\label{GW1}
\max_{y\in \Omega_\varepsilon} |y-x_\varepsilon| |\nabla u_\varepsilon(y)| u_\varepsilon(y)=|y_\varepsilon-x_\varepsilon| |\nabla u_\varepsilon(y_\varepsilon)| u_\varepsilon(y_\varepsilon):=C_\varepsilon\to +\infty
\end{equation}
as $\varepsilon\to 0$. Then, by \eqref{UInOmegaEps}, \eqref{2WP} holds true. Let $\nu_\varepsilon>0$ be given by
\begin{equation}\label{GW2}
\nu_\varepsilon =\min\left(|x_\varepsilon-y_\varepsilon|,d(y_\varepsilon,\partial\Omega)\right)\,.
\end{equation}
For all $R>1$ and all $\varepsilon$, we let $\Omega_{R,\varepsilon}$ and $\tilde{\Omega}_{R,\varepsilon}$ be given by the formulas above \eqref{6WP}. Let $w_\varepsilon$ be given by
\begin{equation}\label{DefWGW}
w_\varepsilon(z)=u_\varepsilon(y_\varepsilon+\nu_\varepsilon z).
\end{equation}
Since $\|u_\varepsilon\|_{H^1_0}^2\le 4\pi$, we get from Moser's inequality that $\int_\Omega \exp(u_\varepsilon^2) dy=O(1)$ and then that, for all given $p\ge 1$,
\begin{equation}\label{GW5}
\|\nu_\varepsilon^{2/p}w_\varepsilon\|_{L^p(\tilde{\Omega}_{R,\varepsilon})}=O(1)\,,
\end{equation}
for all $\varepsilon$. Now, for any given $R>1$ and all sequence $(z_\varepsilon)_\varepsilon$ such that $z_\varepsilon\in \Omega_{R,\varepsilon}\backslash\{x_\varepsilon\}$ (i.e. $\tilde{z}_\varepsilon:=(z_\varepsilon-y_\varepsilon)/\nu_\varepsilon\in \tilde{\Omega}_{R,\varepsilon}\backslash\{\tilde{x}_\varepsilon\}$), we get that
$$|\Delta w_\varepsilon(\tilde{z}_\varepsilon)|=\nu_\varepsilon^2 |\Delta u_\varepsilon(z_\varepsilon)|\lesssim
\begin{cases}
\frac{1}{u_\varepsilon(z_\varepsilon)|\tilde{z}_\varepsilon-\tilde{x}_\varepsilon|^2}\text{ if }z_\varepsilon\in \Omega_\varepsilon\,,\\
\lambda_\varepsilon \nu_\varepsilon^2 |\Psi'_{N_\varepsilon}(u_\varepsilon(z_\varepsilon))|=O\left(\lambda_\varepsilon \nu_\varepsilon^2 (1+u_\varepsilon(z_\varepsilon)^3) \right)\text{ if }z_\varepsilon\not \in \Omega_\varepsilon\,,
\end{cases}
 $$
 using \eqref{WeakPointwiseEst} for the first line, and \eqref{PsiPrime} for the second one. Then, using either \eqref{UInOmegaEps} or \eqref{LambdaTo0} with \eqref{GW5}, we get that
 \begin{equation}\label{LaplTo0GW}
 \|\Delta w_\varepsilon\|_{L^p(\tilde{\Omega}_{R,\varepsilon}\backslash B_{\tilde{x}_\varepsilon}(1/R))}\to 0
 \end{equation}
 as $\varepsilon\to 0$. Independently, since $\|u_\varepsilon\|_{H^1_0}=O(1)$, we easy get that
 \begin{equation}\label{H1LOcBdGW}
 \int_{\tilde{\Omega}_{R,\varepsilon}}|\nabla w_\varepsilon|^2 dz=O(1)\,.
 \end{equation}
 Set $\tilde{x}_\varepsilon=\frac{x_\varepsilon-y_\varepsilon}{\nu_\varepsilon}$. Observe that $|\tilde{x}_\varepsilon|\ge 1$. Now, we claim that up to a subsequence,
 \begin{equation}\label{EstimNuGW}
 \nu_\varepsilon\to 0\text{ and }\frac{d(y_\varepsilon,\partial\Omega)}{|x_\varepsilon-y_\varepsilon|}\to +\infty\,,
 \end{equation}
 as $\varepsilon\to 0$. In particular, by \eqref{GW2}, this implies that $\nu_\varepsilon=|x_\varepsilon-y_\varepsilon|$. Now we prove \eqref{EstimNuGW}. Indeed, if we assume by contradiction that \eqref{EstimNuGW} does not hold, for all $R\gg 1$ sufficiently large, we get that the $(w_\varepsilon/u_\varepsilon(y_\varepsilon))$'s converge locally out of $B_{\tilde{x}_\varepsilon}(1/2)$ to some $C^1$ function which is $1$ at $0$ and $0$ on the non-empty and smooth boundary of $\lim_{R\to+\infty}\lim_{\varepsilon\to 0} \tilde{\Omega}_{R,\varepsilon}$ (maybe after a harmless rotation). We use here the Harnack inequality and elliptic theory with \eqref{2WP}, \eqref{LaplTo0GW} (with $p>2$) and \eqref{H1LOcBdGW}, since $u_\varepsilon=0$ in $\partial\Omega$. This clearly contradicts \eqref{H1LOcBdGW} and \eqref{EstimNuGW} is proved. Up to a subsequence, we may now assume that
\begin{equation}\label{GW2Bis}
\tilde{x}_\varepsilon\to \tilde{x},~~|\tilde{x}|=1\,,
\end{equation}
as $\varepsilon\to 0$. By \eqref{2WP}, \eqref{LaplTo0GW}, \eqref{H1LOcBdGW}, and similar arguments including again Harnack's principle, we get that 
 \begin{equation}\label{ComplHarnGW}
 \frac{w_\varepsilon}{u_\varepsilon(y_\varepsilon)}\to 1\text{ in }C^1_{loc}(\mathbb{R}^2\backslash \{\tilde{x}\})\,,
 \end{equation}
 using also \eqref{EstimNuGW}. By \eqref{GW5} and \eqref{ComplHarnGW}, we get that for all $p\ge 1$
 \begin{equation}\label{UnifOutOfOEpsGW}
 \nu_\varepsilon^{2/p} u_\varepsilon(y_\varepsilon)=O(1)\,,
 \end{equation}
 as $\varepsilon\to 0$. Let now $\tilde{w}_\varepsilon$ be given by $\tilde{w}_\varepsilon=\frac{w_\varepsilon-w_\varepsilon(0)}{\nu_\varepsilon |\nabla u_\varepsilon(y_\varepsilon)|}$, so that $|\nabla \tilde{w}_\varepsilon(0)|=1$. For any given $R>1$ and all sequence $(z_\varepsilon)_\varepsilon$ such that $\tilde{z}_\varepsilon:=(z_\varepsilon-y_\varepsilon)/\nu_\varepsilon\in \tilde{\Omega}_{R,\varepsilon}\backslash B_{\tilde{x}}(1/R)$, we get that
\begin{equation*} 
 |\Delta\tilde{w}_\varepsilon(\tilde{z}_\varepsilon)|=\frac{u_\varepsilon(y_\varepsilon)}{C_\varepsilon}|\Delta w_\varepsilon(\tilde{z}_\varepsilon)|\lesssim 
 \begin{cases}
 \frac{1}{C_\varepsilon |\tilde{z}_\varepsilon-\tilde{x}_\varepsilon|^2}\text{ if }z_\varepsilon\in \Omega_\varepsilon\,,\\
\frac{\lambda_\varepsilon}{C_\varepsilon} \nu_\varepsilon^2 u_\varepsilon(y_\varepsilon)^4 \text{ if }z_\varepsilon\not \in \Omega_\varepsilon\,,
 \end{cases}
\end{equation*}
for all $\varepsilon$, using \eqref{WeakPointwiseEst}, \eqref{GW1} and \eqref{ComplHarnGW}. Then, by \eqref{LambdaTo0}, \eqref{GW1}, \eqref{EstimNuGW} and \eqref{UnifOutOfOEpsGW} (with $p\ge 4$), we get that
 \begin{equation}\label{LaplTo0GW2}
 \Delta \tilde{w}_\varepsilon\to 0 \text{ in }L^\infty_{loc}(\mathbb{R}^2\backslash \{\tilde{x}\})\,,
 \end{equation}
 as $\varepsilon\to 0$. By \eqref{GW1}, \eqref{GW2Bis} and \eqref{ComplHarnGW}, given $R>1$ and $\tilde{z}_\varepsilon\in \tilde{\Omega}_{R,\varepsilon}\backslash B_{\tilde{x}}(1/R)$, we get that
 \begin{equation}\label{GradEstGW}
 |\nabla \tilde{w}_\varepsilon(\tilde{z}_\varepsilon)|=\frac{|\nabla u_\varepsilon(z_\varepsilon)|}{|\nabla u_\varepsilon(y_\varepsilon)|}\le \frac{u_\varepsilon(y_\varepsilon)}{u_\varepsilon(z_\varepsilon)}\frac{1}{|\tilde{x}_\varepsilon-\tilde{z}_\varepsilon|}\le \frac{1+o(1)}{|\tilde{x}_\varepsilon-\tilde{z}_\varepsilon|}
 \end{equation}
for all $0<\varepsilon\ll 1$. Then, by \eqref{LaplTo0GW2}, \eqref{GradEstGW} and since $\tilde{w}_\varepsilon(0)=0$, there exists a harmonic function $\mathcal{H}$ in $\mathbb{R}^2\backslash \{\tilde{x}\}$ such that $\lim_{\varepsilon\to 0}\tilde{w}_\varepsilon=\mathcal{H}\text{ in }C^1_{loc}(\mathbb{R}^2\backslash \{\tilde{x}\})\,.$ Now, for all given $\beta>0$, integrating by parts, we get that
\begin{equation}\label{CClGW}
\begin{split}
&\int_{\partial B_{x_\varepsilon}(\beta \nu_\varepsilon)} u_\varepsilon \partial_\nu u_\varepsilon d\sigma\\
&=O\left(\int_{\Omega}|\nabla u_\varepsilon|^2 dy\right)+O\left(\int_{\Omega}u_\varepsilon (\Delta u_\varepsilon)^+ dy\right)=O(1)\,,\\
&=C_\varepsilon\left(\int_{\partial B_{\tilde{x}}(\beta)} \partial_\nu \mathcal{H} d\sigma+o(1) \right)\,,
\end{split}
\end{equation}
using \eqref{GW1} and \eqref{ComplHarnGW}, as $\varepsilon\to 0$. Since $C_\varepsilon\to +\infty$, we get from \eqref{CClGW} that $\int_{\partial B_{\tilde{x}}(\beta)} \partial_\nu \mathcal{H} d\sigma=0$. Then, also by \eqref{GradEstGW}, $\beta$ being arbitrary, $\mathcal{H}$ is bounded around $\tilde{x}$ and then the singularity at $\tilde{x}$ is removable. By the Liouville theorem, $\mathcal{H}$ is constant in $\mathbb{R}^2$, which is not possible since $|\nabla \tilde{w}_\varepsilon(0)|=|\nabla \mathcal{H}(0)|=1$. This concludes the proof of \eqref{WeakGradEst}.

\end{proof}

\begin{rem}\label{RemBdry}
Note that we do not assume that the continuous function $\Psi'_{N_\varepsilon}$ is positive and increasing in $[0,+\infty)$. Then, standard moving plane techniques \cites{AdimurthiDruet,GidasNiNiremb,Han,deFigLions} do not apply. We use in the proof below the variational characterization \eqref{ExtremUEps} of the $u_\varepsilon$'s to get that $\bar{x}\in K_\Omega$, $K_\Omega$ as in \eqref{MaxRobFun}, and that, in particular, $\bar{x}\not \in\partial \Omega$ in \eqref{XNotToBDry}.
\end{rem}

 Let $B_{\varepsilon}$ be the radial solution around $x_\varepsilon$ of
\begin{equation}\label{B1Eps}
\begin{cases}
&\Delta B_{\varepsilon}=\frac{\lambda_\varepsilon}{2} \Psi'_{N_\varepsilon}(B_\varepsilon)\,,\\
&B_\varepsilon(x_\varepsilon)=\gamma_\varepsilon\,,
\end{cases}
\end{equation}
where $\gamma_\varepsilon$ is still given by \eqref{XEps}. Let $\bar{u}_\varepsilon$ be given by 
\begin{equation}\label{UBar}
\bar{u}_\varepsilon(z)=\frac{1}{2\pi |x_\varepsilon-z|}\int_{\partial B_{x_\varepsilon}(|x_\varepsilon-z|)} u_\varepsilon~d\sigma\,,
\end{equation}
for all $z\neq x_\varepsilon$ and $\bar{u}_\varepsilon(x_\varepsilon)=u_\varepsilon(x_\varepsilon)=\gamma_\varepsilon$. Let $\varepsilon_0\in(\sqrt{1/e},1)$ be given. Let $\rho_{\varepsilon}>0$ be given by
\begin{equation}\label{RhoEpsDelta}
t_\varepsilon(\rho_{\varepsilon})=(1-\varepsilon_0) \gamma_\varepsilon^2\,.
\end{equation}
By \eqref{EqHBehavior}, \eqref{LambdaTo0}, \eqref{ScalRel} and \eqref{MinorLambdaEps}, we have that
\begin{equation}\label{RhoEpsDeltaEst}
\rho_{\varepsilon}^2=\exp(-(\varepsilon_0+o(1))\gamma_\varepsilon^2)\,.
\end{equation}
Let $r_\varepsilon$ be given by
\begin{equation}\label{DefREpsLem}
r_\varepsilon=\sup\left\{r\in(0,\rho_\varepsilon] \text{ s.t. }|\bar{u}_\varepsilon-B_\varepsilon|\le \frac{1}{\gamma_\varepsilon}\text{ in }B_{x_\varepsilon}(r)\right\}\,.
\end{equation} 
Observe that $r_\varepsilon\gg \mu_\varepsilon$ by Step \ref{StMinor} and Appendix \ref{Appendix}. Then, we state the following key result.
\begin{Step}\label{StDruThi}
We have that
\begin{equation}\label{PropAverage}
\bar{u}_\varepsilon(r_{\varepsilon})=B_{\varepsilon}(r_{\varepsilon})+o\left(\frac{1}{\gamma_\varepsilon}\right)\,,
\end{equation}
and then that $r_\varepsilon=\rho_\varepsilon$ for all $0<\varepsilon\ll 1$. Moreover, there exists $C>0$ such that
\begin{equation}\label{GradCrucEst}
|\nabla(B_\varepsilon-u_\varepsilon)|\le \frac{C}{\rho_{\varepsilon}\gamma_\varepsilon}\text{ in }B_{x_\varepsilon}(\rho_{\varepsilon})\,,
\end{equation}
for all $0<\varepsilon\ll 1$, where $(x_\varepsilon)_\varepsilon$ is as in \eqref{XEps}, $B_\varepsilon$ as in \eqref{B1Eps}, $\bar{u}_\varepsilon$ as in \eqref{UBar}, $\rho_{\varepsilon}$ as in \eqref{RhoEpsDelta} and $r_\varepsilon$ as in \eqref{DefREpsLem}. 
\end{Step}
Since $B_\varepsilon(x_\varepsilon)=u_\varepsilon(x_\varepsilon)=\gamma_\varepsilon$, \eqref{GradCrucEst} obviously implies that 
\begin{equation}\label{ContrHaut1}
|B_\varepsilon-u_\varepsilon|\le C\frac{|\cdot-x_\varepsilon|}{\rho_{\varepsilon}\gamma_\varepsilon}\text{ in }B_{x_\varepsilon}(\rho_{\varepsilon})\,,
\end{equation}
for all $0<\varepsilon\ll 1$. Then, combined with Appendix \ref{Appendix}, Step \ref{StDruThi} provides pointwise estimates of the $u_\varepsilon$'s in $B_{x_\varepsilon}(\rho_\varepsilon)$. 
\begin{proof}[Proof of Step \ref{StDruThi}] The proof of Lemma \ref{StDruThi} follows the lines of \cite[Section 3]{DruThiI}. We only recall here the argument in the more delicate $(\text{\bf Case 1})$. Let $v_\varepsilon$ be given by 
\begin{equation}\label{DefVProof}
u_\varepsilon=B_\varepsilon+v_\varepsilon\,.
\end{equation}
 By Appendix \ref{Appendix}, we have that $B_\varepsilon$ is well defined, radially decreasing in $B_{x_\varepsilon}(\rho_\varepsilon)$, and that 
\begin{equation}\label{DevBubbleRough}
B_\varepsilon=\gamma_\varepsilon-\frac{t_\varepsilon}{\gamma_\varepsilon}+o\left(\frac{t_\varepsilon}{\gamma_\varepsilon}\right)
\end{equation}
 uniformly in $B_{x_\varepsilon}(\rho_\varepsilon)$ as $\varepsilon\to 0$. Then, we get from \eqref{RhoEpsDeltaEst} and \eqref{DefREpsLem} that
\begin{equation}\label{MinorUSt}
\min_{B_{x_\varepsilon}(r_\varepsilon)} u_\varepsilon\ge \gamma_\varepsilon(\varepsilon_0+o(1))\,. 
\end{equation}
First, \eqref{MinorUSt} combined with \eqref{MinorGammaByNEps}, with \eqref{UInOmegaEps} and with our assumption $\varepsilon_0^2>{1/e}$ implies that $B_{x_\varepsilon}(r_\varepsilon)\subset \Omega_\varepsilon$. Then, we can use \eqref{WeakGradEst} to get also from \eqref{MinorUSt} that 
\begin{equation}\label{CondDruThi1}
\left\||x_\varepsilon-\cdot||\nabla u_\varepsilon| \right\|_{L^\infty(B_{x_\varepsilon}(r_\varepsilon))}=O\left(\frac{1}{\gamma_\varepsilon}\right)\,,
\end{equation}
which implies by \eqref{DefREpsLem} that
\begin{equation}\label{BdVSt}
\|v_\varepsilon\|_{L^\infty(B_{x_\varepsilon}(r_\varepsilon))}=O\left(\frac{1}{\gamma_\varepsilon} \right)\,,
\end{equation}
by the mean value property. Therefore, since 
\begin{equation}\label{BLeGamma}
B_\varepsilon\le \gamma_\varepsilon
\end{equation}
 in $B_{x_\varepsilon}(r_\varepsilon)$ and by \eqref{Propg}, \eqref{InftyBehavior}, Lemma \ref{HBehavior}, \eqref{ELEqNEpsBis}, \eqref{PsiPrime}, \eqref{B1Eps}, \eqref{DefREpsLem}, \eqref{DevBubbleRough} and \eqref{MinorUSt}, we get that there exists $C, C'>0$ such that
\begin{equation*}
|\Delta v_\varepsilon|\le C \lambda_\varepsilon \gamma_\varepsilon^2 \varphi_{N_\varepsilon-2}\left(\gamma_\varepsilon^2-2t_\varepsilon(1+o(1))+\frac{t_\varepsilon^2}{\gamma_\varepsilon^2}\right) |v_\varepsilon|\text{ in }B_{x_\varepsilon}(r_\varepsilon)\,,
\end{equation*}
an then that
\begin{equation}\label{CondDruThi2}
|\Delta v_\varepsilon|\le C' \frac{\exp\left(-2 t_\varepsilon(1+o(1))+\frac{t_\varepsilon^2}{\gamma_\varepsilon^2}\right)}{\mu_\varepsilon^2} |v_\varepsilon|\text{ in }B_{x_\varepsilon}(r_\varepsilon)
\end{equation}
by \eqref{MinorPhiNEps}, \eqref{ScalRel} and \eqref{TechnicalRk}. Observe that, for all $\Gamma,\delta>0$,
\begin{equation}\label{TechnicalRk4}
\varphi_{N}(\Gamma)=\delta \exp(\Gamma)\implies \forall T\in[0,\Gamma]\,,\quad \varphi_{N}(T)\le\delta \exp(T)\,,
\end{equation}
since $\varphi'_{N}\ge \varphi_{N}$ in $[0,+\infty]$. Starting now from \eqref{CondDruThi1}-\eqref{CondDruThi2}, we can compute and argue as in \cite[Section 3]{DruThiI} in order to get \eqref{PropAverage}-\eqref{GradCrucEst}. 
\end{proof}

\begin{proof}[Conclusion of the proof of Lemma \ref{LemBlowUpAnalysis}] In order to conclude the proof of Lemma \ref{LemBlowUpAnalysis}, by Steps \ref{Step1}-\ref{StDruThi}, it remains to prove \eqref{Level}, \eqref{EqualLambdaEps}-\eqref{XNotToBDry}, and \eqref{AFirstPointwEst}-\eqref{ExtEstimate} below. Let $\varepsilon_0'\in (\varepsilon_0,1)$ be fixed and let $\rho_\varepsilon'>0$ be given by
\begin{equation}\label{RhoPrimeEps}
t_\varepsilon(\rho_\varepsilon')=(1-\varepsilon_0')\gamma_\varepsilon^2\,,
\end{equation}
so that, by \eqref{EquivLog1surMu2},
\begin{equation}\label{RhoPrimeEpsEst}
(\rho_\varepsilon')^2=\exp(-\varepsilon'_0(1+o(1))\gamma_\varepsilon^2)\,.
\end{equation}\\
\noindent \textbullet~(1) In this first point, we aim to get pointwise estimates of the $u_\varepsilon$'s out of $B_{x_\varepsilon}(\rho_\varepsilon')$. Let $G$ be the Green's function in \eqref{GreenFun}. It is known that (see for instance \cite[Appendix B]{DruThiI}) there exists $C>0$ such that
\begin{equation}\label{GreenFunEstim}
\begin{split}
|\nabla_y G_x(y)|\le \frac{C}{|x-y|}\,,\text{ and }
0<G_x(y)\le \frac{1}{2\pi} \log\frac{C}{|x-y|}\,,
\end{split}
\end{equation}
for all $x,y\in\Omega$, $x\neq y$. By \eqref{GradCrucEst} and since $\|u_\varepsilon\|_{H^1_0}^2\le 4\pi$, it is possible to prove (see for instance the proof of \cite[Claim 4.6]{DruThiI}) that, given $p<1/\varepsilon_0'$,
\begin{equation}\label{LpBd}
\|\exp(u_\varepsilon^2)\|_{L^p(B_{x_\varepsilon}(\rho_\varepsilon'/2)^c)}=O(1)
\end{equation}
for all $\varepsilon$, where $B_{x_\varepsilon}(\rho_\varepsilon'/2)^c=\Omega\backslash B_{x_\varepsilon}(\rho_\varepsilon'/2)$. In the sequel, $p'>1$ is choosen such that
\begin{equation}\label{ChoosePPrime}
\frac{1}{p}+\frac{1}{p'}<1\,.
\end{equation}
 Let now $(z_\varepsilon)_\varepsilon$ be any sequence of points in $B_{x_\varepsilon}(\rho_\varepsilon')^c$. By the Green's representation formula and \eqref{ELEqNEpsBis}, we can write that
\begin{equation}\label{GreensRepInterLem}
u_\varepsilon(z_\varepsilon)=\frac{\lambda_\varepsilon}{2}\int_\Omega G_{z_\varepsilon}(y) \Psi'_{N_\varepsilon}(u_\varepsilon(y))~dy\,.
\end{equation}
By \eqref{GreenFunEstim}, we have that there exists $C>0$ such that
\begin{equation}\label{GreenFunDiff}
|G_{z_\varepsilon}(x_\varepsilon)-G_{z_\varepsilon}|\le C\frac{|x_\varepsilon-\cdot|}{\rho_\varepsilon'}
\end{equation}
in $B_{x_\varepsilon}(\rho_\varepsilon'/2)$, for all $\varepsilon$. By \eqref{EquivLog1surMu2} and \eqref{RhoEpsDeltaEst}, we have that 
\begin{equation}\label{TildeOmegaEps}
\frac{|\cdot-x_\varepsilon|}{\gamma_\varepsilon \rho_\varepsilon}=o\left(\frac{t_\varepsilon}{\gamma_\varepsilon^5} \right)\text{ in }\tilde{\Omega}_\varepsilon:=\{y\text{ s.t. } t_\varepsilon(y)\le \gamma_\varepsilon\}\,,
\end{equation}
as $\varepsilon\to 0$, and then, by \eqref{ContrHaut1}, \eqref{LaplBPert} holds true for $v_\varepsilon$ as in \eqref{DefVProof}. Independently, using \eqref{RoughEstimPsiPrime}, \eqref{ScalRel}, \eqref{ContrHaut1} and \eqref{ExpansionSi} with \eqref{ExpBubble}, we clearly get that there exists $C>0$ such that
\begin{equation}\label{LaplBpert2}
\lambda_\varepsilon|\Psi_{N_\varepsilon}'(u_\varepsilon)|\le C \frac{\exp\left(-2t_\varepsilon+\frac{t_\varepsilon^2}{\gamma_\varepsilon^2} \right)}{\mu_\varepsilon^2 \gamma_\varepsilon} \text{ in }B_{x_\varepsilon}(\rho_\varepsilon'/2)\backslash \tilde{\Omega}_\varepsilon\,,
\end{equation}
for all $\varepsilon$. Then, we get that
\begin{equation}\label{GreensRep2Proof}
\begin{split}
u_\varepsilon(z_\varepsilon) ~~&= G_{z_\varepsilon}(x_\varepsilon) \int_{B_{x_\varepsilon}(\rho_\varepsilon'/2)} \frac{\lambda_\varepsilon \Psi_{N_\varepsilon}'(u_\varepsilon)}{2} dy\\
&+O\left(\int_{B_{x_\varepsilon}(\rho_\varepsilon'/2)} \frac{\exp\left(-2t_\varepsilon+\frac{t_\varepsilon^2}{\gamma_\varepsilon^2} \right) |\cdot-x_\varepsilon|}{\mu_\varepsilon^2 \rho_\varepsilon'} dy \right) +O\left(\lambda_\varepsilon \|u_\varepsilon\|_{L^{p'}} \right)\\
&=G_{z_\varepsilon}(x_\varepsilon)\frac{4\pi}{\gamma_\varepsilon}\left(1+\frac{1}{\gamma_\varepsilon^2}+\frac{A(\gamma_\varepsilon)-2\xi_\varepsilon}{2}+o(\tilde{\zeta}_\varepsilon) \right)\\
&~~\quad \quad+o\left(\frac{1}{\gamma_\varepsilon}+\|u_\varepsilon\|_{L^{p'}}\right)\,,
\end{split}
\end{equation}
where $p'$ is fixed in \eqref{ChoosePPrime}, $\tilde{\zeta}_\varepsilon$ is given by \eqref{TildeZeta} and $x_\varepsilon$ by \eqref{DefXi}. Concerning the first estimate of \eqref{GreensRep2Proof}, \eqref{GreenFunDiff}, \eqref{LaplBpert2} and a rough version of \eqref{LaplBPert} are used to get the first two terms, while \eqref{RoughEstimPsiPrime}, \eqref{GreenFunEstim}, \eqref{LpBd} and Hölder's inequality are used to get the last one. Concerning the second estimate of \eqref{GreensRep2Proof}, \eqref{LambdaTo0}, \eqref{EquivLog1surMu2}, \eqref{RhoEpsDeltaEst}, \eqref{EquationApp}-\eqref{NoteSi}, $\varepsilon_0>1/2$, the dominated convergence theorem, \eqref{LaplBPert} and \eqref{LaplBpert2} are used. Using first that $u_\varepsilon\le \gamma_\varepsilon$ and \eqref{RhoEpsDeltaEst} in $B_{x_\varepsilon}(\rho_\varepsilon)$, and then \eqref{GreensRep2Proof} with \eqref{GreenFunEstim} in $\Omega\backslash B_{x_\varepsilon}(\rho_\varepsilon)$, we get that
\begin{equation}\label{LpBd2}
\|u_\varepsilon\|_{L^{p'}}=o\left(\frac{1}{\gamma_\varepsilon}+\|u_\varepsilon\|_{L^{p'}}\right)+O\left(\frac{1}{\gamma_\varepsilon} \right)\,.
\end{equation}
Summarizing, we get from \eqref{GreensRep2Proof} and \eqref{LpBd2} that
\begin{equation}\label{CClPartGreensFun}
u_\varepsilon(z_\varepsilon)=\frac{4\pi G_{z_\varepsilon}(x_\varepsilon)}{\gamma_\varepsilon}\left(1+\frac{1}{\gamma_\varepsilon^2}+\frac{A(\gamma_\varepsilon)-2\xi_\varepsilon}{2}+o(\tilde{\zeta}_\varepsilon) \right)\\
+o\left(\frac{1}{\gamma_\varepsilon}\right)\,.
\end{equation}

\noindent \textbullet~(2) In this second point, we prove that
\begin{equation}\label{ContrSupLambda}
\lambda_\varepsilon\le \frac{4+o(1)}{\gamma_\varepsilon^2\exp(1+M)}\,,
\end{equation}
as $\varepsilon\to 0$, for $M$ as in \eqref{MaxRobFun}. Observe that \eqref{CClPartGreensFun} implies that
\begin{equation}\label{ExtEst2}
u_\varepsilon=(1+o(1))\frac{4\pi G_{x_\varepsilon}+o(1)}{\gamma_\varepsilon}
\end{equation}
in $\Omega\backslash B_{x_\varepsilon}(\rho_\varepsilon)$.  Then, by \eqref{Propg}, \eqref{GreenFunEstim} and \eqref{ExtEst2}, our definition of $\rho_\varepsilon$ and the dominated convergence theorem, we get that
\begin{equation}\label{EstimExt1}
\lim_{\varepsilon\to 0}\int_{\Omega\backslash B_{x_\varepsilon}(\rho_\varepsilon)}\Psi_{N_\varepsilon}(u_\varepsilon) dy=|\Omega|(1+g(0))\,.
\end{equation}
 Independently, \eqref{ExpBubble} and \eqref{ContrHaut1} give that
\begin{equation}\label{IntEst2}
u_\varepsilon=\gamma_\varepsilon-\frac{(1+o(1))t_\varepsilon}{\gamma_\varepsilon}
\end{equation}
in $B_{x_\varepsilon}(\rho_\varepsilon)$, since $\mu_\varepsilon\ll \rho_\varepsilon$. Then, using \eqref{MinorPhiNEps}, \eqref{TechnicalRk}, $\varepsilon_0^2> 1/e$ and resuming the arguments to get \eqref{EquivPsiN}, we have that
\begin{equation}\label{IntEst3}
\Psi_{N_\varepsilon}(u_\varepsilon)=(1+o(1))\varphi_{N_\varepsilon-1}(u_\varepsilon^2)\text{ and }\Psi_{N_\varepsilon}'(u_\varepsilon)=2(1+o(1))~ u_\varepsilon \varphi_{N_\varepsilon-1}(u_\varepsilon^2)
\end{equation}
in $B_{x_\varepsilon}(\rho_\varepsilon)$. Independently, we get that
\begin{equation}\label{EstimInt1}
\int_{B_{x_\varepsilon}(\rho_\varepsilon)} \Psi_{N_\varepsilon}(u_\varepsilon) dy =\frac{4\pi(1+o(1))}{\gamma_\varepsilon^2 \lambda_\varepsilon}
\end{equation}
as $\varepsilon\to 0$, by \eqref{MinorPhiNEps}, \eqref{ScalRel}, \eqref{IntEst2}, \eqref{IntEst3}, with \eqref{AlgRelat} for $|y-x_\varepsilon|\lesssim \mu_\varepsilon$, or with \eqref{TechnicalRk4} and the dominated convergence theorem for $|y-x_\varepsilon|\gg \mu_\varepsilon$. Then, because of \eqref{ExtremUEps}, we get that \eqref{ContrSupLambda} holds true, by combining \eqref{EstimExt1}, \eqref{EstimInt1} with \eqref{TestFunPrelBis}. \\

\noindent \textbullet~(3) In this point, we conclude the proof of \eqref{EqualLambdaEps}, and prove \eqref{Level} and \eqref{XNotToBDry}. For $R>1$, let $\chi_{\varepsilon,R}$ be given in $\Omega_{\varepsilon,R}:=\Omega\backslash B_{x_\varepsilon}(R\mu_\varepsilon)$ by
\begin{equation}\label{DefChiEpsR}
\chi_{\varepsilon,R}=4\pi \Lambda_{\varepsilon,R} G_{x_\varepsilon}\,,
\end{equation}
for $\Lambda_{\varepsilon,R}>0$ to be chosen later such that
\begin{equation}\label{ConLambdaEpsR}
\chi_{\varepsilon,R}\le u_\varepsilon \text{ on }\partial B_{x_\varepsilon}(R\mu_\varepsilon)\,.
\end{equation}
Integrating by parts, we can write that
\begin{equation}\label{Minor11}
\begin{split}
\int_{\Omega_{\varepsilon,R}} |\nabla u_\varepsilon|^2 dy ~&= \int_{\Omega_{\varepsilon,R}} |\nabla \chi_{\varepsilon,R}|^2 dy-2\int_{\partial B_{x_\varepsilon}(R\mu_\varepsilon)} (\partial_\nu \chi_{\varepsilon,R}) (u_\varepsilon-\chi_{\varepsilon,R}) d\sigma\\
&+\int_{\Omega_{\varepsilon,R}} |\nabla(u_\varepsilon-\chi_{\varepsilon,R})|^2 dy \,,\\
&\ge\int_{\Omega_{\varepsilon,R}} |\nabla \chi_{\varepsilon,R}|^2 dy\,,
\end{split}
\end{equation} 
where $\nu$ is the unit outward normal to the boundary of $B_{x_\varepsilon}(R\mu_\varepsilon)$, using \eqref{ConLambdaEpsR}. Indeed, by \cite[Appendix B]{DruThiI} for instance, since $d(x_\varepsilon,\partial\Omega)\gg \mu_\varepsilon$ by Step \ref{StMinor}, we have that
\begin{equation}\label{Compl2}
\partial_\nu G_{x_\varepsilon}=-\frac{1}{2\pi R\mu_\varepsilon}+O\left(\frac{1}{d(x_\varepsilon,\partial\Omega)} \right)\text{ on }\partial B_{x_\varepsilon}(R \mu_\varepsilon) \,.
\end{equation}
Now, by \eqref{EqHBehavior}, \eqref{ScalRel}, \eqref{FirstResc}, \eqref{TechnicalRk}, \eqref{RhoEpsDeltaEst}, in order to have \eqref{ConLambdaEpsR}, we can choose $\Lambda_{\varepsilon,R}$ such that
\begin{equation}\label{ChoiceLambda}
\begin{split}
&\Lambda_{\varepsilon,R}=\frac{1}{\gamma_\varepsilon}\left(1-\frac{\log(1+R^2)+o(1)}{\gamma_\varepsilon^2}\right)\times\\ 
&\quad\left(1+\frac{\log\frac{\delta_\varepsilon \lambda_\varepsilon \gamma_\varepsilon^2}{4R^2}+\mathcal{H}_{x_\varepsilon}(x_\varepsilon)+O\left(\frac{\mu_\varepsilon^2}{\rho_\varepsilon^2} \right)}{\gamma_\varepsilon^2} \right)^{-1}\,,
\end{split}
\end{equation}
with $\delta_\varepsilon\in (0,1]$ as in \eqref{MinorPhiNEps}. In \eqref{ChoiceLambda}, the term $$\frac{\mu_\varepsilon^2}{\rho_\varepsilon^2}=o(1)$$ by \eqref{RhoEpsDelta}, arguing as in \eqref{Compl1}, since $d(x_\varepsilon,\partial \Omega)>\rho_\varepsilon$ by Step \ref{StDruThi}. Now, by \eqref{GreenFun}, \eqref{Compl2}, \eqref{EquivLog1surMu2} and \eqref{RhoEpsDeltaEst} again, we compute and get that
\begin{equation}\label{Minor12}
\begin{split}
&\int_{\Omega_{\varepsilon,R}} |\nabla \chi_{\varepsilon,R}|^2 dy \\
&~\ge -\int_{\partial B_{x_\varepsilon}(R\mu_\varepsilon)} (\partial_\nu \chi_{\varepsilon,R}) \chi_{\varepsilon,R}~d\sigma\,, \\
&~\ge 4\pi\left(1-\frac{2\log(1+R^2)+o(1)}{\gamma_\varepsilon^2}\right)\left(1+\frac{\log\frac{\delta_\varepsilon \lambda_\varepsilon \gamma_\varepsilon^2}{4R^2}+\mathcal{H}_{x_\varepsilon}(x_\varepsilon)+o(1)}{\gamma_\varepsilon^2}  \right)^{-1}
\end{split}
\end{equation}
using also \eqref{ChoiceLambda}. Independently, we compute 
\begin{equation}\label{Minor13}
\int_{B_{x_\varepsilon}(R\mu_\varepsilon)} |\nabla u_\varepsilon|^2 dy=\frac{4\pi}{\gamma_\varepsilon^2} \left(\log(1+R^2) -\frac{R^2}{1+R^2}+o(1)\right)\,,
\end{equation}
by \eqref{FirstResc}. Then, since $\|u_\varepsilon\|_{H^1_0}^2\le 4\pi$ by \eqref{ExtremUEps}, by \eqref{Minor11}, \eqref{Minor12} and \eqref{Minor13}, we get that $$\frac{\log{\delta_\varepsilon \lambda_\varepsilon } +\mathcal{H}_{x_\varepsilon}(x_\varepsilon)}{\gamma_\varepsilon^2}\ge o(1)\,.$$ Moreover, using also the definition \eqref{MaxRobFun} of $M$, \eqref{ContrSupLambda}, $\delta_\varepsilon\le 1$ and that $R>0$ may be arbitrarily large, we get together that 
\begin{equation}\label{DeltaTo1}
\delta_\varepsilon\to 1\,,
\end{equation}
 and that \eqref{EqualLambdaEps} and \eqref{XNotToBDry} hold true. Observe that \eqref{DeltaTo1} can be obtained directly ({\bf Case 2}). Then, \eqref{Level} follows from \eqref{EqualLambdaEps}, \eqref{EstimExt1} and \eqref{EstimInt1}.\\

\noindent \textbullet~(4) Now we prove \eqref{AGammaSmall1}. Since $\varepsilon_0'>\varepsilon_0$, we get from \eqref{RhoEpsDeltaEst}, \eqref{ContrHaut1}, \eqref{RhoPrimeEpsEst} and \eqref{ExpBubble} that
\begin{equation}\label{ExpUInt}
u_\varepsilon= \gamma_\varepsilon-\frac{t_\varepsilon}{\gamma_\varepsilon}-\frac{t_\varepsilon}{\gamma_\varepsilon^3}-(A(\gamma_\varepsilon)-2\xi_\varepsilon)\frac{t_\varepsilon}{2\gamma_\varepsilon}+o\left(\frac{t_\varepsilon \tilde{\zeta}_\varepsilon}{\gamma_\varepsilon} \right)
\end{equation}
uniformly in $\{y\in B_{x_\varepsilon}(\rho'_\varepsilon) \text{ s.t. }t_\varepsilon\ge \gamma_\varepsilon/4\}$, using also \eqref{ExpansionSi}. Then, noting that the averages of \eqref{CClPartGreensFun} and \eqref{ExpUInt} have to match on $\partial B_{x_\varepsilon}(\rho'_\varepsilon)$, we compute and get that
\begin{equation}\label{LambdaRelFirstLevel}
\lambda_\varepsilon=\frac{4}{\gamma_\varepsilon^2\exp\left(1+M+\frac{\gamma_\varepsilon^2 (A(\gamma_\varepsilon)-2\xi_\varepsilon)}{2}+o(\tilde{\zeta}_\varepsilon\gamma_\varepsilon^2)\right)}\,,
\end{equation}
by \eqref{XNotToBDry}, \eqref{DeltaTo1} and \eqref{ScalRel} with \eqref{EqHBehavior} and \eqref{TechnicalRk}, observing that $$1\lesssim \gamma_\varepsilon^2 G_{x_\varepsilon} \lesssim 1\,,\quad 1\lesssim \gamma_\varepsilon^2 t_\varepsilon \lesssim 1$$ on $\partial B_{x_\varepsilon}(\rho'_\varepsilon)$, by \eqref{RhoPrimeEps} and \eqref{RhoPrimeEpsEst} with \eqref{GreenFun} and \eqref{XNotToBDry}. By \eqref{EqualLambdaEps} and \eqref{LambdaRelFirstLevel}, \eqref{AGammaSmall1} is proved.\\

\noindent \textbullet~(5) Here, we conclude the proof of Lemma \ref{LemBlowUpAnalysis}. As an immediate consequence of \eqref{CClPartGreensFun}, we get that
\begin{equation}\label{AFirstPointwEst}
\left|u_\varepsilon(y)-\frac{4\pi G_{x_\varepsilon}(y)}{\gamma_\varepsilon} \right|=o\left(\frac{G_{x_\varepsilon}(y)}{\gamma_\varepsilon}\right)
\end{equation}
as $\varepsilon\to 0$, uniformly in $B_{x_\varepsilon}(\rho'_\varepsilon)^c$. Pushing now one step further the above computations with very similar arguments, we easily get that
\begin{equation}\label{ExpUInt2}
u_\varepsilon=\gamma_\varepsilon-\frac{t_\varepsilon}{\gamma_\varepsilon}+\frac{S_{0,\varepsilon}}{\gamma_\varepsilon^3}+\frac{S_{1,\varepsilon}}{\gamma_\varepsilon^5}+(A(\gamma_\varepsilon)-2\xi_\varepsilon)\frac{S_{2,\varepsilon}}{\gamma_\varepsilon}+o\left(t_\varepsilon\frac{\zeta_\varepsilon}{\gamma_\varepsilon} \right)\,,
\end{equation}
in $B_{x_\varepsilon}(\rho'_\varepsilon)$, where the $S_{i,\varepsilon}$'s are as in \eqref{DefSiEps}. At last, using in particular \eqref{EqualLambdaEps} with \eqref{ZeroBehavior} to improve the estimates in Point (1) of this proof, we get that
\begin{equation}\label{ExtEstimate}
\begin{split}
u_\varepsilon(y)= &~G_{x_\varepsilon}(y)\left(\frac{4\pi}{\gamma_\varepsilon}+\sum_{i=0}^1 \frac{A_i}{\gamma_\varepsilon^{3+2i}}+\frac{A_2 (A(\gamma_\varepsilon)-2\xi_\varepsilon)}{\gamma_\varepsilon}\right)\\
&+\frac{4 B(\gamma_\varepsilon)}{\gamma_\varepsilon^2 \exp(1+\mathcal{H}_{x_\varepsilon}(x_\varepsilon))} \int_{\Omega}G_{y}(x) F\left(4\pi G_{x_\varepsilon}(x) \right) dx\\
&+o\left(\frac{\zeta_\varepsilon}{\gamma_\varepsilon} G_{x_\varepsilon}(y)+\frac{|B(\gamma_\varepsilon)|}{\gamma_\varepsilon^2}\right)\,,
\end{split}
\end{equation}
in $B_{x_\varepsilon}(\rho'_\varepsilon)^c$, where $F$ and $B(\gamma_\varepsilon)$ are given in \eqref{ZeroBehavior}, where the $A_i$'s are as in \eqref{ExpansionSi}, and  where $\zeta_\varepsilon$ is given in \eqref{DefZetaApp}. 
\end{proof}
\noindent Lemma \ref{LemBlowUpAnalysis} is proved.
\end{proof}

\section{Proof of Proposition \ref{EnerExp}}\label{SectEnerExp}
\begin{proof}[Proof of Proposition \ref{EnerExp}]
Let $\Omega$ be a smooth bounded domain of $\mathbb{R}^2$. Let $g$ be such that \eqref{Propg} and \eqref{InftyBehavior}-\eqref{ZeroBehavior} hold true, for $H$ as in \eqref{DefH}, and let $A(\gamma)$, $B(\gamma)$ and $F$ be thus given. Let $(u_\varepsilon)_\varepsilon$ be a sequence of nonnegative functions such that $u_\varepsilon$ is a maximizer for $(I_{4\pi(1-\varepsilon)}^g(\Omega))$, for all $0<\varepsilon\ll 1$. Assume that \eqref{WeakConvToZero} holds true. Then, we apply Lemma \ref{LemBlowUpAnalysis}, $(\text{\bf Case 2})$: \eqref{SaturCond} holds true for $\alpha_\varepsilon=4\pi(1-\varepsilon)$; there exists a sequence $(\lambda_\varepsilon)_\varepsilon$ of real numbers such that $u_\varepsilon$ is $C^{1,\theta}$ and solves \eqref{ELEq} in $H^1_0$, using \eqref{ELEqNEps}; \eqref{LossCompactness} holds true by  \eqref{XEps}, \eqref{Level} is also true. Moreover, \eqref{EqualLambdaEps}-\eqref{XEps}, \eqref{AFirstPointwEst}-\eqref{ExtEstimate} and \eqref{LaplBPert} ($v_\varepsilon$ as in \eqref{DefVProof}) hold true still by Lemma \ref{LemBlowUpAnalysis}. In order to conclude the proof of Proposition \eqref{EnerExp}, it remains to prove \eqref{EqEnerExp}-\eqref{Eq2EnerExp}. At last, we let $\mu_\varepsilon$ be given by \eqref{ScalRel}, for $N_\varepsilon=1$, since we consider here ({\bf Case 2})\\

\noindent  In view of \eqref{ExtEstimate}, for $z\in\Omega$, we let now $U_{\varepsilon,z}$ be given by
\begin{equation}\label{Model}
\begin{split}
&U_{\varepsilon,z}(x)\\
&=\frac{1}{\gamma_\varepsilon}\left( \log \frac{1}{|x-z|^2+\tilde{\mu}_\varepsilon^2}+\tilde{\mathcal{H}}_{-1,\varepsilon,z}(x)\right)_{(\star)}\\
&\quad+\sum_{i=0}^1\frac{1}{\gamma_\varepsilon^{3+2i}}\left(S_i\left(\frac{x-z}{\tilde{\mu}_\varepsilon}\right)+\frac{A_i}{4\pi}\left(\log \frac{1}{\tilde{\mu}_\varepsilon^2}+\tilde{\mathcal{H}}_{i,\varepsilon,z}(x)\right)-B_i\right)_{(\star\star)}\\
&\quad+\frac{A(\gamma_\varepsilon)}{\gamma_\varepsilon}\left(S_2\left(\frac{x-z}{\tilde{\mu}_\varepsilon}\right)+\frac{A_2}{4\pi}\left(\log \frac{1}{\tilde{\mu}_\varepsilon^2}+\tilde{\mathcal{H}}_{2,\varepsilon,z}(x)\right)-B_2\right)_{(\star\star\star)}\\
&+\frac{4 B(\gamma_\varepsilon)}{\gamma_\varepsilon^2 \exp(1+\mathcal{H}_z(z))} \int_{\Omega}G_{x}(y) F\left(4\pi G_{z}(y) \right) dy
\end{split}
\end{equation}
where the $A_i,B_i$ are as in \eqref{ExpansionSi}, where $\mathcal{H}$ is as in \eqref{GreenFun}, where the $\tilde{\mathcal{H}}_{i,\varepsilon}$ are the unique harmonic functions in $\Omega$ such that the expressions involved in brackets $(\star),(\star\star),(\star\star\star)$ of \eqref{Model} were zero at the boundary, and where $\tilde{\mu}_\varepsilon$ is given by
\begin{equation}\label{TildeMu}
U_{\varepsilon,z}(z)=\gamma_\varepsilon\,.
\end{equation}

\noindent The following result concludes the proof of Proposition \ref{EnerExp}.

\begin{lem}\label{Step3ExistExtremal}
We have that
\begin{equation}\label{TestFunctionsComputations}
S=\int_\Omega G_{\bar{x}}(y) F(4\pi G_{\bar{x}}(y)) ~dy\,,\text{ if }\frac{\gamma_\varepsilon^{-3}B(\gamma_\varepsilon)}{\gamma_\varepsilon^{-4}+|A(\gamma_\varepsilon)|}\not\to 0\,,
\end{equation}
as $\varepsilon\to 0$, where $S$ is as in \eqref{MaxRobFun} and $\bar{x}$ as in \eqref{XNotToBDry}. Moreover, \eqref{EqEnerExp} holds true in any case.
\end{lem}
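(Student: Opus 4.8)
The plan is to derive the energy expansion \eqref{EqEnerExp} by computing $\|u_\varepsilon\|_{H^1_0}^2$ via a comparison between $u_\varepsilon$ and the explicit model function $U_{\varepsilon,\bar x}$ from \eqref{Model}, and then to extract \eqref{TestFunctionsComputations} from the variational optimality of $u_\varepsilon$. First I would use the pointwise estimates already established: \eqref{ExpUInt2} controls $u_\varepsilon$ inside $B_{x_\varepsilon}(\rho'_\varepsilon)$ up to $o(t_\varepsilon\zeta_\varepsilon/\gamma_\varepsilon)$, \eqref{ExtEstimate} controls $u_\varepsilon$ outside, and \eqref{GradCrucEst} gives the gradient control near the concentration point. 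Splitting $\|u_\varepsilon\|_{H^1_0}^2 = \int_{B_{x_\varepsilon}(\rho'_\varepsilon)} |\nabla u_\varepsilon|^2 + \int_{B_{x_\varepsilon}(\rho'_\varepsilon)^c} |\nabla u_\varepsilon|^2$, I would integrate by parts on each piece (using $\Delta u_\varepsilon = \frac{\lambda_\varepsilon}{2}\Psi'_{N_\varepsilon}(u_\varepsilon)$ on the inside, and that $u_\varepsilon$ is essentially $4\pi G_{x_\varepsilon}/\gamma_\varepsilon$ times a scalar on the outside) and match the boundary terms on $\partial B_{x_\varepsilon}(\rho'_\varepsilon)$. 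Here the key inputs are \eqref{EqualLambdaEps} for $\lambda_\varepsilon$, the relation \eqref{ScalRel} defining $\mu_\varepsilon$, the Robin-function value $\mathcal{H}_{x_\varepsilon}(x_\varepsilon)\to M$ from \eqref{XNotToBDry}, and the appendix expansions \eqref{ExpBubble}-\eqref{ExpansionSi} for the radial profile $B_\varepsilon$. Collecting the terms of order $\gamma_\varepsilon^{-4}$, $A(\gamma_\varepsilon)$ and $\gamma_\varepsilon^{-3}B(\gamma_\varepsilon)$ should reproduce exactly $I(\gamma_\varepsilon)$ in \eqref{Eq2EnerExp}, with the coefficient $4\gamma_\varepsilon^{-3}\exp(-1-M)B(\gamma_\varepsilon)S$ arising from the last line of \eqref{ExtEstimate} evaluated against $G_{x_\varepsilon}$, where the integral $\int_\Omega G_{\bar x}(y)F(4\pi G_{\bar x}(y))\,dy$ appears.

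Next, to prove \eqref{TestFunctionsComputations}, I would argue by the maximality of $u_\varepsilon$: since $u_\varepsilon$ is an extremal for $(I^g_{4\pi(1-\varepsilon)}(\Omega))$ and $U_{\varepsilon,z}$ (suitably normalized to have $H^1_0$-norm equal to $4\pi(1-\varepsilon)$) is an admissible competitor for every $z\in\Omega$, we get
$$\int_\Omega (1+g(u_\varepsilon))\exp(u_\varepsilon^2)\,dy \;\ge\; \int_\Omega (1+g(U_{\varepsilon,z}))\exp(U_{\varepsilon,z}^2)\,dy\,.$$
Expanding both sides with the same precision as in the energy computation — using \eqref{ZeroBehaviorG}, \eqref{AsymptG}, \eqref{MinorPhiNEps}, and the dominated-convergence estimates already used to get \eqref{Level} — the leading terms $|\Omega|(1+g(0))+\pi\exp(1+M)$ cancel, and at the next order the $B(\gamma_\varepsilon)$-contribution on the right-hand side is proportional to $\int_\Omega G_z(y)F(4\pi G_z(y))\,dy$ (this is essentially the content of the test-function computation in Step \ref{Step1}, now refined). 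In the regime where $\gamma_\varepsilon^{-3}B(\gamma_\varepsilon)$ dominates $\gamma_\varepsilon^{-4}+|A(\gamma_\varepsilon)|$, this term is the decisive one, so choosing $z\in K_\Omega$ to maximize $\int_\Omega G_z(y)F(4\pi G_z(y))\,dy$ — i.e. realizing $S$ in \eqref{MaxRobFun} — forces $\int_\Omega G_{\bar x}(y)F(4\pi G_{\bar x}(y))\,dy \ge S$; the reverse inequality is immediate since $\bar x\in K_\Omega$ by \eqref{XNotToBDry}. This yields \eqref{TestFunctionsComputations}, and it also justifies that it is legitimate to use $S$ (rather than the value at $\bar x$) as the coefficient in $I(\gamma_\varepsilon)$, closing the proof of \eqref{EqEnerExp}.

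The main obstacle I anticipate is the bookkeeping of error terms in the two-sided expansion: one must propagate the $o(\zeta_\varepsilon/\gamma_\varepsilon)$-type pointwise errors through the nonlinear functional $(1+g(u))\exp(u^2)$ and through $\int|\nabla u|^2$ without losing the orders $\gamma_\varepsilon^{-4}$, $A(\gamma_\varepsilon)$, $\gamma_\varepsilon^{-3}B(\gamma_\varepsilon)$ that one is trying to capture — in particular, verifying that $\zeta_\varepsilon = o(\gamma_\varepsilon^{-4}+|A(\gamma_\varepsilon)|+\gamma_\varepsilon^{-3}|B(\gamma_\varepsilon)|)$, so that these errors are genuinely negligible, and checking the delicate matching on $\partial B_{x_\varepsilon}(\rho'_\varepsilon)$ where the interior expansion \eqref{ExpUInt2} and the exterior expansion \eqref{ExtEstimate}/\eqref{CClPartGreensFun} must agree to this order. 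A secondary subtlety is that the normalization of $U_{\varepsilon,z}$ to norm exactly $4\pi(1-\varepsilon)$ introduces a correction that must itself be expanded to the same precision; but since $U_{\varepsilon,z}$ is built precisely to mimic $u_\varepsilon$, this correction is of the expected size and handled by the same computation. Once these estimates are in place, both \eqref{EqEnerExp} and \eqref{TestFunctionsComputations} follow by comparing the coefficients of like order.
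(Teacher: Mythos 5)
Your plan matches the paper's proof in both structure and substance: the energy expansion \eqref{EqEnerExp} is obtained exactly as you describe, by expanding $\|\nabla U_{\varepsilon,z}\|_{L^2}^2$ and showing $\int_\Omega|\nabla u_\varepsilon|^2\,dx=\int_\Omega U_{\varepsilon,x_\varepsilon}\Delta U_{\varepsilon,x_\varepsilon}\,dx+o(\check\zeta_\varepsilon)$ via the matching of \eqref{ExpUInt2} and \eqref{ExtEstimate} on $\partial B_{x_\varepsilon}(\rho'_\varepsilon)$, and \eqref{TestFunctionsComputations} is extracted precisely by testing the maximality of $u_\varepsilon$ against the renormalized competitor $V_{\varepsilon,\bar x_0}^2=(1-\eta_\varepsilon)U_{\varepsilon,\bar x_0}^2$ centered at a point $\bar x_0\in K_\Omega$ realizing $S$, and comparing the $B(\gamma_\varepsilon)$-coefficients. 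The one imprecision is your closing remark that one must verify $\zeta_\varepsilon=o(\gamma_\varepsilon^{-4}+|A(\gamma_\varepsilon)|+\gamma_\varepsilon^{-3}|B(\gamma_\varepsilon)|)$ — this is false as stated (and not needed); what matters is that the errors are $o(\check\zeta_\varepsilon)$ with $\check\zeta_\varepsilon$ comparable to that sum, which is exactly what the established estimates provide.
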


\begin{proof}[Proof of Lemma \ref{Step3ExistExtremal}] Let $K$ be a compact subset of $\Omega$ and $(z_\varepsilon)_\varepsilon$ be a given sequence of points of $K$. For simplicity, we let in the proof below $\check{\zeta}_\varepsilon$ be given by
\begin{equation}\label{CheckZeta}
\check{\zeta}_\varepsilon=\max\left(\frac{1}{\gamma_\varepsilon^4}, |A(\gamma_\varepsilon)|,\frac{|B(\gamma_\varepsilon)|}{\gamma_\varepsilon^3} \right)\,.
\end{equation}
Observe also that we get from \eqref{AGammaSmall}, \eqref{ExpUInt2} and \eqref{ExpansionSi} that
\begin{equation}\label{ContrHaut2}
\left|u_\varepsilon(y)-\left(\gamma_\varepsilon-\frac{t_\varepsilon(y)}{\gamma_\varepsilon} \right) \right|\le \frac{C}{\gamma_\varepsilon}\,,
\end{equation}
in $\{y\text{ s.t. }\gamma_\varepsilon/2\le t_\varepsilon(y)\le \gamma_\varepsilon(\gamma_\varepsilon-1/2)\}$, as $\varepsilon\to 0$.
\\
\noindent \textbullet~(1) We first derive the following more explicit expression of the $\tilde{\mu}_\varepsilon$ from \eqref{TildeMu}:
\begin{equation}\label{TildeMuMoreExplicit}
\begin{split}
&\frac{4}{\tilde{\mu}_\varepsilon^2\exp(\gamma_\varepsilon^2)\gamma_\varepsilon^2}\\
&=\frac{4}{\gamma_\varepsilon^2\exp(1+\mathcal{H}_{z_\varepsilon}(z_\varepsilon))}\left(1+O\left(\check{\zeta}_\varepsilon+\gamma_\varepsilon^4|A(\gamma_\varepsilon)|^2 \right) \right)\times\\
&\quad\left(1-\frac{\gamma_\varepsilon^2 A(\gamma_\varepsilon)}{2}- \frac{4 B(\gamma_\varepsilon)}{\gamma_\varepsilon \exp(1+\mathcal{H}_{z_\varepsilon}(z_\varepsilon))} \int_{\Omega}G_{z_\varepsilon}(y) F\left(4\pi G_{z_\varepsilon}(y) \right) dy\right)
\end{split}
\end{equation}
as $\varepsilon\to 0$. By the maximum principle and \eqref{ExpansionSi}, we get that there exists $C_K>0$ such that $|\mathcal{\tilde{H}}_{j,\varepsilon,z_\varepsilon}|\le C_K$ in $\Omega$, so that, by elliptic theory, the $\mathcal{\tilde{H}}_{j,\varepsilon,z_\varepsilon}$'s are also bounded in $C^1_{loc}(\Omega)$ for all $\varepsilon$ and $j$. We get from \eqref{TildeMu} that $\left|\log \frac{1}{\tilde{\mu}_\varepsilon^2}-\gamma_\varepsilon^2 \right|\le C_K'$, and then that
\begin{equation}\label{InterTildeMu}
\left|\mathcal{\tilde{H}}_{j,\varepsilon,z_\varepsilon}-\mathcal{H}_{z_\varepsilon}\right|\le C_K''{\gamma_\epsilon^4}\exp\left(-2\gamma_\varepsilon^2 \right)\text{ in }\Omega\,, 
\end{equation}
for all $0<\varepsilon\ll 1$ and $j\in\{-1,...,2\}$, by the maximum principle, \eqref{GreenFun} and \eqref{ExpansionSi}. Rewriting then \eqref{TildeMu} as
\begin{equation}\label{TildeMuSuite}
\begin{split}
\gamma_\varepsilon^2=&\log \frac{1}{\tilde{\mu}_\varepsilon^2}\left(1+\frac{A_0}{4\pi \gamma_\varepsilon^2}+\frac{A_1}{4\pi \gamma_\varepsilon^4}+\frac{A(\gamma_\varepsilon)A_2}{4\pi} \right)+\mathcal{H}_{z_\varepsilon}(z_\varepsilon)\left(1+\frac{A_0}{4\pi \gamma_\varepsilon^2} \right)\\
&-\frac{B_0}{\gamma_\varepsilon^2}+\frac{4 B(\gamma_\varepsilon)}{\gamma_\varepsilon \exp(1+\mathcal{H}_{z_\varepsilon}(z_\varepsilon))} \int_{\Omega}G_{z_\varepsilon}(y) F\left(4\pi G_{z_\varepsilon}(y) \right) dy\\
&+O\left(\gamma_\varepsilon^{-4}+|A(\gamma_\varepsilon)| \right)\,,
\end{split}
\end{equation}
we easily get \eqref{TildeMuMoreExplicit}, using \eqref{AGammaSmall} and \eqref{ExpansionSi} with $\frac{A_1}{4\pi}-\frac{A_0^2}{16\pi^2}-B_0=0$.\\

\noindent\textbullet~(2) We prove now that
\begin{equation}\label{TestFunPushed}
\int_\Omega |\nabla U_{\varepsilon,z_\varepsilon}|^2 dx=4\pi\left(1+I_{z_\varepsilon}(\gamma_\varepsilon) +o\left(\check{\zeta}_\varepsilon \right)\right)\,,
\end{equation}
as $\varepsilon\to 0$, where $I_{z_\varepsilon}(\gamma_\varepsilon)$ is given by
\begin{equation}\label{IZEpsilon}
I_{z_\varepsilon}(\gamma_\varepsilon)=\gamma_\varepsilon^{-4}+\frac{A(\gamma_\varepsilon)}{2}+ \frac{4 B(\gamma_\varepsilon)}{\gamma_\varepsilon^3 \exp(1+\mathcal{H}_{z_\varepsilon}(z_\varepsilon))}\int_{\Omega} G_{z_\varepsilon}(y)F(4\pi G_{z_\varepsilon}(y))~dy\,,
\end{equation}
 and where $U_{\varepsilon,z_\varepsilon}$ is given by \eqref{Model}-\eqref{TildeMu}. By \eqref{ZeroBehavior} and elliptic theory, 
\begin{equation} \label{BoundC1}
 \left(x\mapsto\int_{\Omega}G_{x}(y) F\left(4\pi G_{z_\varepsilon}(y) \right) dy\right)_\varepsilon\text{ is a bounded sequence in }C^1(\bar{\Omega})\,.
\end{equation} 
 By construction of the $\tilde{\mathcal{H}}_{j,\varepsilon,z_\varepsilon}$, we can write that
\begin{equation}\label{TestFunEnerComput}
\begin{split}
&\int_\Omega |\nabla U_{\varepsilon,z_\varepsilon}(y)|^2 dy=\int_{\Omega}  \Delta U_{\varepsilon,z_\varepsilon}(y) ~U_{\varepsilon,z_\varepsilon}(y)~dy\,,\\
&=\int_{\{y:\tilde{t}_\varepsilon(y)\le \gamma_\varepsilon\}} \left(\frac{\Delta(-\tilde{t}_\varepsilon)}{\gamma_\varepsilon}+\frac{\Delta \tilde{S}_{0,\varepsilon}}{\gamma_\varepsilon^3}+\frac{\Delta \tilde{S}_{1,\varepsilon}}{\gamma_\varepsilon^5}+\frac{A(\gamma_\varepsilon)\Delta \tilde{S}_{2,\varepsilon}}{\gamma_\varepsilon} \right)\times\\
& \quad\quad\quad\left(\gamma_\varepsilon-\frac{\tilde{t}_\varepsilon}{\gamma_\varepsilon}+\frac{\tilde{S}_{0,\varepsilon}}{\gamma_\varepsilon^3}+O\left( \left(\frac{|A(\gamma_\varepsilon)|}{\gamma_\varepsilon}+\frac{1}{\gamma_\varepsilon^5}\right)(1+\tilde{t}_\varepsilon)+\frac{|y-z_\varepsilon|}{\gamma_\varepsilon}\right)\right) dy\\
&\quad\quad+o(\gamma_\varepsilon^{-4})\\
&+\int_{\{y:\tilde{t}_\varepsilon(y)\ge \gamma_\varepsilon(\gamma_\varepsilon-1)\}} \left(O\left(\tilde{\mu}_\varepsilon^2\gamma_\varepsilon^4 \right)+\frac{4 B(\gamma_\varepsilon)}{\gamma_\varepsilon^2\exp(1+\mathcal{H}_{z_\varepsilon}(z_\varepsilon))}F(4\pi G_{z_\varepsilon}(y))\right)\times\\
&\quad\quad\quad\quad\quad\quad\quad\quad\quad\quad\quad\quad \left(\frac{4\pi G_{z_\varepsilon}(y)}{\gamma_\varepsilon}+O\left(\frac{G_{z_\varepsilon}(y)}{\gamma_\varepsilon^3}+\frac{|B(\gamma_\varepsilon)|}{\gamma_\varepsilon^2} \right) \right) dy\,,
\end{split}
\end{equation}
where $\tilde{t}_\varepsilon(y)=\log\left(1+|y-z_\varepsilon|^2/\tilde{\mu}_\varepsilon^2 \right)$ and $\tilde{S}_{i,\varepsilon}=S_i(|y-z_\varepsilon|/\tilde{\mu}_\varepsilon)$. We use also here \eqref{GreenFun} with \eqref{AGammaSmall}, and the estimates of Point (1) of this proof, including \eqref{TildeMuMoreExplicit}-\eqref{InterTildeMu}. The integral on $\{\tilde{t}_\varepsilon\in (\gamma_\varepsilon,\gamma_\varepsilon(\gamma_\varepsilon-1))\}$ gives a $o(\gamma_\varepsilon^{-4})$ term. Estimate \eqref{TestFunPushed} follows from \eqref{TestFunEnerComput}, Appendix \ref{Appendix} and some computations that we do not develop here again (see also \cite{MartMan}, §5).\\
\noindent \textbullet~(3) We prove now that
\begin{equation}\label{UEpsPushed}
\int_\Omega |\nabla u_\varepsilon|^2 dx=4\pi\left(1+I_{x_\varepsilon}(\gamma_\varepsilon)+o\left(\check{\zeta}_\varepsilon\right) \right)\,,
\end{equation}
as $\varepsilon\to 0$, where $I_{x_\varepsilon}(\gamma_\varepsilon)$ is given by \eqref{IZEpsilon}, for $(x_\varepsilon)_\varepsilon$ as in \eqref{XEps}. Now, we can push one step further the argument involving \eqref{LambdaRelFirstLevel}, writing now that both formulas \eqref{ExpUInt2} and \eqref{ExtEstimate} must also  coincide on $\partial B_{x_\varepsilon}(\rho'_\varepsilon)$, where $\rho'_\varepsilon>0$ is as in \eqref{RhoPrimeEps}. We compute and then get for $\mu_\varepsilon$ in \eqref{ScalRel} the analogue of \eqref{TildeMuMoreExplicit} for $\tilde{\mu}_\varepsilon$
\begin{equation}\label{TildeMuMoreExplicit2}
\begin{split}
&\lambda_\varepsilon H(\gamma_\varepsilon)=\frac{4}{{\mu}_\varepsilon^2\exp(\gamma_\varepsilon^2)\gamma_\varepsilon^2}\left(1+o\left(\frac{1}{\gamma_\varepsilon^4}\right)\right)\\
&=\frac{4}{\gamma_\varepsilon^2\exp(1+\mathcal{H}_{x_\varepsilon}(x_\varepsilon))}\left(1+o\left(\gamma_\varepsilon^2 \check{\zeta}_\varepsilon \right) \right)\times\\
&\quad\left(1-\frac{\gamma_\varepsilon^2 A(\gamma_\varepsilon)}{2}- \frac{4 B(\gamma_\varepsilon)}{\gamma_\varepsilon \exp(1+\mathcal{H}_{x_\varepsilon}(x_\varepsilon))} \int_{\Omega}G_{x_\varepsilon}(y) F\left(4\pi G_{x_\varepsilon}(y) \right) dy\right)\,,
\end{split}
\end{equation}
using \eqref{GreenFun}, \eqref{AGammaSmall}, \eqref{ExpansionSi}-\eqref{ExpBubble}. Independently, integrating by parts, resuming some computations in Appendix \ref{Appendix} and using \eqref{ELEq}, \eqref{XNotToBDry}, \eqref{EquivLog1surMu2}, Point (1), and \eqref{AFirstPointwEst}-\eqref{ExtEstimate} (see also \eqref{DefVProof} and \eqref{LaplBPert}), we get that
\begin{equation}\label{IPPUEps}
\begin{split}
\int_\Omega |\nabla u_\varepsilon|^2 dx~&=\int_\Omega u_\varepsilon \left(\lambda_\varepsilon H(u_\varepsilon)  u_\varepsilon \exp(u_\varepsilon^2)\right) dx\,,\\
&=\int_\Omega U_{\varepsilon,x_\varepsilon} \Delta U_{\varepsilon,x_\varepsilon} dx+o\left(\check{\zeta}_\varepsilon\right)\,.
\end{split}
\end{equation}
 In order to get the second equality and to apply the dominated convergence theorem, it may be useful to split $\Omega$ according 
\begin{equation*}
\begin{split}
\Omega=~ &\left\{y\text{ s.t. }t_\varepsilon(y)\le \gamma_\varepsilon\right\} \cup \left\{y\text{ s.t. } t_\varepsilon(y)>\gamma_\varepsilon\text{ and }\log\frac{1}{|x_\varepsilon-y|^2}\ge \frac{1-\delta'_0}{2} \gamma_\varepsilon^2 \right\}\\
& \cup \left\{y\text{ s.t. }\log\frac{1}{|x_\varepsilon-y|^2}< \frac{1-\delta'_0}{2} \gamma_\varepsilon^2 \right\} \,,
\end{split}
\end{equation*}
where $\delta'_0$ is as in \eqref{ZeroBehavior}, and to use the first line of \eqref{TildeMuMoreExplicit2} with \eqref{InftyBehavior} (resp. with \eqref{RoughEstimPsiPrime}) in the first region (resp. in the second region), or \eqref{ZeroBehavior}-\eqref{LimitB} in the last region. Observe that the argument here is to show that $U_{\varepsilon,x_\varepsilon}$ (resp. $\Delta U_{\varepsilon,x_\varepsilon}$) is in some sense the main part of the expansion of $u_\varepsilon$ (resp. $\Delta u_\varepsilon$). Thus we get \eqref{UEpsPushed} from \eqref{TestFunPushed} and \eqref{IPPUEps}.\\
\noindent \textbullet~(4) We prove now that, for any fixed sequence $(\eta_\varepsilon)_\varepsilon$ of real numbers such that $\eta_\varepsilon=o(\gamma_\varepsilon^{-2})$, we have that
\begin{equation}\label{ExpFunctional}
\begin{split}
&\int_\Omega \left(1+g(V_{\varepsilon,z_\varepsilon}) \right)\exp\left(V_{\varepsilon,z_\varepsilon}^2 \right) dy\\
&\quad=|\Omega|(1+g(0))+\pi\exp(1+\mathcal{H}_{z_\varepsilon}(z_\varepsilon))(1-\eta_\varepsilon\gamma_\varepsilon^2)\times\\
&\quad\quad\quad H(\gamma_\varepsilon)\left(1+\gamma_\varepsilon^2 I_{z_\varepsilon}(\gamma_\varepsilon)+\frac{1}{\gamma_\varepsilon^2}+o\left(\gamma_\varepsilon^2\left(\check{\zeta}_\varepsilon+|\eta_\varepsilon|\right)\right) \right)\times\\
&\quad\quad\quad\left(1+\frac{8 B(\gamma_\varepsilon)}{\gamma_\varepsilon\left(\kappa+1\right)\exp(1+\mathcal{H}_{z_\varepsilon}(z_\varepsilon))} \int_\Omega G_{z_\varepsilon}(y) F\left(4\pi G_{z_\varepsilon}(y) \right)~ dy \right)\,, 
\end{split}
\end{equation}
where $\kappa$ is as in \eqref{ZeroBehavior} and where $V_{\varepsilon,z_\varepsilon}\ge 0$ is given by
\begin{equation}\label{DefVEps}
V_{\varepsilon,z_\varepsilon}^2=(1-\eta_\varepsilon)U_{\varepsilon,z_\varepsilon}^2\,,
\end{equation}
where $U_{\varepsilon,z_\varepsilon}$ is given in \eqref{Model}. Computations in the spirit of the proof of \eqref{IPPUEps} give that
\begin{equation}\label{ParticularCaseExpFun}
\begin{split}
\int_\Omega \left(1+g(U_{\varepsilon,x_\varepsilon}) \right)\exp\left(U_{\varepsilon,x_\varepsilon}^2 \right) dy~&~= \int_\Omega \left(1+g(u_\varepsilon) \right) \exp\left(u_\varepsilon^2 \right) dy+o\left(\gamma_\varepsilon^2 \check{\zeta}_\varepsilon\right)\,,
\end{split}
\end{equation}
not only by combining \eqref{Propg}, \eqref{InftyBehavior}-\eqref{ZeroBehavior}, Lemma \ref{HBehavior}, \eqref{XNotToBDry}, \eqref{AFirstPointwEst}-\eqref{ExtEstimate} and Appendix \ref{Appendix}, and by splitting $\Omega$ as in \eqref{TestFunEnerComput}, but also by using \eqref{TildeMuMoreExplicit} and \eqref{TildeMuMoreExplicit2}. In particular, once \eqref{ExpFunctional} is proved, choosing $\eta_\varepsilon=0$ and $z_\varepsilon=x_\varepsilon$, we get from \eqref{ParticularCaseExpFun} that
\begin{equation}\label{ParticularCaseExpFun2}
\begin{split}
&\int_\Omega \left(1+g(u_\varepsilon) \right)\exp\left(u_\varepsilon^2 \right) dy=|\Omega|(1+g(0))+\pi\exp(1+\mathcal{H}_{x_\varepsilon}(x_\varepsilon))H(\gamma_\varepsilon)\times\\
&\quad\quad\quad\quad \left(1+\gamma_\varepsilon^2 I_{x_\varepsilon}(\gamma_\varepsilon)+\frac{1}{\gamma_\varepsilon^2}+o\left(\gamma_\varepsilon^2 \check{\zeta}_\varepsilon \right) \right)\times\\
&\quad\quad\quad \quad\left(1+\frac{8 B(\gamma_\varepsilon)}{\gamma_\varepsilon\left(\kappa+1\right)\exp(1+\mathcal{H}_{x_\varepsilon}(x_\varepsilon))} \int_\Omega G_{x_\varepsilon}(y) F\left(4\pi G_{x_\varepsilon}(y) \right)~ dy \right)\,.
\end{split}
\end{equation}
 It remains to prove \eqref{ExpFunctional}. We compute and get that
\begin{equation}\label{USquare}
U_{\varepsilon,z_\varepsilon}(y)^2=\gamma_\varepsilon^2-2\tilde{t}_\varepsilon+\frac{\tilde{t}_\varepsilon^2}{\gamma_\varepsilon^2}+\frac{2\tilde{S}_{0,\varepsilon}}{\gamma_\varepsilon^2}+O\left((|A(\gamma_\varepsilon)|+\gamma_\varepsilon^{-4})(1+\tilde{t}_\varepsilon(y)^2)+|y-z_\varepsilon| \right)
\end{equation}
for all $y$ such that $\tilde{t}_\varepsilon(y)\le \gamma_\varepsilon$, using \eqref{LimitB}, \eqref{Model}-\eqref{TildeMu}, \eqref{TildeMuMoreExplicit}, \eqref{BoundC1} and \eqref{ExpansionSi}. Then we get
\begin{equation*}
\begin{split}
&\int_{\{\tilde{t}_\varepsilon\le \gamma_\varepsilon\}} (1+g(V_{\varepsilon,z_\varepsilon}))\exp(V_{\varepsilon,z_\varepsilon}^2) dy\\
&=\int_{\{\tilde{t}_\varepsilon\le \gamma_\varepsilon\}} H(\gamma_\varepsilon)(1+O(|A(\gamma_\varepsilon)|\exp(\delta_0\tilde{t}_\varepsilon)))\exp(\gamma_\varepsilon^2) \exp(-2\tilde{t}_\varepsilon) \exp(-\eta_\varepsilon \gamma_\varepsilon^2)\times\\
&\quad\exp\left(\frac{\tilde{t}_\varepsilon^2+2\tilde{S}_{0,\varepsilon}}{\gamma_\varepsilon^2} \right)\exp\left(O\left(\left(|\eta_\varepsilon|+|A(\gamma_\varepsilon)|+\gamma_\varepsilon^{-4}\right)(1+\tilde{t}^{2}_\varepsilon) \right)+|y-z_\varepsilon|\right)~dy
\end{split}
\end{equation*}
using \eqref{AsymptG} and \eqref{DefVEps} with \eqref{USquare}. Then combining $\eta_\varepsilon=o(\gamma_\varepsilon^{-2})$, \eqref{AGammaSmall}, \eqref{TildeMuMoreExplicit}, computing explicitly $\int_{\mathbb{R}^2}\exp(-2T_0)S_0 dy=0$ and $\int_{\mathbb{R}^2}\exp(-2T_0) T_0^2 dy=2\pi$, we get that
\begin{equation}\label{IntegraleInteriorPart}
\begin{split}
&\int_{\{\tilde{t}_\varepsilon\le \gamma_\varepsilon\}} (1+g(V_{\varepsilon,z_\varepsilon}))\exp(V_{\varepsilon,z_\varepsilon}^2) dy\\
&=\frac{(1-\eta_\varepsilon \gamma_\varepsilon^2)H(\gamma_\varepsilon)\exp(\mathcal{H}_{z_\varepsilon}(z_\varepsilon)+1)}{4}\left(1+o\left(\gamma_\varepsilon^2\left(|A(\gamma_\varepsilon)|+|\eta_\varepsilon|\right)+\gamma_\varepsilon^{-2} \right) \right)\\
&\times\Bigg(1+\frac{\gamma_\varepsilon^2 A(\gamma_\varepsilon)}{2}+\frac{4B(\gamma_\varepsilon)}{\gamma_\varepsilon\exp(\mathcal{H}_{z_\varepsilon}(z_\varepsilon)+1)}\times\\
&\quad\quad\quad\quad\quad\quad\int_\Omega G_{z_\varepsilon}(x)F(4\pi G_{z_\varepsilon}(x))dx+o\left(\frac{B(\gamma_\varepsilon)}{\gamma_\varepsilon} \right) \Bigg)\times 4\pi\left(1+\frac{2}{\gamma_\varepsilon^2} \right)\,.
\end{split}
\end{equation}
Independently, we get from \eqref{ZeroBehavior}, \eqref{ZeroBehaviorG} (part $c)$ in $\{y\,,4\pi G_{z_\varepsilon}(y)\le \gamma_\varepsilon/2 \}$, or parts $a)$ and $b)$ otherwise), \eqref{Model}, \eqref{TildeMuMoreExplicit} and the dominated convergence theorem that
\begin{equation}\label{IntegraleExteriorPart}
\begin{split}
&\int_{\{\tilde{t}_\varepsilon\ge \gamma_\varepsilon\}} (1+g(V_{\varepsilon,z_\varepsilon}))\exp(V_{\varepsilon,z_\varepsilon}^2) dy\\
&\quad \quad=|\Omega|\left(1+g(0) \right)+\frac{8\pi B(\gamma_\varepsilon)}{\gamma_\varepsilon\left(\kappa+1\right)} \int_\Omega G_{z_\varepsilon}(y) F\left(4\pi G_{z_\varepsilon}(y) \right)~ dy\\
&\quad \quad \quad\quad\quad\quad\quad\quad\quad\quad\quad\quad\quad\quad\quad +o\left(\frac{|B(\gamma_\varepsilon)|}{\gamma_\varepsilon}+\frac{1}{\gamma_\varepsilon^2} \right)\,.
\end{split}
\end{equation}
Combining \eqref{IntegraleInteriorPart} and \eqref{IntegraleExteriorPart}, we conclude that \eqref{ExpFunctional} holds true, using \eqref{EqHBehavior} and \eqref{TildeMuMoreExplicit}.
 \\
\noindent \textbullet~(5) We are now in position to conclude the proof of Lemma \ref{Step3ExistExtremal}. Let $\bar{x}_0$ be a point in the compact $K_\Omega\subset\subset \Omega$ where $S$ is attained in the third equation of \eqref{MaxRobFun}. Let $\eta_\varepsilon$ be given by
\begin{equation}\label{DefEtaEps}
(1-\eta_\varepsilon)=\frac{4\pi(1-\varepsilon)}{\|U_{\varepsilon,\bar{x}_0}\|_{H^1_0}^2}\,.
\end{equation}
 First, we can check that 
\begin{equation}\label{ExpEta2}
\eta_\varepsilon=I_{\bar{x}_0}(\gamma_\varepsilon)-I_{x_\varepsilon}(\gamma_\varepsilon)+o( \check{\zeta}_\varepsilon)\,,
\end{equation}
so that the condition $\eta_\varepsilon=o(\gamma_\varepsilon^{-2})$ above \eqref{ExpFunctional} is satisfied, using \eqref{LimitB}, \eqref{ExtremUEps}, \eqref{AGammaSmall}, \eqref{TestFunPushed} and \eqref{UEpsPushed}. Besides, we have that $\|V_{\varepsilon,\bar{x}_0}\|_{H_0^1}^2=4\pi(1-\varepsilon)$, by our choice \eqref{DefEtaEps} of $\eta_\varepsilon$, and then, by \eqref{ExtremUEps}, that
\begin{equation*}
\int_\Omega (1+g(u_\varepsilon)) \exp(u_\varepsilon^2)~ dy\ge \int_\Omega (1+g(V_{\varepsilon,\bar{x}_0}))\exp(V_{\varepsilon,\bar{x}_0}^2)~dy\,;
\end{equation*}
this implies, in view of \eqref{ExpFunctional}, \eqref{ParticularCaseExpFun2}, \eqref{ExpEta2} and of our choice of $\bar{x}_0$, that \eqref{TestFunctionsComputations} is true and then, by \eqref{UEpsPushed} again,  that \eqref{EqEnerExp}-\eqref{Eq2EnerExp} are true as well. This concludes the proof of Lemma \ref{Step3ExistExtremal}.
\end{proof}
\noindent Proposition \ref{EnerExp} is proved.
\end{proof}

\begin{proof}[Proof of Proposition \ref{NonExExtProp}]
Let $\Omega$ be a smooth bounded domain of $\mathbb{R}^2$. Let $g$ be such that \eqref{Propg} and \eqref{InftyBehavior}-\eqref{ZeroBehavior} hold true, for $H$ as in \eqref{DefH}, and let $A$, $B$ and $F$ be thus given. Assume that $\Lambda_g(\Omega)<\pi \exp(1+M)$, where $M$ is as in \eqref{MaxRobFun} and $\Lambda_g(\Omega)$ as in \eqref{NonlinEigen}. Assume that there exists a sequence of positive integers $(N_\varepsilon)_\varepsilon$ such that \eqref{NToInfty} holds true and such that $(I_{4\pi}^{g_{N_\varepsilon}}(\Omega))$ admits a nonnegative extremal $u_\varepsilon$ for all $\varepsilon>0$, where $g_{N_\varepsilon}$ is as in \eqref{gN}. Then, by Lemma \ref{LemBlowUpAnalysis} in $(\text{\bf Case 1})$, we have \eqref{WeakConvToZero} and that \eqref{SaturCond} holds true for $\alpha_\varepsilon=4\pi$, for all $0<\varepsilon\ll 1$. Moreover, we have $u_\varepsilon\in C^{1,\theta}(\bar{\Omega})$ ($0<\theta<1$) and \eqref{LossCompactness} by \eqref{XEps}. In order to conclude the proof of Proposition \ref{NonExExtProp}, it remains to prove \eqref{EqEnerExp2}. Still by Lemma \ref{LemBlowUpAnalysis} in $(\text{\bf Case 1})$, \eqref{AFirstPointwEst}-\eqref{ExtEstimate} and \eqref{LaplBPert} ($v_\varepsilon$ as in \eqref{DefVProof}) hold true. Concerning \eqref{AFirstPointwEst}-\eqref{ExtEstimate} and \eqref{LaplBPert}, observe that, contrary to $(\text{\bf Case 2})$, the term $\xi_\varepsilon$ cannot be neglected in $(\text{\bf Case 1})$ we are facing here. {Indeed, using also now \eqref{MinorPhiNEps}, \eqref{ScalRel}, \eqref{TechnicalRk4} and \eqref{LaplBPert}, we can resume computations of \eqref{TestFunEnerComput}, \eqref{IPPUEps} and Appendix \ref{Appendix}} to get that
\begin{equation*}
\|u_\varepsilon\|_{H^1_0}^2=4\pi\left(1+\check{I}(\gamma_\varepsilon)+o\left(\gamma_\varepsilon^{-4}+|A(\gamma_\varepsilon)|+\gamma_\varepsilon^{-3}|B(\gamma_\varepsilon)|+\xi_\varepsilon \right)\right)
\end{equation*}
as $\varepsilon\to 0$, where 
\begin{equation*}
\check{I}(\gamma_\varepsilon):=\gamma_\varepsilon^{-4}+(A(\gamma_\varepsilon)-2\xi_\varepsilon)/2+4\gamma_\varepsilon^{-3}\exp(-1-M) B(\gamma_\varepsilon) S\,,
\end{equation*}
so that \eqref{EqEnerExp2} holds true, which concludes. 
\end{proof}

\appendix
\section{Radial analysis}\label{Appendix} Let $(x_\varepsilon)_\varepsilon$ be a sequence of points in $\mathbb{R}^2$ and $(\gamma_\varepsilon)_\varepsilon$ be a sequence of positive real numbers such that \eqref{GammaToInfty} holds true. Let $g$ be such that \eqref{Propg} and \eqref{InftyBehavior} holds true for $H$ as in \eqref{DefH}, and let $A$ be thus given. Let $(N_\varepsilon)_\varepsilon$ be a sequence of integers. We assume that we are in one of the following two cases:
\begin{equation}\tag{Case 1}
N_\varepsilon\to +\infty \text{ as }\varepsilon\to 0, \text { and \eqref{MinorPhiNEps}-\eqref{MinorGammaByNEps} hold true},
\end{equation}
\begin{equation}\tag{Case 2}
N_\varepsilon=1\text{ for all }\varepsilon\,.
\end{equation}
 Let $B_{\varepsilon}$ be the radial solution around $x_\varepsilon$ in $\mathbb{R}^2$ of \eqref{B1Eps}, for $\Psi_N$ as in \eqref{DefPsiN}, where $(\lambda_\varepsilon)_\varepsilon$ is any given sequence of positive real numbers. Let $T_0$ be given in $\mathbb{R}^2$ by
\begin{equation}\label{DefT0}
T_0(x)=\log\left(1+|x|^2 \right)\,.
\end{equation}
Let $S_i$, $i=0,1,2$, be the radially symmetric solutions around $0$ in $\mathbb{R}^2$ of
\begin{equation}\label{EquationApp}
\begin{split}
&\Delta S_0-8\exp(-2 T_0)S_0=4\exp(-2T_0)\left(T_0^2-T_0 \right)\,,\\
&\Delta S_1-8\exp(-2 T_0)S_1=4\exp(-2T_0) \left(S_0+2S_0^2-4T_0 S_0+2S_0T_0^2-T_0^3+\frac{T_0^4}{2} \right)\,,\\
&\Delta S_2-8\exp(-2 T_0)S_2=4\exp(-2T_0) T_0\,,
\end{split}
\end{equation}
such that $S_i(0)=0$. In the sequel, we will use the asymptotic expansions of the $S_i$'s given by
\begin{equation}\label{ExpansionSi}
\begin{split}
&S_0(r)=\frac{A_0}{4\pi}\log\frac{1}{r^2}+B_0+O\left(\log(r)^2r^{-2} \right)\text{ where }A_0=4\pi, \quad B_0=\frac{\pi^2}{6}+2\,,\\
&S_1(r)=\frac{A_1}{4\pi} \log\frac{1}{r^2}+B_1+O\left(\log(r)^4r^{-2}\right)\text{ where }A_1=4\pi\left( 3+\frac{\pi^2}{6}\right), \quad B_1\in\mathbb{R}\,,\\
&S_2(r)=\frac{A_2}{4\pi} \log\frac{1}{r^2}+B_2+O\left(\log(r)r^{-2}\right)\text{ where }A_2=2\pi, \quad B_2\in\mathbb{R}\,,
\end{split}
\end{equation}
as $r=|x|\to +\infty$. Note that in particular
\begin{equation}\label{NoteSi}
A_i=\int_{\mathbb{R}^2} \Delta S_i dx\,.
\end{equation}
 The explicit formula for $S_0$
$$S_0(r)=-T_0(r)+\frac{2r^2}{1+r^2}-\frac{1}{2}T_0(r)^2+\frac{1-r^2}{1+r^2}\int_1^{1+r^2}\frac{\log t}{1-t} dt\,, $$
and the expansions in \eqref{ExpansionSi} are derived in \cites{MalchMartJEMS,MartMan}. Let $\varepsilon_0\in (\sqrt{1/e},1)$ be given. Let $\mu_\varepsilon$ be given by \eqref{ScalRel} and $t_\varepsilon$ by \eqref{TEps}. Let $\rho_\varepsilon>0$ be given by \eqref{RhoEpsDelta} and satisfying \eqref{RhoEpsDeltaEst}. Let $S_{i,\varepsilon}$ be then given by
\begin{equation}\label{DefSiEps}
S_{i,\varepsilon}(x)=S_i\left(\frac{|x-x_\varepsilon|}{\mu_\varepsilon} \right)\,,
\end{equation}
for $i=0,1,2$. Let $\xi_\varepsilon>0$ be given by \eqref{DefXi}. In (Case 1) where $N_\varepsilon\to +\infty$ as $\varepsilon\to 0$, we get that $\xi_\varepsilon=O(N_\varepsilon^{-1/2})$ by  \eqref{MinorPhiNEps} and \eqref{TechnicalRk}. Then, in any case, we clearly have that
\begin{equation}\label{XiTo0App}
\xi_\varepsilon\to 0
\end{equation}
as $\varepsilon\to 0$. Then we are in position to state the main result of this section.

\begin{prop}\label{ClaimDevBubble}
We have that
\begin{equation}\label{ExpBubble}
B_{\varepsilon}=\gamma_\varepsilon-\frac{t_\varepsilon}{\gamma_\varepsilon}+\frac{S_{0,\varepsilon}}{\gamma_\varepsilon^3}+\frac{S_{1,\varepsilon}}{\gamma_\varepsilon^5}+(A(\gamma_\varepsilon)-2\xi_\varepsilon)\frac{S_{2,\varepsilon}}{\gamma_\varepsilon}+o\left(t_\varepsilon\left(\frac{1}{\gamma_\varepsilon^5}+\frac{|A(\gamma_\varepsilon)|+\xi_\varepsilon}{\gamma_\varepsilon} \right) \right)\,,
\end{equation}
uniformly in $[0,\rho_\varepsilon]$, as $\varepsilon\to 0$.
\end{prop}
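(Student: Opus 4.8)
The statement is a pure ODE fact about the radial model $B_\varepsilon$: since $B_\varepsilon$ is radial, \eqref{B1Eps} is an ordinary differential equation in $r=|x-x_\varepsilon|$, and the plan is to solve it by successive approximations around the standard bubble $T_0$ of \eqref{DefT0}, the correctors being exactly the functions $S_i$ of \eqref{EquationApp}. This follows the scheme of \cite{MalchMartJEMS} and of the computations in \cite{MartMan}, from which \eqref{EquationApp}--\eqref{ExpansionSi} are borrowed, adapted to the possibly non-purely-critical nonlinearities of \textbf{(Case 1)}.

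\emph{Rescaling.} I would first write $y=(x-x_\varepsilon)/\mu_\varepsilon$ and $\tau_\varepsilon(y)=\gamma_\varepsilon\big(\gamma_\varepsilon-B_\varepsilon(x_\varepsilon+\mu_\varepsilon y)\big)$, so that $B_\varepsilon=\gamma_\varepsilon-\tau_\varepsilon/\gamma_\varepsilon$ and $B_\varepsilon^2=\gamma_\varepsilon^2-2\tau_\varepsilon+\tau_\varepsilon^2/\gamma_\varepsilon^2$, and isolate in $\Psi'_{N_\varepsilon}(B_\varepsilon)$, via \eqref{PsiPrime}, the dominant contribution $2B_\varepsilon H(B_\varepsilon)\varphi_{N_\varepsilon-1}(B_\varepsilon^2)$: the remaining terms are $O(B_\varepsilon^3)$, hence negligible relative to it once one knows --- through \eqref{MinorPhiNEps}, \eqref{MinorGammaByNEps}, \eqref{TechnicalRk} and the hypothesis $\varepsilon_0^2>1/e$ --- that $\varphi_{N_\varepsilon-1}(B_\varepsilon^2)\gg B_\varepsilon^2$ on $[0,\rho_\varepsilon]$ (i.e. that $B_{x_\varepsilon}(\rho_\varepsilon)\subset\Omega_\varepsilon$, with $\Omega_\varepsilon$ as in \eqref{DefOmegaEps}; here the term $\tau_\varepsilon^2/\gamma_\varepsilon^2$, carried along exactly, is crucial, since it keeps $B_\varepsilon^2\ge(\varepsilon_0^2+o(1))\gamma_\varepsilon^2$). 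The algebraic identity \eqref{AlgRelat} with $\Gamma=\gamma_\varepsilon^2$, $T=B_\varepsilon^2$, together with \eqref{TechnicalRk4} and a Taylor expansion, then turns $\varphi_{N_\varepsilon-1}(B_\varepsilon^2)/\varphi_{N_\varepsilon-1}(\gamma_\varepsilon^2)$ into $\exp(-2\tau_\varepsilon+\tau_\varepsilon^2/\gamma_\varepsilon^2)$ times a factor $1-2\xi_\varepsilon\tau_\varepsilon+\cdots$; this extra factor, which reflects that $\tfrac{d}{ds}\log\varphi_{N_\varepsilon-1}(s)=1+\xi_\varepsilon$ at $s=\gamma_\varepsilon^2$, is the origin of the correction $-2\xi_\varepsilon$ in \eqref{ExpBubble}, while part b) of \eqref{InftyBehavior} (together with part a)) turns $H(B_\varepsilon)/H(\gamma_\varepsilon)$ into $1+A(\gamma_\varepsilon)\tau_\varepsilon+\cdots$. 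Using the scaling relation \eqref{ScalRel} one lands on an equation of the form
\[
\Delta\tau_\varepsilon=-4\Big(1-\frac{\tau_\varepsilon}{\gamma_\varepsilon^2}\Big)\big(1+(A(\gamma_\varepsilon)-2\xi_\varepsilon)\tau_\varepsilon\big)\exp\!\Big(-2\tau_\varepsilon+\frac{\tau_\varepsilon^2}{\gamma_\varepsilon^2}\Big)(1+E_\varepsilon),
\]
where, by part b) of \eqref{InftyBehavior}, \eqref{TechnicalRk} and \eqref{TechnicalRk4}, the error $E_\varepsilon$ is dominated by $\big(\gamma_\varepsilon^{-4}+|A(\gamma_\varepsilon)|+\xi_\varepsilon\big)(1+t_\varepsilon)^{C}\exp(-\delta t_\varepsilon)$ for some $\delta>0$, uniformly on $\{t_\varepsilon\le(1-\varepsilon_0)\gamma_\varepsilon^2\}$. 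Linearizing the right-hand side at $\tau_0=T_0$ produces exactly the operator $\Delta-8\exp(-2T_0)$, and collecting in the formal expansion $\tau_\varepsilon=T_0-\gamma_\varepsilon^{-2}S_0-\gamma_\varepsilon^{-4}S_1-(A(\gamma_\varepsilon)-2\xi_\varepsilon)S_2+\cdots$ the terms of respective orders $\gamma_\varepsilon^{-2}$, $\gamma_\varepsilon^{-4}$ and $A(\gamma_\varepsilon)-2\xi_\varepsilon$ reproduces precisely the three inhomogeneous right-hand sides of \eqref{EquationApp}. This motivates the ansatz \eqref{ExpBubble}.

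\emph{From a rough to the precise expansion.} Since $\tau_\varepsilon(0)=0$, $\tau_\varepsilon'(0)=0$ and $|\Delta\tau_\varepsilon|\le C$ locally, ODE theory and the Chen--Li classification \cite{ChenLi} give $\tau_\varepsilon\to T_0$ in $C^{1,\theta}_{loc}$, exactly as in Step \ref{StMinor}; a continuity/bootstrap argument on the radial ODE --- on $\{t_\varepsilon\le(1-\varepsilon_0)\gamma_\varepsilon^2\}$ the displayed right-hand side stays within a $(1+o(1))$ factor of $\Delta T_0=-4\exp(-2T_0)$, whose radial primitives have integrable tails --- propagates $\tau_\varepsilon=t_\varepsilon(1+o(1))$ uniformly up to $\rho_\varepsilon$, which is \eqref{DevBubbleRough}, and shows that $B_\varepsilon$ is well defined and radially decreasing on $[0,\rho_\varepsilon]$, with $B_\varepsilon\ge(\varepsilon_0+o(1))\gamma_\varepsilon$ there. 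To upgrade this to \eqref{ExpBubble}, I would set $R_\varepsilon:=B_\varepsilon-\big(\gamma_\varepsilon-\tfrac{t_\varepsilon}{\gamma_\varepsilon}+\tfrac{S_{0,\varepsilon}}{\gamma_\varepsilon^3}+\tfrac{S_{1,\varepsilon}}{\gamma_\varepsilon^5}+(A(\gamma_\varepsilon)-2\xi_\varepsilon)\tfrac{S_{2,\varepsilon}}{\gamma_\varepsilon}\big)$, so $R_\varepsilon(0)=R_\varepsilon'(0)=0$; substituting into \eqref{B1Eps} and using that the $S_i$ solve \eqref{EquationApp}, one gets a scalar radial ODE $\Delta R_\varepsilon=c_\varepsilon R_\varepsilon+h_\varepsilon$ on $[0,\rho_\varepsilon]$, with $c_\varepsilon=\tfrac{8}{\mu_\varepsilon^2}\exp(-2t_\varepsilon)(1+o(1))$ (from $\tfrac{\lambda_\varepsilon}{2}\Psi''_{N_\varepsilon}(B_\varepsilon)\sim 2\lambda_\varepsilon\gamma_\varepsilon^2 H(\gamma_\varepsilon)\varphi_{N_\varepsilon-1}(\gamma_\varepsilon^2)\exp(-2t_\varepsilon)$ and \eqref{ScalRel}), and with $|h_\varepsilon|\le \tfrac{C}{\mu_\varepsilon^2}\big(\gamma_\varepsilon^{-6}+\gamma_\varepsilon^{-2}(|A(\gamma_\varepsilon)|+\xi_\varepsilon)\big)(1+t_\varepsilon)^{C}\exp(-(1+\varepsilon_0)t_\varepsilon)$ on $[0,\rho_\varepsilon]$, the exponent $1+\varepsilon_0$ rather than $2$ being what survives of $\exp(-2t_\varepsilon+t_\varepsilon^2/\gamma_\varepsilon^2)$ there since $\varepsilon_0<1$. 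Solving this equation by variation of parameters from $r=0$, using the two explicit independent homogeneous solutions in the $y$-variable --- the bounded one $\tfrac{1-|y|^2}{1+|y|^2}$ coming from the dilation invariance of the Liouville equation, and a second, logarithmically growing one --- and the fact that the weights $(1+t_\varepsilon)^{C}\exp(-\delta t_\varepsilon)$ are integrable against $r\,dr$ uniformly, yields $|R_\varepsilon|=o\!\big(t_\varepsilon\big(\gamma_\varepsilon^{-5}+\gamma_\varepsilon^{-1}|A(\gamma_\varepsilon)|+\gamma_\varepsilon^{-1}\xi_\varepsilon\big)\big)$ uniformly on $[0,\rho_\varepsilon]$, i.e. \eqref{ExpBubble}; a first crude pass followed by a second with the sharp constants absorbs the $R_\varepsilon$ hidden in $c_\varepsilon R_\varepsilon$ and in the quadratic remainder.

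\emph{Main obstacle.} The genuinely delicate part --- and the feature that distinguishes this from the purely critical Moser--Trudinger analysis of \cites{AdimurthiDruet,DruetDuke,DruThiI} --- is \textbf{(Case 1)}: there $\Psi'_{N_\varepsilon}$ is \emph{not} of uniform critical growth (Remark \ref{RemLaneEmden}), so it cannot be replaced by $2t\exp(t^2)$; one must retain the $\varphi_{N_\varepsilon-1}$-structure throughout, which is exactly where \eqref{MinorPhiNEps}--\eqref{MinorGammaByNEps}, \eqref{TechnicalRk}, \eqref{AlgRelat}, \eqref{TechnicalRk4} and the condition $\varepsilon_0^2>1/e$ intervene, and which is the source of the correction $-2\xi_\varepsilon$. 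A second point requiring care is that the error must be controlled \emph{uniformly} out to $\rho_\varepsilon$, where $t_\varepsilon$ reaches $(1-\varepsilon_0)\gamma_\varepsilon^2$ and the quadratic term $\tau_\varepsilon^2/\gamma_\varepsilon^2$ is no longer negligible but has to be carried along exactly: this is what forces the choice $\varepsilon_0<1$ (with $\varepsilon_0^2>1/e$) and the loss of the extra factor $t_\varepsilon$ in the remainder of \eqref{ExpBubble}.
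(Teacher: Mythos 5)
Your proposal follows essentially the same route as the paper: the same ansatz with the correctors $S_0,S_1,S_2$ of \eqref{EquationApp}, the same identification of the $-2\xi_\varepsilon$ correction as coming from $\frac{d}{ds}\log\varphi_{N_\varepsilon-1}(s)=1+\xi_\varepsilon$ at $s=\gamma_\varepsilon^2$, and the same continuity/bootstrap scheme for the remainder on $[0,\rho_\varepsilon]$. The only cosmetic difference is in the linear step: the paper estimates the remainder $\bar w_\varepsilon$ of \eqref{DefBarWApp} through the radial identity \eqref{ObservationWApp} together with a blow-up argument at scale $\mu_\varepsilon$ (using that the only radial solution of $\Delta\tilde w_0=8\exp(-2T_0)\tilde w_0$ with $\tilde w_0(0)=0$ is zero), rather than explicit variation of parameters; the two are equivalent.

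The one point where your sketch is too optimistic is the claimed pointwise bound $|E_\varepsilon|\lesssim\left(\gamma_\varepsilon^{-4}+|A(\gamma_\varepsilon)|+\xi_\varepsilon\right)(1+t_\varepsilon)^{C}\exp(-\delta t_\varepsilon)$ on $\{t_\varepsilon\le(1-\varepsilon_0)\gamma_\varepsilon^2\}$. The deviation $F_\varepsilon$ of $\varphi_{N_\varepsilon-1}(B_\varepsilon^2)/\varphi_{N_\varepsilon-1}(\gamma_\varepsilon^2)$ from $\exp(B_\varepsilon^2-\gamma_\varepsilon^2)$ in \eqref{CrucQuotApp} does not admit such a uniform, exponentially decaying pointwise bound in all regimes of $N_\varepsilon$ versus $\gamma_\varepsilon^2$: when $(\gamma_\varepsilon^2-\tilde N_\varepsilon)/\sqrt{\tilde N_\varepsilon}\to+\infty$ with $\tilde N_\varepsilon/\gamma_\varepsilon^2\to\beta_0<1$, the relative error $F_\varepsilon/\exp(B_\varepsilon^2-\gamma_\varepsilon^2)$ grows like $\exp(2(1-\beta_0)t_\varepsilon)$ up to lower-order factors. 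This is precisely why the paper introduces the dichotomy \eqref{CasesEstimFApp}, settles in (Case A) for the integrated bounds \eqref{IntegEst} and \eqref{IntegEstA2} together with $\xi_\varepsilon=o(\gamma_\varepsilon^{-4})$ (so that the $\xi_\varepsilon$-corrector is itself absorbed into the remainder there), and reserves the clean expansion $F_\varepsilon=2t_\varepsilon\xi_\varepsilon\exp(-2t_\varepsilon(1+o(1)))$ for (Case B) with $t_\varepsilon=o(\gamma_\varepsilon)$. Since your variation-of-parameters step only ever needs the source integrated against $r\,dr$, the strategy survives, but this case analysis is the actual content of the proof in \textbf{(Case 1)} and cannot be replaced by a single pointwise estimate; as written, that step of your argument would fail.
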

 In particular, using also \eqref{Propg} and \eqref{EqHBehavior}, it can be checked that $B_\varepsilon$ is positive and radially decreasing in $[0,\rho_\varepsilon]$. Observe also that $\xi_\varepsilon\ll \gamma_\varepsilon^{-4}$ can be seen as a remainder term in $(\text{Case 2})$. Let $\zeta_\varepsilon>0$ be given by
\begin{equation}\label{DefZetaApp}
\zeta_\varepsilon=\max\left({\frac{1}{\gamma_\varepsilon^4},|A(\gamma_\varepsilon)|,\xi_\varepsilon}\right)\,.
\end{equation}
 Resuming the computations below, we get as a by product of Proposition \ref{ClaimDevBubble} that, $v_\varepsilon=o\left(\frac{ t_\varepsilon}{\gamma_\varepsilon^5} \right)$ implies that
  \begin{equation}\label{LaplBPert}
 \begin{split}
  &\frac{\lambda_\varepsilon \Psi'_\varepsilon(B_\varepsilon+v_\varepsilon)}{2}=\frac{4~\exp(-2t_\varepsilon)}{\mu_\varepsilon^2 \gamma_\varepsilon}\Bigg[1+\frac{(\Delta S_0)\left(\frac{\cdot-x_\varepsilon}{\mu_\varepsilon} \right)}{\gamma_\varepsilon^2}+\frac{(\Delta S_1)\left(\frac{\cdot-x_\varepsilon}{\mu_\varepsilon} \right)}{\gamma_\varepsilon^4}\\
 &\quad\quad\quad\quad\quad\quad\quad+\left(A(\gamma_\varepsilon)-2\xi_\varepsilon \right)(\Delta S_2)\left(\frac{\cdot-x_\varepsilon}{\mu_\varepsilon} \right)+o\left(\zeta_\varepsilon \exp(\tilde{\delta}_0 t_\varepsilon) \right)\Bigg]\,,
 \end{split}
 \end{equation}
 uniformly in $\{y\text{ s.t. }t_\varepsilon(y)\le \gamma_\varepsilon\}$, for some given {$\tilde{\delta}_0\in(\delta_0,1)$}, for $\delta_0$ as in \eqref{InftyBehavior}. 
 
 \begin{proof}[Proof of Proposition \ref{ClaimDevBubble}] Since both arguments are very similar to prove $(\text{Case 1})$ and (Case 2), for the sake of readability, we only write the proof of Claim \ref{ClaimDevBubble} in the more delicate (Case 1). Then, assume that we are in (Case 1). We let $\tau_\varepsilon$ be given by
\begin{equation}\label{DefTauApp}
B_\varepsilon=\gamma_\varepsilon-\frac{\tau_\varepsilon}{\gamma_\varepsilon}\,.
\end{equation} 
Let $\bar{w}_\varepsilon$ be given by
\begin{equation}\label{DefBarWApp}
B_\varepsilon=\gamma_\varepsilon-\frac{t_\varepsilon}{\gamma_\varepsilon}+\frac{S_{0,\varepsilon}}{\gamma_\varepsilon^3}+\frac{S_{1,\varepsilon}}{\gamma_\varepsilon^5}+(A(\gamma_\varepsilon)-2\xi_\varepsilon)\frac{S_{2,\varepsilon}}{\gamma_\varepsilon}+\frac{\zeta_\varepsilon \bar{w}_\varepsilon}{\gamma_\varepsilon}\,.
\end{equation}
  Let $\bar{\delta}>0$ be fixed and let $\bar{r}_\varepsilon\ge 0$ be given by
 \begin{equation}\label{DefRBarApp}
 \bar{r}_\varepsilon=\sup\left\{r>0\text{ s.t. }|\bar{w}_\varepsilon|\le \bar{\delta} t_\varepsilon \text{ in }[0,r] \right\}\,.
 \end{equation}
Now, since $\bar{\delta}>0$ may be arbitrarily small, in order to get Claim \ref{ClaimDevBubble}, it is sufficient to prove that $\bar{r}_\varepsilon=\rho_\varepsilon$, for all $0<\varepsilon\ll 1$. Using \eqref{DefRBarApp}, we perform computations in $[0,\bar{r}_\varepsilon]$ and the subsequent $o(1)$ are uniformly small in this set as $\varepsilon\to 0$. First, by \eqref{InftyBehavior}, \eqref{ExpansionSi}, \eqref{XiTo0App} and \eqref{DefRBarApp}, we have that
\begin{equation}\label{EstimTauApp}
\tau_\varepsilon=t_\varepsilon(1+o(1))\,.
\end{equation}
Observe that, as soon as we have $\Delta B_\varepsilon >0$ in $[0,\bar{r}_\varepsilon]$, then $B_\varepsilon$ is radially decreasing and \eqref{BLeGamma} holds true in $[0,\bar{r}_\varepsilon]$. Let $L^H_{\varepsilon}$ and $L_\varepsilon^g$ be given by 
\begin{equation}\label{DefLHLgApp}
H(B_\varepsilon)=H(\gamma_\varepsilon)\left(1+L^H_\varepsilon \right)\text{ and then, }(1+g(B_\varepsilon))=H(\gamma_\varepsilon)\left(1+L^H_\varepsilon+L^g_\varepsilon \right)\,.
\end{equation}
In view of \eqref{DefTauApp} and \eqref{EstimTauApp}, estimates of $L^H_\varepsilon,L^g_\varepsilon$ are given by \eqref{InftyBehavior} and \eqref{AsymptG}, respectively. We are now in position to expand the right-hand side of \eqref{B1Eps}. From now on, it is convenient to denote
\begin{equation}\label{NTilde}
\tilde{N}_\varepsilon=N_\varepsilon-1\,.
\end{equation}
 Going back to \eqref{PsiPrime}, we have that
\begin{equation}\label{ExpanPsi1App}
\frac{\Psi'_{N_\varepsilon}(B_\varepsilon)}{2}=B_\varepsilon H(\gamma_\varepsilon)\left[\left(1+L^H_\varepsilon \right)\left(1+\varphi_{\tilde{N}_\varepsilon}(B_\varepsilon^2) \right)+L^g_\varepsilon \left(\frac{B_\varepsilon^{2N_\varepsilon}}{N_\varepsilon !}-B_\varepsilon^2 \right)\right]
\end{equation}
By \eqref{RhoEpsDelta}, \eqref{DefTauApp} and \eqref{EstimTauApp} and since $\bar{r}_\varepsilon\le \rho_\varepsilon$, we have that \begin{equation}\label{MinorBApp}
\min_{[0,\bar{r}_\varepsilon]}B_\varepsilon\ge (\varepsilon_0+o(1))\gamma_\varepsilon \to +\infty
\end{equation}
as $\varepsilon\to 0$. Thus, by Stirling's formula, we get that 
$$B_\varepsilon^{2N_\varepsilon}/(N_\varepsilon !)\ge \exp\left(N_\varepsilon\left(\log\frac{\gamma_\varepsilon^2}{N_\varepsilon}+\left(\log \varepsilon_0^2+1\right)+o(1) \right)\right)$$
and then, for all given integer $k\ge 0$, that
 \begin{equation}\label{CompApp}
B_\varepsilon^k=o(1)\times \frac{B_\varepsilon^{2N_\varepsilon}}{N_\varepsilon !}
 \end{equation}
in $[0,\bar{r}_\varepsilon]$, as $\varepsilon\to 0$, using $\varepsilon_0^2>{1/e}$ with \eqref{MinorGammaByNEps}. Similarly, for all given integer $k\ge 0$, we have that 
\begin{equation}\label{EasyCompApp}
\frac{B_\varepsilon^k}{\varphi_{{N}_\varepsilon}(B_\varepsilon^2)}=o(1)
\end{equation}
 in $[0,\bar{r}_\varepsilon]$, as $\varepsilon\to 0$. Then, by \eqref{ScalRel}, \eqref{DefTauApp}, \eqref{EasyCompApp} and \eqref{CompApp}, we may rewrite \eqref{ExpanPsi1App} as
\begin{equation}\label{ExpanPsi2App}
\begin{split}
\frac{\lambda_\varepsilon \Psi'_{N_\varepsilon}(B_\varepsilon)}{2}= &\frac{4}{\mu_\varepsilon^2 \gamma_\varepsilon}\left(1-\frac{\tau_\varepsilon}{\gamma_\varepsilon^2} \right) \Bigg[O(\exp(-\gamma_\varepsilon^2))+\frac{\varphi_{\tilde{N}_\varepsilon}(B_\varepsilon^2)}{\varphi_{\tilde{N}_\varepsilon}(\gamma_\varepsilon^2)}\times\\
&\left(1+L^H_\varepsilon+O\left(\frac{B_\varepsilon^{2N_\varepsilon}}{N_\varepsilon!~ \varphi_{\tilde{N}_\varepsilon}(B_\varepsilon^2)}L^g_\varepsilon\right)\right)\Bigg]
\end{split}
\end{equation}
in $[0,\bar{r}_\varepsilon]$, as $\varepsilon\to 0$. Indeed, by \eqref{MinorBApp}, we have that
\begin{equation}\label{ContrL}
L^H_\varepsilon=o(1)\text{ and }L^g_\varepsilon=o(1)
\end{equation}
in $[0,\bar{r}_\varepsilon]$ as $\varepsilon\to 0$, using \eqref{Propg}, \eqref{EqHBehavior} and \eqref{DefLHLgApp}. In \eqref{ExpanPsi2App}, the term $O(\exp(-\gamma_\varepsilon^2))$ equals $(1+L^H_\varepsilon)/\varphi_{\tilde{N}_\varepsilon}(\gamma_\varepsilon^2)$ and we thus get this control by \eqref{MinorPhiNEps} and \eqref{ContrL}. In the following lines, we expand the terms of \eqref{ExpanPsi2App}. By \eqref{AlgRelat} with $\Gamma=\gamma_\varepsilon^2$ and $T=B_\varepsilon^2$, we get that
\begin{equation}\label{CrucQuotApp}
\begin{split}
\frac{\varphi_{\tilde{N}_\varepsilon}(B_\varepsilon^2)}{\varphi_{\tilde{N}_\varepsilon}(\gamma_\varepsilon^2)}=\exp(B_\varepsilon^2-\gamma_\varepsilon^2)-F_\varepsilon\,,
\end{split}
\end{equation}
where $F_\varepsilon$ satisfies in $[0,\bar{r}_\varepsilon]$ 
\begin{equation}\label{DefFApp}
\begin{split}
F_\varepsilon &=\frac{B_\varepsilon^{2\tilde{N}_\varepsilon}}{\tilde{N}_\varepsilon ! \varphi_{\tilde{N}_\varepsilon}(\gamma_\varepsilon^2)}\int_0^{\gamma_\varepsilon^2-B_\varepsilon^2}\exp\left(-u\right) {\left(1+\frac{u}{B_\varepsilon^2}\right)^{\tilde{N}_\varepsilon}}du\,,\\
&=\frac{\exp(B_\varepsilon^2)}{\varphi_{\tilde{N}_\varepsilon}(\gamma_\varepsilon^2)} \int_{B_\varepsilon^2}^{\gamma_\varepsilon^2} \exp(-s)\frac{s^{\tilde{N}_\varepsilon}}{\tilde{N}_\varepsilon !} ds\,,\\
&=\xi_\varepsilon \exp(B_\varepsilon^2-\gamma_\varepsilon^2) \int_{B_\varepsilon^2-\gamma_\varepsilon^2}^0 \exp(-y)\left(1+\frac{y}{\gamma_\varepsilon^2} \right)^{\tilde{N}_\varepsilon} dy\,.
\end{split}
\end{equation}
By \eqref{DefTauApp} and \eqref{DefBarWApp}, we may write
$$\tau_\varepsilon=t_\varepsilon-\frac{S_{0,\varepsilon}}{\gamma_\varepsilon^2}-\frac{S_{1,\varepsilon}}{\gamma_\varepsilon^4}-\left(A(\gamma_\varepsilon)-2\xi_\varepsilon \right)S_{2,\varepsilon}-\zeta_\varepsilon \bar{w}_\varepsilon \,.$$
We set $\bar{t}_\varepsilon=1+t_\varepsilon$. Then, keeping in mind \eqref{ExpansionSi}, \eqref{XiTo0App}, \eqref{DefRBarApp},  \eqref{EstimTauApp} and $t_\varepsilon\le \gamma_\varepsilon^2$, we may compute
\begin{equation}\label{Interm1App}
\begin{split}
&\exp(B_\varepsilon^2-\gamma_\varepsilon^2)\\
&=\exp\left(-2\tau_\varepsilon+\frac{\tau_\varepsilon^2}{\gamma_\varepsilon^2}\right)\\
&= \exp\Bigg[-2\tau_\varepsilon+\frac{1}{\gamma_\varepsilon^2}\left(t_\varepsilon^2-\frac{2t_\varepsilon S_{0,\varepsilon}}{\gamma_\varepsilon^2}+O\left(\zeta_\varepsilon \bar{t}_\varepsilon^2 \right) \right)\Bigg]
\end{split}
\end{equation}
in $[0,\bar{r}_\varepsilon]$, as $\varepsilon\to 0$. Observe that
\begin{equation}\label{ExpoRemApp}
\left|\exp(y)-\sum_{j=0}^{N} \frac{y^j}{j!} \right|\le \frac{|y|^{N+1}}{(N+1)!}\exp(|y|)\,, 
\end{equation}
for all $y\in\mathbb{R}$ and all integer $N\ge 0$. Then we draw from \eqref{Interm1App} that
\begin{equation}\label{Interm2App}
\begin{split}
&\left(1-\frac{\tau_\varepsilon}{\gamma_\varepsilon^2} \right)\exp(B_\varepsilon^2-\gamma_\varepsilon^2)\\
&=\exp(-2t_\varepsilon)\Bigg[1+\frac{1}{\gamma_\varepsilon^2}\left(2S_{0,\varepsilon}+t_\varepsilon^2-t_\varepsilon \right)+\\
&\frac{1}{\gamma_\varepsilon^4}\left(2S_{1,\varepsilon}+2 S_{0,\varepsilon}^2+\frac{t_\varepsilon^4}{2}+2S_{0,\varepsilon}t_\varepsilon^2-4S_{0,\varepsilon} t_\varepsilon-t_\varepsilon^3+S_{0,\varepsilon}\right)\\
&+2\left(A(\gamma_\varepsilon)-2\xi_\varepsilon \right)S_{2,\varepsilon}+2\zeta_\varepsilon \bar{w}_\varepsilon\\
&+O\left(\left(\frac{\bar{t}_\varepsilon^6}{\gamma_\varepsilon^6}+\frac{\zeta_\varepsilon \bar{t}_\varepsilon^3}{\gamma_\varepsilon^2}+\zeta_\varepsilon^2\bar{t}_\varepsilon^3 \right)\exp\left(o(t_\varepsilon)+\frac{t_\varepsilon^2}{\gamma_\varepsilon^2} \right) \right)\Bigg]
\end{split}
\end{equation}
in $[0,\bar{r}_\varepsilon]$, as $\varepsilon\to 0$. Independently, by \eqref{MinorPhiNEps}, \eqref{TechnicalRk}, \eqref{DefTauApp}, \eqref{DefRBarApp}, \eqref{EstimTauApp} and since $B_\varepsilon(x_\varepsilon)=\gamma_\varepsilon$, for all given $R>0$, we have that
\begin{equation}\label{OtherEstimatesApp}
\begin{split}
&\quad\quad\quad\left\|\frac{B_\varepsilon^{2\tilde{N}_\varepsilon}}{\tilde{N}_\varepsilon!~ \varphi_{\tilde{N}_\varepsilon}(B_\varepsilon^2)}+\frac{B_\varepsilon^{2N_\varepsilon}}{N_\varepsilon!~ \varphi_{\tilde{N}_\varepsilon}(B_\varepsilon^2)}\right\|_{L^\infty([0,\min(R \mu_\varepsilon,\bar{r}_\varepsilon)]}=O\left(\frac{1}{\sqrt{N_\varepsilon}}\right)\\
&\text{and}\\
&\quad\quad\quad\frac{B_\varepsilon^{2N_\varepsilon}}{N_\varepsilon!~ \varphi_{\tilde{N}_\varepsilon}(B_\varepsilon^2)}\le 1\,,
\end{split}
\end{equation}
in $[0,\bar{r}_\varepsilon]$, the second inequality being obvious by \eqref{DefPhiN} and \eqref{NTilde}. In the sequel, by \eqref{MinorGammaByNEps}, we may assume that
\begin{equation}\label{ConvBetaApp}
\beta_\varepsilon:=\frac{\tilde{N}_\varepsilon}{\gamma_\varepsilon^2}\text{ satisfies }\lim_{\varepsilon\to 0} \beta_\varepsilon=\beta_0\in [0,1]\,,
\end{equation}
up to a subsequence. Now, we give estimates for $F_\varepsilon$ given in \eqref{DefFApp}. Up to a subsequence, we can split our results according to the following two cases
\begin{equation}\label{CasesEstimFApp}
\begin{split}
& \text{Case A: } \lim_{\varepsilon\to 0} \frac{\gamma_\varepsilon^2-\tilde{N}_\varepsilon}{\sqrt{\tilde{N}_\varepsilon}}=+\infty\,, \\
&\text{Case B: } \frac{\gamma_\varepsilon^2-\tilde{N}_\varepsilon}{\sqrt{\tilde{N}_\varepsilon}}=O(1)\,.
\end{split}
\end{equation}
Observe that, since we assume \eqref{MinorGammaByNEps}, all the possible situations are considered in \eqref{CasesEstimFApp}. Let $(r_\varepsilon)_\varepsilon$ be any sequence such that 
\begin{equation}\label{CondRApp}
r_\varepsilon\in[0,\bar{r}_\varepsilon]
\end{equation}
 for all $\varepsilon$. We prove that, in (Case A):
\begin{equation}\label{EstimFCaseA}
\begin{split}
F_\varepsilon(r_\varepsilon)=
\begin{cases}
&O\left( \xi_\varepsilon \gamma_\varepsilon\exp(-2 t_\varepsilon(r_\varepsilon) (\beta_0+o(1))) \right)\,, \text{ if }B_\varepsilon(r_\varepsilon)^2\ge \tilde{N}_\varepsilon+\sqrt{\tilde{N}_\varepsilon}\,, \\
&O\left(\exp\left(-(1+\varepsilon_0+o(1))t_\varepsilon(r_\varepsilon) \right) \right)\,,\text{ if }B_\varepsilon(r_\varepsilon)^2< \tilde{N}_\varepsilon+\sqrt{\tilde{N}_\varepsilon}\,, \\
\end{cases}
\end{split}
\end{equation}
while we get in (Case B):
\begin{equation}\label{EstimFCaseB}
\begin{split}
F_\varepsilon(r_\varepsilon)=
\begin{cases}
&2t_\varepsilon(r_\varepsilon)\xi_\varepsilon \exp(-2t_\varepsilon(r_\varepsilon)(1+o(1)))\,, \text{ if }t_\varepsilon(r_\varepsilon)=o(\gamma_\varepsilon) \,,\\
&O\left(t_\varepsilon(r_\varepsilon)\xi_\varepsilon\exp\left(-(1+\varepsilon_0+o(1))t_\varepsilon(r_\varepsilon) \right)\right)\text{ if }\gamma_\varepsilon=O\left(t_\varepsilon(r_\varepsilon) \right)\,.
\end{cases}
\end{split}
\end{equation}
Now we prove \eqref{EstimFCaseA}. We start with the first estimate of \eqref{EstimFCaseA}. Then, we assume that $B_\varepsilon(r_\varepsilon)^2\ge \tilde{N}_\varepsilon+\sqrt{\tilde{N}_\varepsilon}$, and thus in particular that
\begin{equation}\label{InterEstimCaseAApp}
1-\frac{\tilde{N}_\varepsilon}{B_\varepsilon(r_\varepsilon)^2}\ge \frac{1+o(1)}{\sqrt{\tilde{N}_\varepsilon}}\,.
\end{equation}
Writing now $F_\varepsilon$ according to the first formula of \eqref{DefFApp}, using \eqref{BLeGamma}, \eqref{MinorBApp} and
\begin{equation}\label{MinorLogApp}
\log(1+t)\le t \text{ for all }t>-1\,,
\end{equation}
we get first  that
\begin{equation}\label{InterEstimCaseAApp2}
F_\varepsilon(r_\varepsilon)\le \xi_\varepsilon \exp(-2\tau_\varepsilon(r_\varepsilon)\beta_\varepsilon)\int_0^{\gamma_\varepsilon^2-B_\varepsilon^2}\exp\left(-y\left(1-\frac{\tilde{N}_\varepsilon}{B_\varepsilon(r_\varepsilon)^2} \right) \right) dy\,,
\end{equation}
and conclude the proof of the first estimate of \eqref{EstimFCaseA}, by \eqref{MinorGammaByNEps}, \eqref{EstimTauApp} and \eqref{InterEstimCaseAApp}. In order to prove the second estimate of \eqref{EstimFCaseA}, it is sufficient to write $F_\varepsilon$ according to the second formula of \eqref{DefFApp}, to check that
$$\int_{\mathbb{R}} \exp(-s)\frac{s^{\tilde{N}_\varepsilon}}{\tilde{N}_\varepsilon !} ds=1\,, $$
 that $r_\varepsilon\le\bar{r}_\varepsilon\le \rho_\varepsilon$ imply
\begin{equation}\label{TrivIneqApp}
t_\varepsilon(\bar{r}_\varepsilon)\le (1-\varepsilon_0)\gamma_\varepsilon^2\,,
\end{equation} 
and to use \eqref{DefTauApp}, \eqref{EstimTauApp} and \eqref{MinorPhiNEps}. Now we turn to the proof of \eqref{EstimFCaseB}. Then, we assume that (Case B) in \eqref{CasesEstimFApp} holds true and in particular that 
\begin{equation}\label{Triv2InApp}
1-\beta_\varepsilon=O\left(\frac{1}{\gamma_\varepsilon} \right)\text{ in (Case B)}\,.
\end{equation}
 Writing $F_\varepsilon$ according to the third estimate of \eqref{DefFApp}, we get that
\begin{equation}\label{PfCaseB1}
\begin{split}
F_\varepsilon=& \xi_\varepsilon \exp\left(-\tau_\varepsilon\left(2-\frac{\tau_\varepsilon}{\gamma_\varepsilon^2} \right) \right)(\gamma_\varepsilon^2-B_\varepsilon^2)\times\\
&~~~~~~ \int_0^1 \exp\left((\gamma_\varepsilon^2-B_\varepsilon^2)y+\tilde{N}_\varepsilon \log\left(1-\frac{(\gamma_\varepsilon^2-B_\varepsilon^2)y}{\gamma_\varepsilon^2} \right) \right) dy
\end{split}
\end{equation}
at $r_\varepsilon$. Expanding the $\log$, we easily get the first estimate of \eqref{EstimFCaseB} from \eqref{EstimTauApp}, \eqref{Triv2InApp}, \eqref{PfCaseB1} and the assumption $t_\varepsilon(r_\varepsilon)=o(\gamma_\varepsilon)$. The second estimate of \eqref{EstimFCaseB} can also be obtained from \eqref{PfCaseB1} by \eqref{EstimTauApp}, \eqref{MinorLogApp}, \eqref{TrivIneqApp} and \eqref{Triv2InApp}. This concludes the proof of \eqref{EstimFCaseB}. Now, we prove that, in (Case A) of \eqref{CasesEstimFApp}, we have that
\begin{equation}\label{IntegEst}
\int_0^{\bar{r}_\varepsilon} F_\varepsilon(r) r dr=o\left(\frac{\mu_\varepsilon^2}{\gamma_\varepsilon^4}  \right)\,.
\end{equation}
Since $r_\varepsilon\le \rho_\varepsilon$, we get from \eqref{DefXi}, \eqref{MinorPhiNEps}, \eqref{MinorGammaByNEps}, \eqref{EstimFCaseA} and by Stirling's formula that
\begin{equation}\label{IntegEstA1}
\begin{split}
&\int_{\left\{r\in[0,\bar{r}_\varepsilon],B_\varepsilon(r)^2\ge \tilde{N}_\varepsilon+\sqrt{\tilde{N}_\varepsilon}\right\}} F_\varepsilon(r) r dr\\
&\lesssim \exp\left(\gamma_\varepsilon^2[f(\beta_\varepsilon)+O((\log\gamma_\varepsilon)/\gamma_\varepsilon^2)] \right)\times \\
&\begin{cases}
&\mu_\varepsilon^2 \text{ if }\beta_0>1/2\,,\\
&\mu_\varepsilon^2 \exp(\gamma_\varepsilon^2(1-\varepsilon_0)(1-2\beta_0+o(1)))\text{ if }\beta_0\le 1/2\,,
\end{cases}
\end{split}
\end{equation}
where $f$ is the continuous function in $[0,1]$ given for $\beta\in (0,1]$ by
$$f(\beta)=\beta \log\frac{1}{\beta}+\beta-1 \,.$$
Independently, since $\bar{r}_\varepsilon\le \rho_\varepsilon$, if $$r_\varepsilon\in  J_\varepsilon:=\left\{r\in[0,\bar{r}_\varepsilon],B_\varepsilon(r)^2< \tilde{N}_\varepsilon+\sqrt{\tilde{N}_\varepsilon}\right\}\,,$$
then $J_\varepsilon\neq \emptyset$ and $\gamma_\varepsilon^2\lesssim\tilde{N}_\varepsilon$, by \eqref{DefTauApp}, \eqref{EstimTauApp} and \eqref{TrivIneqApp}. Thus we have that $$\gamma_\varepsilon\lesssim \sqrt{\tilde{N}_\varepsilon}\ll t_\varepsilon(r_\varepsilon)\,,$$ using that we are in (Case A) for the last estimate. Then, we get from \eqref{EstimFCaseA} that
\begin{equation}\label{IntegEstA2}
\int_{J_\varepsilon} F_\varepsilon(r) r dr\lesssim \int_{\{r\le \rho_\varepsilon,t_\varepsilon\ge \gamma_\varepsilon\}}\exp\left(-(1+\varepsilon_0+o(1))t_\varepsilon(r) \right) r dr=o\left(\frac{\mu_\varepsilon^2}{\gamma_\varepsilon^4} \right)\,.
\end{equation}
Observe that $f$ and $\beta\mapsto f(\beta)+(1-2\beta)/2$ are negative in $[0,1)$ and $[0,1/2]$ respectively. Moreover, because of (Case A) and by \eqref{MinorGammaByNEps}, we can check that $$\beta_\varepsilon=\frac{\tilde{N}_\varepsilon}{\gamma_\varepsilon^2}\le \frac{1}{1+\frac{1}{\sqrt{\tilde{N}_\varepsilon}}}\le 1-\frac{1+o(1)}{\sqrt{\tilde{N}_\varepsilon}}\le 1-\frac{1+o(1)}{\gamma_\varepsilon}\,,$$ since $\gamma_\varepsilon^2\ge \tilde{N}_\varepsilon+\sqrt{\tilde{N}_\varepsilon}$, and then that 
\begin{equation}\label{ConcludCaseA}
0<-f(\beta_\varepsilon)\lesssim 1/\gamma_\varepsilon\,.
\end{equation}
 Thus, we get \eqref{IntegEst} from the first estimate of \eqref{IntegEstA1} with \eqref{ConcludCaseA}, from the second estimate of \eqref{IntegEstA1} with $1-\varepsilon_0<1-\sqrt{1/e}<1/2$ and from \eqref{IntegEstA2}. Computing as in \eqref{IntegEstA1}, we get also that
 \begin{equation}\label{EstimXiCaseAApp}
 \xi_\varepsilon=o\left(\frac{1}{\gamma_\varepsilon^4}\right)
\end{equation}  
in (Case A) (see \eqref{ConcludCaseA}). By \eqref{EstimTauApp} and the second part of \eqref{OtherEstimatesApp}, using that $\bar{r}_\varepsilon\le \rho_\varepsilon$, we may rewrite \eqref{ExpanPsi2App} as
\begin{equation}\label{ExpanPsi3App}
\begin{split}
\frac{\lambda_\varepsilon \Psi'_{N_\varepsilon}(B_\varepsilon)}{2}= &\frac{4}{\mu_\varepsilon^2 \gamma_\varepsilon} \Bigg[\left(1-\frac{\tau_\varepsilon}{\gamma_\varepsilon^2}+L_\varepsilon^H \right)\exp(B_\varepsilon^2-\gamma_\varepsilon^2)-F_\varepsilon\\
&+O\left(\frac{t_\varepsilon}{\gamma_\varepsilon^2}|F_\varepsilon|+\exp(-\gamma_\varepsilon^2)\right)\\
&+O\left(\left(\frac{t_\varepsilon}{\gamma_\varepsilon^2}\exp(B_\varepsilon^2-\gamma_\varepsilon^2)+|F_\varepsilon|\right)\left(|L_\varepsilon^H|+|L_\varepsilon^g|\right) \right)\\
&+O\left(|L_\varepsilon^g|\exp(B_\varepsilon^2-\gamma_\varepsilon^2)\frac{B_\varepsilon^{2N_\varepsilon}}{N_\varepsilon ! \varphi_{\tilde{N}_\varepsilon}(B_\varepsilon^2)}\right) \Bigg]\,.
\end{split}
\end{equation}
By \eqref{RhoEpsDeltaEst}, we clearly have that
\begin{equation}\label{RoughConstatApp}
\int_0^{{\rho}_\varepsilon}  \exp(-\gamma_\varepsilon^2) r dr=o\left(\frac{\mu_\varepsilon^2}{\gamma_\varepsilon^4} \right)\,.
\end{equation}
Integrating by parts, observe that $\bar{w}_\varepsilon$ given by \eqref{DefBarWApp} satisfies 
\begin{equation}\label{ObservationWApp}
\bar{w}_\varepsilon(0)=0\text{ and }-r_\varepsilon \bar{w}'_{\varepsilon}(r_\varepsilon)=\int_0^{r_\varepsilon} (\Delta \bar{w}_\varepsilon)~ r dr \,,
\end{equation}
where, still using radial notations, $\bar{w}'_\varepsilon(r)=\frac{d\bar{w}_\varepsilon}{dr}(r)$. Now we estimate $\bar{w}_\varepsilon$ in $[0,\bar{r}_\varepsilon]$, by using \eqref{ObservationWApp}. By \eqref{B1Eps}, \eqref{DefBarWApp} and \eqref{ExpanPsi3App}, we are in position to estimate the RHS of \eqref{ObservationWApp}, for $r_\varepsilon$ still as in \eqref{CondRApp}. Assume first that we are in (Case A) of \eqref{CasesEstimFApp}. By plugging \eqref{InftyBehavior}, \eqref{AsymptG}, \eqref{EquationApp}, \eqref{ExpansionSi}, \eqref{DefLHLgApp}, \eqref{ContrL}, \eqref{Interm2App}, \eqref{OtherEstimatesApp}, \eqref{EstimFCaseA}, \eqref{IntegEst}, \eqref{EstimXiCaseAApp}, \eqref{RoughConstatApp} in \eqref{ExpanPsi3App}, by using the dominated convergence theorem and by coming back to the definition \eqref{DefZetaApp} of $\zeta_\varepsilon$,  we get that
\begin{equation}\label{EqCaseACCl}
\begin{split}
\int_0^{r_\varepsilon} |(\Delta\bar{w}_\varepsilon)| rdr= &O\left(\|\bar{w}'_\varepsilon\|_{L^\infty([0,r_\varepsilon])} \int_0^{r_\varepsilon/\mu_\varepsilon} \frac{\mu_\varepsilon r^2 dr}{(1+r^2)^{1+\varepsilon_0+o(1)}} \right)\\
&~~~~~+o\left(\int_0^{r_\varepsilon/\mu_\varepsilon} \frac{rdr}{(1+r^2)^{1+\varepsilon_0+o(1)}} \right)\,.
\end{split}
\end{equation} 
The first term in the right hand side of \eqref{EqCaseACCl} uses that, for all $r\in[0,r_\varepsilon]$, 
$$ |\bar{w}_\varepsilon(r)| \le r \|\bar{w}'_\varepsilon\|_{L^\infty([0,r_\varepsilon])}\,. $$
Observe now that \eqref{EqCaseACCl} still holds true in (Case B) of \eqref{CasesEstimFApp}, replacing \eqref{EstimFCaseA}, \eqref{IntegEst} and \eqref{EstimXiCaseAApp} by \eqref{EstimFCaseB} in the above argument. Since $\varepsilon_0>1/2$, we clearly get from \eqref{ObservationWApp} and \eqref{EqCaseACCl} that, in (Case A) and in (Case B),
\begin{equation}\label{WSharpEstApp}
\begin{split}
r_\varepsilon |\bar{w}'_\varepsilon(r_\varepsilon)|= &O\left(\|\bar{w}'_\varepsilon\|_{L^\infty([0,r_\varepsilon])} \frac{{\mu_\varepsilon(r_\varepsilon/\mu_\varepsilon)^3}}{1+(r_\varepsilon/\mu_\varepsilon)^3} \right)+o\left(\frac{(r_\varepsilon/\mu_\varepsilon)^2}{1+(r_\varepsilon/\mu_\varepsilon)^2} \right)\,.
\end{split}
\end{equation}
Now we prove that
\begin{equation}\label{LinftyBdApp}
\mu_\varepsilon \|\bar{w}'_\varepsilon\|_{L^\infty([0,\bar{r}_\varepsilon])}=o(1)\,.
\end{equation}
If \eqref{LinftyBdApp} does not hold true, then, by \eqref{WSharpEstApp}, there exists $s_\varepsilon\in [0,\bar{r}_\varepsilon]$ such that $s_\varepsilon=O(\mu_\varepsilon)$, $\mu_\varepsilon=O(s_\varepsilon)$, 
\begin{equation}\label{LaContradApp}
|\bar{w}'_\varepsilon(s_\varepsilon)|=\|\bar{w}'_\varepsilon\|_{L^\infty([0,\bar{r}_\varepsilon])}\text{ and }\limsup_{\varepsilon\to 0}\mu_\varepsilon |\bar{w}'_\varepsilon(s_\varepsilon)|>0\,.
\end{equation}
In particular, up to a subsequence, we may assume that there exists $\alpha_0\in (0,+\infty]$ such that $\bar{r}_\varepsilon/\mu_\varepsilon\to \alpha_0$ as $\varepsilon\to 0$. Let $\tilde{w}_\varepsilon$ be given by $$\tilde{w}_\varepsilon(y)=\bar{w}_\varepsilon(\mu_\varepsilon y)/(\mu_\varepsilon\|\bar{w}'_\varepsilon\|_{L^\infty([0,\bar{r}_\varepsilon])}).$$ By \eqref{WSharpEstApp} and \eqref{LaContradApp}, we get that $(\|(1+\cdot)\tilde{w}'_\varepsilon\|_{L^\infty([0,\bar{r}_\varepsilon/\mu_\varepsilon])})_\varepsilon$ is a bounded sequence. Then, computing as in \eqref{EqCaseACCl} and by radial elliptic theory with \eqref{B1Eps}, we get that $\tilde{w}_\varepsilon\to \tilde{w}_0$ in $C^2([0,\alpha_0])$ if $\alpha_0<+\infty$ or in $C^2_{loc}([0,\alpha_0))$ if $\alpha_0=+\infty$, where $\tilde{w}_0$ solves
\begin{equation*}
\begin{cases}
&\Delta \tilde{w}_0=8\exp(-2T_0) \tilde{w}_0 \text{ in }B_0(\alpha_0)\,,\\
&\tilde{w}_0(0)=0\,,\\
&\tilde{w}_0\text{ is radial around }0\in\mathbb{R}^2\,,
\end{cases}
\end{equation*}
still making usual radial identifications, and where $T_0$ is given in \eqref{DefT0}. By standard theory of radial elliptic equation, this implies $\tilde{w}_0\equiv 0$, which contradicts \eqref{LaContradApp} and proves \eqref{LinftyBdApp}. Then, since $\bar{w}_\varepsilon(0)=0$ and by the fundamental theorem of calculus, we get from \eqref{WSharpEstApp} with \eqref{LinftyBdApp} that $\bar{r}_\varepsilon=\rho_\varepsilon$ in \eqref{DefRBarApp}. By the discussion just above \eqref{EstimTauApp}, this concludes the proof of Proposition \ref{ClaimDevBubble}.

 \end{proof}

\begin{bibdiv}
\begin{biblist}

\bib{AdimurthiDruet}{article}{
      author={Adimurthi},
      author={Druet, O.},
       title={Blow-up analysis in dimension 2 and a sharp form of
  {T}rudinger-{M}oser inequality},
        date={2004},
        ISSN={0360-5302},
     journal={Comm. Partial Differential Equations},
      volume={29},
      number={1-2},
       pages={295\ndash 322},
         url={http://dx.doi.org/10.1081/PDE-120028854},
      review={\MR{2038154}},
}

\bib{AdimGrossi}{article}{
      author={Adimurthi},
      author={Grossi, Massimo},
       title={Asymptotic estimates for a two-dimensional problem with
  polynomial nonlinearity},
        date={2004},
        ISSN={0002-9939},
     journal={Proc. Amer. Math. Soc.},
      volume={132},
      number={4},
       pages={1013\ndash 1019},
  url={http://dx.doi.org.docelec.univ-lyon1.fr/10.1090/S0002-9939-03-07301-5},
      review={\MR{2045416}},
}

\bib{CarlesonChang}{article}{
      author={Carleson, Lennart},
      author={Chang, Sun-Yung~A.},
       title={On the existence of an extremal function for an inequality of
  {J}.\ {M}oser},
        date={1986},
        ISSN={0007-4497},
     journal={Bull. Sci. Math. (2)},
      volume={110},
      number={2},
       pages={113\ndash 127},
      review={\MR{878016}},
}

\bib{ChenLi}{article}{
      author={Chen, Wen~Xiong},
      author={Li, Congming},
       title={Classification of solutions of some nonlinear elliptic
  equations},
        date={1991},
        ISSN={0012-7094},
     journal={Duke Math. J.},
      volume={63},
      number={3},
       pages={615\ndash 622},
  url={http://dx.doi.org.docelec.univ-lyon1.fr/10.1215/S0012-7094-91-06325-8},
      review={\MR{1121147}},
}

\bib{deFigLions}{article}{
      author={de~Figueiredo, D.~G.},
      author={Lions, P.-L.},
      author={Nussbaum, R.~D.},
       title={A priori estimates and existence of positive solutions of
  semilinear elliptic equations},
        date={1982},
        ISSN={0021-7824},
     journal={J. Math. Pures Appl. (9)},
      volume={61},
      number={1},
       pages={41\ndash 63},
      review={\MR{664341}},
}

\bib{DeMarchisIanniPacellaSurvey}{article}{
      author={De~Marchis, Francesca},
      author={Ianni, Isabella},
      author={Pacella, Filomena},
       title={Asymptotic analysis for the {L}ane-{E}mden problem in dimension
  two},
        date={2016},
        note={Preprint at arXiv:1602.06919},
}

\bib{DeMarchisIanniPacella}{article}{
      author={De~Marchis, Francesca},
      author={Ianni, Isabella},
      author={Pacella, Filomena},
       title={Asymptotic profile of positive solutions of {L}ane-{E}mden
  problems in dimension two},
        date={2017},
        ISSN={1661-7738},
     journal={J. Fixed Point Theory Appl.},
      volume={19},
      number={1},
       pages={889\ndash 916},
  url={http://dx.doi.org.docelec.univ-lyon1.fr/10.1007/s11784-016-0386-9},
      review={\MR{3625097}},
}

\bib{DruetDuke}{article}{
      author={Druet, O.},
       title={Multibumps analysis in dimension 2: quantification of blow-up
  levels},
        date={2006},
        ISSN={0012-7094},
     journal={Duke Math. J.},
      volume={132},
      number={2},
       pages={217\ndash 269},
         url={http://dx.doi.org/10.1215/S0012-7094-06-13222-2},
      review={\MR{2219258}},
}

\bib{DruThiI}{article}{
      author={Druet, Olivier},
      author={Thizy, Pierre-Damien},
       title={{M}ulti-bumps analysis for {T}rudinger-{M}oser nonlinearities
  {I}-{Q}uantification and location of concentration points},
        date={2017},
       pages={64},
        note={Preprint},
}

\bib{DupThi}{article}{
      author={Dupaigne, Louis},
      author={Thizy, Pierre-Damien},
       title={{S}harp quantization for {L}ane-{E}mden problems in dimension
  two},
        date={2018},
        note={Preprint},
}

\bib{LaneEmdenEspMusPis}{article}{
      author={Esposito, Pierpaolo},
      author={Musso, Monica},
      author={Pistoia, Angela},
       title={Concentrating solutions for a planar elliptic problem involving
  nonlinearities with large exponent},
        date={2006},
        ISSN={0022-0396},
     journal={J. Differential Equations},
      volume={227},
      number={1},
       pages={29\ndash 68},
  url={http://dx.doi.org.docelec.univ-lyon1.fr/10.1016/j.jde.2006.01.023},
      review={\MR{2233953}},
}

\bib{Flucher}{article}{
      author={Flucher, Martin},
       title={Extremal functions for the {T}rudinger-{M}oser inequality in
  {$2$} dimensions},
        date={1992},
        ISSN={0010-2571},
     journal={Comment. Math. Helv.},
      volume={67},
      number={3},
       pages={471\ndash 497},
         url={http://dx.doi.org/10.1007/BF02566514},
      review={\MR{1171306}},
}

\bib{GidasNiNiremb}{article}{
      author={Gidas, B.},
      author={Ni, Wei~Ming},
      author={Nirenberg, L.},
       title={Symmetry and related properties via the maximum principle},
        date={1979},
        ISSN={0010-3616},
     journal={Comm. Math. Phys.},
      volume={68},
      number={3},
       pages={209\ndash 243},
         url={http://projecteuclid.org/euclid.cmp/1103905359},
      review={\MR{544879}},
}

\bib{Gilbarg}{book}{
      author={Gilbarg, David},
      author={Trudinger, Neil~S.},
       title={Elliptic partial differential equations of second order},
      series={Classics in Mathematics},
   publisher={Springer-Verlag},
     address={Berlin},
        date={2001},
        ISBN={3-540-41160-7},
        note={Reprint of the 1998 edition},
      review={\MR{1814364 (2001k:35004)}},
}

\bib{Han}{article}{
      author={Han, Zheng-Chao},
       title={Asymptotic approach to singular solutions for nonlinear elliptic
  equations involving critical {S}obolev exponent},
        date={1991},
        ISSN={0294-1449},
     journal={Ann. Inst. H. Poincar\'e Anal. Non Lin\'eaire},
      volume={8},
      number={2},
       pages={159\ndash 174},
         url={http://dx.doi.org/10.1016/S0294-1449(16)30270-0},
      review={\MR{1096602}},
}

\bib{MalchMartJEMS}{article}{
      author={Malchiodi, Andrea},
      author={Martinazzi, Luca},
       title={Critical points of the {M}oser-{T}rudinger functional on a disk},
        date={2014},
        ISSN={1435-9855},
     journal={J. Eur. Math. Soc. (JEMS)},
      volume={16},
      number={5},
       pages={893\ndash 908},
         url={http://dx.doi.org/10.4171/JEMS/450},
      review={\MR{3210956}},
}

\bib{MartMan}{article}{
      author={Mancini, Gabriele},
      author={Martinazzi, Luca},
       title={The {M}oser-{T}rudinger inequality and its extremals on a disk
  via energy estimates},
        date={2017},
        ISSN={0944-2669},
     journal={Calc. Var. Partial Differential Equations},
      volume={56},
      number={4},
       pages={Art. 94, 26},
         url={http://dx.doi.org/10.1007/s00526-017-1184-y},
      review={\MR{3661018}},
}

\bib{McLeodPeletier}{article}{
      author={McLeod, J.~B.},
      author={Peletier, L.~A.},
       title={Observations on {M}oser's inequality},
        date={1989},
        ISSN={0003-9527},
     journal={Arch. Rational Mech. Anal.},
      volume={106},
      number={3},
       pages={261\ndash 285},
         url={http://dx.doi.org.docelec.univ-lyon1.fr/10.1007/BF00281216},
      review={\MR{981664}},
}

\bib{MoserIneq}{article}{
      author={Moser, J.},
       title={A sharp form of an inequality by {N}. {T}rudinger},
        date={1970/71},
        ISSN={0022-2518},
     journal={Indiana Univ. Math. J.},
      volume={20},
       pages={1077\ndash 1092},
      review={\MR{0301504}},
}

\bib{Pruss}{article}{
      author={Pruss, Alexander~R.},
       title={Nonexistence of maxima for perturbations of some inequalities
  with critical growth},
        date={1996},
        ISSN={0008-4395},
     journal={Canad. Math. Bull.},
      volume={39},
      number={2},
       pages={227\ndash 237},
         url={http://dx.doi.org/10.4153/CMB-1996-029-1},
      review={\MR{1390360}},
}

\bib{RenWei}{article}{
      author={Ren, Xiaofeng},
      author={Wei, Juncheng},
       title={On a two-dimensional elliptic problem with large exponent in
  nonlinearity},
        date={1994},
        ISSN={0002-9947},
     journal={Trans. Amer. Math. Soc.},
      volume={343},
      number={2},
       pages={749\ndash 763},
         url={http://dx.doi.org.docelec.univ-lyon1.fr/10.2307/2154740},
      review={\MR{1232190}},
}

\bib{RenWei2}{article}{
      author={Ren, Xiaofeng},
      author={Wei, Juncheng},
       title={Single-point condensation and least-energy solutions},
        date={1996},
        ISSN={0002-9939},
     journal={Proc. Amer. Math. Soc.},
      volume={124},
      number={1},
       pages={111\ndash 120},
  url={http://dx.doi.org.docelec.univ-lyon1.fr/10.1090/S0002-9939-96-03156-5},
      review={\MR{1301045}},
}

\bib{StruweCrit}{article}{
      author={Struwe, Michael},
       title={Critical points of embeddings of {$H^{1,n}_0$} into {O}rlicz
  spaces},
        date={1988},
        ISSN={0294-1449},
     journal={Ann. Inst. H. Poincar\'e Anal. Non Lin\'eaire},
      volume={5},
      number={5},
       pages={425\ndash 464},
         url={http://www.numdam.org/item?id=AIHPC_1988__5_5_425_0},
      review={\MR{970849}},
}

\end{biblist}
\end{bibdiv}

\end{document}